\theoremstyle{plain}
\newtheorem{theorem}{Theorem}
\newtheorem{proposition}{Proposition}
\newtheorem{lemma}{Lemma}
\newtheorem{conjecture}{Conjecture}
\newtheorem{fact}{Fact}
\newtheorem{corollary}{Corollary}
\newtheorem{problem}{Problem}
\theoremstyle{definition}
\newtheorem{definition}{Definition}
\newtheorem{example}{Example}
\newtheorem{notation}{Notation}
\theoremstyle{remark}
\newtheorem{remark}{Remark}
\newcommand{\vone}{\begin{picture}(30,0)
\put(5,2){\circle{10}}
\put(10,2){\vector(1,0){10}}
\put(25,2){\circle{10}}
\end{picture}
}
\newcommand{\voneA}{\begin{picture}(30,0)
\put(5,2){\circle{10}}
\put(20,2){\vector(-1,0){10}}
\put(25,2){\circle{10}}
\end{picture}
}
\newcommand{\atrba}{~
\,\begin{picture}(10,13)
\put(5,3){\circle{15}}
\put(0,-2){\vector(1,1){10}}
\put(10,-2){\vector(-1,1){10}}
\put(5,10){\vector(0,-1){15}}
\put(2,10){\circle*{3}}
\end{picture}~~~
}
\newcommand{\atrbb}{~
\,\begin{picture}(10,13)
\put(5,3){\circle{15}}
\put(0,-2){\vector(1,1){10}}
\put(10,-2){\vector(-1,1){10}}
\put(5,10){\vector(0,-1){15}}
\put(8,10){\circle*{3}}
\end{picture}~~~
}
\newcommand{\atraa}{~
\,\begin{picture}(10,13)
\put(5,3){\circle{15}}
\put(0,-2){\vector(1,1){10}}
\put(10,-2){\vector(-1,1){10}}
\put(5,-5){\vector(0,1){15}}
\put(12,2){\circle*{3}}
\end{picture}~~~
}
\newcommand{\atrac}{~
\,\begin{picture}(10,13)
\put(5,3){\circle{15}}
\put(0,-2){\vector(1,1){10}}
\put(10,-2){\vector(-1,1){10}}
\put(5,-5){\vector(0,1){15}}
\put(2,10){\circle*{3}}
\end{picture}~~~
}
\newcommand{\atrad}{~
\,\begin{picture}(10,13)
\put(5,3){\circle{15}}
\put(0,-2){\vector(1,1){10}}
\put(10,-2){\vector(-1,1){10}}
\put(5,-5){\vector(0,1){15}}
\put(-2,2){\circle*{3}}
\end{picture}~~~
}
\newcommand{\atrae}{~
\,\begin{picture}(10,13)
\put(5,3){\circle{15}}
\put(0,-2){\vector(1,1){10}}
\put(10,-2){\vector(-1,1){10}}
\put(5,-5){\vector(0,1){15}}
\put(2,-4){\circle*{3}}
\end{picture}~~~
}
\newcommand{\atraf}{~
\,\begin{picture}(10,13)
\put(5,3){\circle{15}}
\put(0,-2){\vector(1,1){10}}
\put(10,-2){\vector(-1,1){10}}
\put(5,-5){\vector(0,1){15}}
\put(8,-4){\circle*{3}}
\end{picture}~~~
}
\newcommand{\ahaa}{~
\begin{picture}(10,13)
\put(5,3){\circle{15}}
\put(2,-4){\vector(0,1){14}}
\put(8,10){\vector(0,-1){14}}
\put(-3,3){\vector(1,0){16}}
\put(11,8){\circle*{3}}
\end{picture}~~~
}
\newcommand{\ahaas}{~
\begin{picture}(10,13)
\put(5,3){\circle{15}}
\put(-1,12){$+$}
\put(2,-4){\vector(0,1){14}}
\put(6,-8){$-$}
\put(8,10){\vector(0,-1){14}}
\put(13,1){$+$}
\put(-3,3){\vector(1,0){16}}
\put(11,8){\circle*{3}}
\end{picture}~~~
}
\newcommand{\ahaam}{~
\begin{picture}(10,13)
\put(5,3){\circle{15}}
\put(-1,12){$+$}
\put(2,-4){\vector(0,1){14}}
\put(6,-8){$-$}
\put(8,10){\vector(0,-1){14}}
\put(13,1){$-$}
\put(-3,3){\vector(1,0){16}}
\put(11,8){\circle*{3}}
\end{picture}~~~
}
\newcommand{\ahab}{~
\begin{picture}(10,13)
\put(5,3){\circle{15}}
\put(2,-4){\vector(0,1){14}}
\put(8,10){\vector(0,-1){14}}
\put(-3,3){\vector(1,0){16}}
\put(6,10){\circle*{3}}
\end{picture}~~~
}
\newcommand{\ahac}{~
\begin{picture}(10,13)
\put(5,3){\circle{15}}
\put(2,-4){\vector(0,1){14}}
\put(8,10){\vector(0,-1){14}}
\put(-3,3){\vector(1,0){16}}
\put(-2,6){\circle*{3}}
\end{picture}~~~
}
\newcommand{\ahad}{~
\begin{picture}(10,13)
\put(5,3){\circle{15}}
\put(2,-4){\vector(0,1){14}}
\put(8,10){\vector(0,-1){14}}
\put(-3,3){\vector(1,0){16}}
\put(-1,-1){\circle*{3}}
\end{picture}~~~
}
\newcommand{\ahae}{~
\begin{picture}(10,13)
\put(5,3){\circle{15}}
\put(2,-4){\vector(0,1){14}}
\put(8,10){\vector(0,-1){14}}
\put(-3,3){\vector(1,0){16}}
\put(5,-4.5){\circle*{3}}
\end{picture}~~~
}
\newcommand{\ahaf}{~
\begin{picture}(10,13)
\put(5,3){\circle{15}}
\put(2,-4){\vector(0,1){14}}
\put(8,10){\vector(0,-1){14}}
\put(-3,3){\vector(1,0){16}}
\put(11,-1){\circle*{3}}
\end{picture}~~~
}
\newcommand{\dxup}{~
\begin{picture}(10,15)
\put(5,11.5){\circle*{3}}
\put(5,4){\circle{15}}
\put(0,-1){\vector(1,1){10}}
\put(10,-1){\vector(-1,1){10}}
\put(8,12){+}
\put(-4,12){+}
\end{picture}~
}
\newcommand{\dxump}{~
\begin{picture}(10,15)
\put(5,11.5){\circle*{3}}
\put(5,4){\circle{15}}
\put(0,-1){\vector(1,1){10}}
\put(10,-1){\vector(-1,1){10}}
\put(8,12){+}
\put(-4,12){-}
\end{picture}~
}
\newcommand{\dxupm}{~
\begin{picture}(10,15)
\put(5,11.5){\circle*{3}}
\put(5,4){\circle{15}}
\put(0,-1){\vector(1,1){10}}
\put(10,-1){\vector(-1,1){10}}
\put(10,12){-}
\put(-4,12){+}
\end{picture}~
}
\newcommand{\dxb}{~
\begin{picture}(10,15)
\put(5,4){\circle{15}}
\put(0,-1){\vector(1,1){10}}
\put(10,-1){\vector(-1,1){10}}
\put(5,-3.5){\circle*{3}}
\end{picture}~
}
\newcommand{\dxbp}{~
\begin{picture}(10,15)
\put(5,-3.5){\circle*{3}}
\put(5,4){\circle{15}}
\put(0,-1){\vector(1,1){10}}
\put(10,-1){\vector(-1,1){10}}
\put(8,12){+}
\put(-4,12){+}
\end{picture}~
}
\newcommand{\dxbmp}{~
\begin{picture}(10,15)
\put(5,-3.5){\circle*{3}}
\put(5,4){\circle{15}}
\put(0,-1){\vector(1,1){10}}
\put(10,-1){\vector(-1,1){10}}
\put(8,12){+}
\put(-4,12){-}
\end{picture}~
}
\newcommand{\dxbpm}{~
\begin{picture}(10,15)
\put(5,-3.5){\circle*{3}}
\put(5,4){\circle{15}}
\put(0,-1){\vector(1,1){10}}
\put(10,-1){\vector(-1,1){10}}
\put(10,12){-}
\put(-4,12){+}
\end{picture}~
}
\newcommand{\dxbm}{~
\begin{picture}(10,15)
\put(5,-3.5){\circle*{3}}
\put(5,4){\circle{15}}
\put(0,-1){\vector(1,1){10}}
\put(10,-1){\vector(-1,1){10}}
\put(10,12){-}
\put(-4,12){-}
\end{picture}~
}
\newcommand{\reduced}{\operatorname{reduced}}
\newcommand{\sign}{\operatorname{sign}}
\newcommand{\cNsum}{ \sum_{\check{n}_{b-1} + 1 \le i \le \check{n}_d
}
}
\newcommand{\cat}{\alpha_i \tilde{x}^*_i}
\newcommand{\caFsum}{\sum_{\check{n}_{b-1} + 1 \le i \le \check{n}_{d}} \alpha_i \tilde{x}^*_i }
\newcommand{\caFsums}{\sum_{\check{n}_{1} + 1 \le i \le \check{n}_{3}} \alpha_i \tilde{x}^*_i }
\newcommand{\cFsum}{\sum_{\check{n}_{b-1} + 1 \le i \le \check{n}_{d}} \alpha_i x^*_i}
\newcommand{\cFsums}{\sum_{\check{n}_{1} + 1 \le i \le \check{n}_{3}} \alpha_i x^*_i}
\newcommand{\scFsum}{\sum_{\check{n}_{b-1} + 1 \le i \le \check{n}_{d}} \alpha_i}
\newcommand{\caFsumIrr}{\sum_{i \in \cirri} \alpha_i \tilde{x}^*_i
}
\newcommand{\caFsumIrrs}{\sum_{i \in \cirris} \alpha_i \tilde{x}^*_i
}
\newcommand{\cFsumIrr}{\sum_{i \in \cirri} \alpha_i x^*_i}
\newcommand{\cFsumIrrs}{\sum_{i \in \cirris} \alpha_i x^*_i}
\newcommand{\caFsumConn}{\sum_{i \in \cconni} \alpha_i \tilde{x}^*_i 
}
\newcommand{\caFsumConns}{\sum_{i \in \cconnis} \alpha_i \tilde{x}^*_i 
}
\newcommand{\cFsumConn}{\sum_{i \in \cconni} \alpha_i x^*_i}
\newcommand{\cFsumConns}{\sum_{i \in \cconnis} \alpha_i x^*_i}
\newcommand{\cineq}{\check{n}_{b-1} + 1 \le i \le \check{n}_{d}}
\newcommand{\cR}{\check{R}}
\newcommand{\cyc}{\operatorname{cyc}}
\newcommand{\rev}{\operatorname{rev}}
\newcommand{\cRep}{\check{R}_{\epsilon_1 \epsilon_2 \epsilon_3 \epsilon_4 \epsilon_5}}
\newcommand{\bax}{~
\begin{picture}(10,12)
\put(5,-4.5){\circle*{3}}
\put(5,3){\circle{15}}
\put(0,-2){\vector(1,1){10}}
\put(10,-2){\vector(-1,1){10}}
\end{picture}~~~
}
\newcommand{\uax}{~
\begin{picture}(10,12)
\put(5,10){\circle*{3}}
\put(5,3){\circle{15}}
\put(0,-2){\vector(1,1){10}}
\put(10,-2){\vector(-1,1){10}}
\end{picture}~~~
}
\newcommand{\atrb}{~
\,\begin{picture}(10,13)
\put(5,3){\circle{15}}
\put(0,-2){\vector(1,1){10}}
\put(10,-2){\vector(-1,1){10}}
\put(5,10){\vector(0,-1){15}}
\end{picture}~~~
}
\newcommand{\aha}{~
\begin{picture}(10,13)
\put(5,3){\circle{15}}
\put(2,-4){\vector(0,1){14}}
\put(8,10){\vector(0,-1){14}}
\put(-3,3){\vector(1,0){16}}
\end{picture}~~~
}
\newcommand{\conn}{\operatorname{Conn}}
\newcommand{\irr}{\operatorname{Irr}}
\newcommand{\cirri}{\check{I}^{(\operatorname{Irr})}_{b, d}}
\newcommand{\cirris}{\check{I}^{(\operatorname{Irr})}_{2, 3}}
\newcommand{\cconni}{\check{I}^{(\operatorname{Conn})}_{b, d}}
\newcommand{\cconnis}{\check{I}^{(\operatorname{Conn})}_{2, 3}}
\newcommand{\sub}{\operatorname{Sub}}
\newcommand{\ii}{{\rm{I}}}
\newcommand{\sss}{{\rm{SI\!I\!I}}}
\newcommand{\www}{{\rm{WI\!I\!I}}}
\newcommand{\s}{{\rm{SI\!I}}}
\newcommand{\w}{{\rm{WI\!I}}}
\begin{document}
\title[GPV conjecture for degree three case]{Goussarov-Polyak-Viro Conjecture for degree three case
}
\thanks{\color{black}{MSC2020: 57K16; 57K12; 57K10}}
\author{Noboru Ito}
\address{Graduate School of Mathematical Sciences, The University of Tokyo, 3-8-1, Komaba, Meguro-ku, Tokyo 153-8914, Japan}
\address{(Current) National Institute of Technology, Ibaraki College, 866 Nakane,  Hitachinaka, Ibaraki, 312-8508, Japan}
\email{nito@gm.ibaraki-ct.ac.jp}
\author{Yuka Kotorii}
\address{
Mathematics Program, Graduate school of Advanced Science and Engineering, Hiroshima University
\\
1-7-1 Kagamiyama Higashi-hiroshima City, Hiroshima 739-8521, Japan 
}
\address{Mathematical Analysis Team, RIKEN Center for Advanced Intelligence Project (AIP) 
\\
1-4-1 Nihonbashi, Chuo-ku, Tokyo 103-0027, Japan}
\address{interdisciplinary Theoretical \& Mathematical Sciences Program (iTHEMS) RIKEN
\\
 2-1, Hirosawa, Wako, Saitama 351-0198, Japan}
\email{kotorii@hiroshima-u.ac.jp}
\author{Masashi Takamura}
\address{School of Social Informatics, Aoyama Gakuin University,
5-10-1 Fuchinobe Chuo-ku, Sagamihara-shi, Kanagawa-ken 252-5258, Japan}
\email{takamura@si.aoyama.ac.jp}
\keywords{knot; virtual knot; finite type invariant; Vassiliev knot invariant; Gauss diagram formula}
\date{August 20, 2023}
\maketitle

\begin{abstract}
Although it is known that the dimension of the Vassiliev invariants of degree three of long virtual knots is seven, the complete list of seven distinct Gauss diagram formulas have been unknown explicitly, where only one known  formula was revised without proof.     
In this paper, we give seven Gauss diagram formulas to present the seven invariants of the degree three    (Proposition~\ref{thm1}).  We further give $23$ Gauss diagram formulas of classical knots  (Proposition~\ref{thm2}). 
In particular, the Polyak-Viro Gauss diagram formula \cite{PV} is not a long virtual knot invariant; however, it is included in the list of $23$ formulas.  It has been unknown whether this formula would be available by arrow diagram calculus automatically.       
In consequence, as it relates to the conjecture of Goussarov-Polyak-Viro \cite[Conjecture~3.C]{gpv}, for all the degree three finite type long virtual knot invariants, each Gauss diagram formula is represented as those  of Vassiliev invariants of classical knots (Theorem~\ref{thm3}).  
\end{abstract}

\section{Introduction}\label{intro}
In 1990, Vassiliev introduced a powerful family of filtered classical knot invariants \cite{V}, called finite type invariants or Vassiliev knot invariants.   Nowadays, since Vassiliev knot invariants have been extended to general or other objects, e.g. braids, (long) virtual knots, or $3$-manifolds.   Throughout of this paper, by ``Vassiliev knot invariants", we mean classical knot invariants of finite type as follows.      
A knot invariant taking values in an abelian group is said to be \emph{finite type of degree at most $n$} if for any singular knot with $n+1$ singular crossings, its value of the linear extended invariant vanishes, where each singular crossing is defined as follows
\begin{equation*}
\begin{picture}(0,20)
\put(-30,-6){\includegraphics[width=4cm]{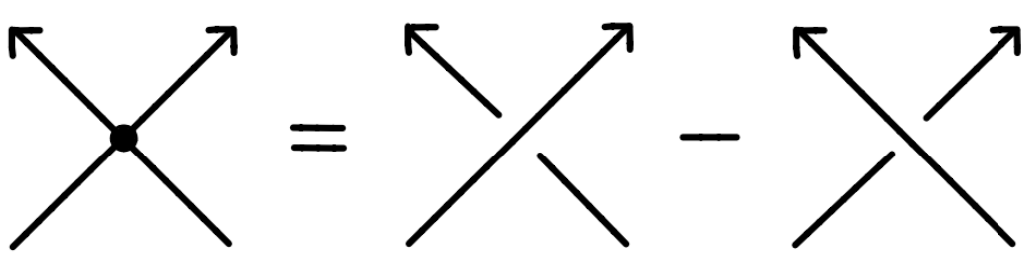}\quad.}
\end{picture}
\end{equation*}

A remarkable relationship between the Vassiliev knot invariants and the Jones polynomial is known: each coefficient of power series obtained from the Jones polynomial by replacing its variable by $e^x$ is a Vassiliev knot invariant \cite[Theorem, Page~227]{BL}.  A famous open problem concerning Vassiliev knot invariants is whether they distinguish nonisotopic knots (the Vassiliev Conjecture \cite[Page~46]{PS}).   

For the theory of Vassiliev knot invariants, in order to present a Vassiliev knot invariant by a combinatorial formula,
Polyak and Viro introduced a Gauss diagram formula in \cite{PV}   
(for the definition of Gauss diagram formulas, see Definition~\ref{remarkGauss}).
Polyak and Viro \cite[Page~450, Section~6]{PV} had a question: ``{\it Can any Vassiliev invariant be calculated as a function of arrow polynomials evaluated on the knot diagram?}"      Here, ``a function of arrow polynomials" (``Vassiliev invariant",~resp.) means a Gauss diagram formula (Vassiliev knot  invariant,~resp.).    
Goussarov gave a positive answer to this question in a framework of a (long) virtual knot theory \cite[Theorem~3.A]{gpv}.  In fact, in  \cite{gpv}, they introduce a definition of finite type invariants;  a (long) virtual knot invariant taking values in an abelian group is said to be \emph{Goussarov-Polyak-Viro finite type of degree at most $n$} if the value of the linear extended invariant vanishes for any (long) semi-virtual knot with $n+1$ semi-virtual crossings, where each semi-virtual crossing is defined as follows \cite{gpv}.
\begin{equation*}
\begin{picture}(0,20)
\put(-30,-6){\includegraphics[width=4cm]{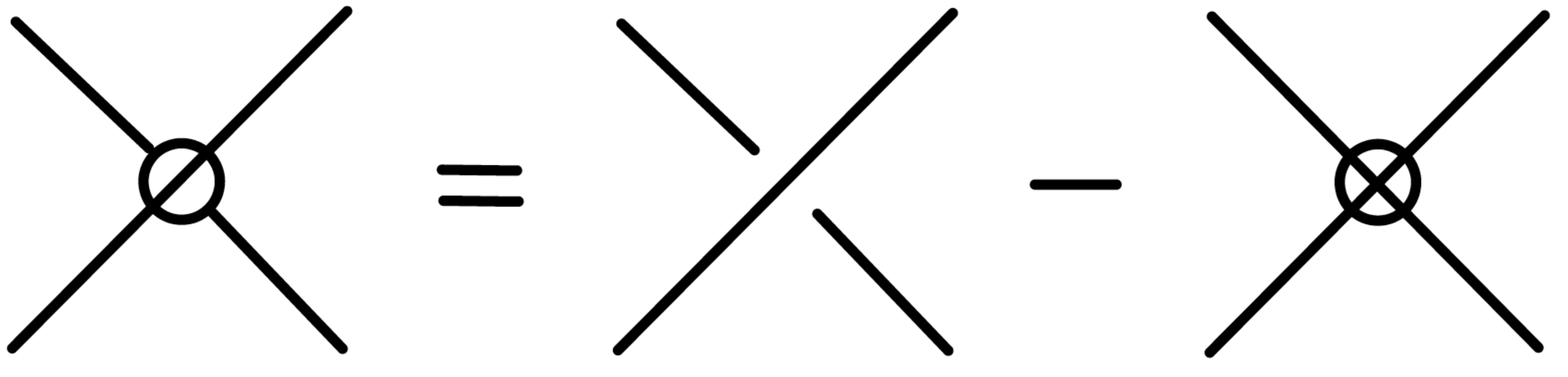}.}
\end{picture}
\end{equation*}
Then Goussarov, Polyak, and Viro formulated Conjecture~\ref{gpv_conj}\footnote{Polyak \cite{P2} wrote that ``An open (an highly non-trivial) conjecture...".  } 
\cite[Conjecture~3.C]{gpv}.     

\begin{conjecture}[{\cite[Conjecture~3.C]{gpv}}]\label{gpv_conj}
Every finite type invariant of classical knots can be extended to a 
finite type invariant of long virtual knots.  
\end{conjecture}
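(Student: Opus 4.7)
The plan is to prove Conjecture~\ref{gpv_conj} by producing, for every classical Vassiliev invariant $v$ of degree at most $n$, an explicit Gauss diagram formula $G_v$ with the property that (i) the evaluation of $G_v$ on any classical Gauss diagram agrees with $v$, and (ii) $G_v$ is tautologically a GPV finite type invariant of long virtual knots of degree at most $n$. Property (ii) is the free direction: a Gauss diagram formula based on arrow diagrams having at most $n$ arrows vanishes on any long semi-virtual knot carrying more than $n$ semi-virtual crossings, because each subdiagram-count picks up a factor $(\bullet - 0)$ at each semi-virtual crossing and there are not enough arrows to absorb them. So the whole conjecture is reduced to showing that every classical Vassiliev invariant is representable by some Gauss diagram formula.

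To attack representability, I would set up the graded vector space comparison in each degree $n$. Let $V_n$ be the space of classical Vassiliev invariants of degree at most $n$ modulo degree at most $n-1$, and let $V_n^{\mathrm{GPV}}$ be the analogous quotient for GPV invariants of long virtual knots. Restriction of a long virtual invariant to classical long knots produces a linear map $V_n^{\mathrm{GPV}} \to V_n$ that preserves degree (the easy direction, since a classical singular knot is in particular a virtual semi-virtual knot). The conjecture is equivalent to surjectivity of this restriction. The plan is to identify the dual $(V_n^{\mathrm{GPV}})^{*}$ with the quotient $\mathcal{G}_n$ of the free module on long Gauss diagrams with $n$ arrows by the relations imposed by the generalized Reidemeister moves, identify $V_n^{*}$ with the classical Polyak--Viro algebra $\mathcal{A}_n^{\mathrm{cl}}$ of chord diagrams modulo one-term and four-term relations, and then construct an explicit surjection $\mathcal{A}_n^{\mathrm{cl}} \twoheadrightarrow \mathcal{G}_n$.

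The construction of the surjection would mimic a combinatorial Kontsevich integral at the Gauss diagram level: each unoriented chord in a chord diagram is replaced by the sum over its two orientations with appropriate signs, and each resulting arrow diagram is equipped with all possible sign decorations on its arrows. One then checks that the $4T$-relations and one-term relation among chord diagrams are sent into the kernel consisting of combinations of Reidemeister II and Reidemeister III defining relations on arrow diagrams. Once this is established, composing with a combinatorial realization map that assigns to each arrow diagram the Gauss diagram formula counting its appearances yields the required preimage of $v$.

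The main obstacle, and the one that separates the general conjecture from the degree three special case handled in the present paper, is the third step: verifying that the $4T$-relations translate correctly into Reidemeister relations for arrow diagrams of arbitrary complexity. For small $n$ this is a finite case check, carried out in this paper for $n = 3$ by enumerating all arrow diagrams with at most three arrows and matching them against the relations coming from the moves \textrm{I}, \textrm{II}, and \textrm{III}. For general $n$ one must argue systematically that every new Reidemeister relation that appears at level $n$ is either a consequence of lower-level relations plus $4T$, or can be accommodated by adjusting the lower-order terms of $G_v$; controlling this recursion in closed form is the serious combinatorial difficulty that keeps the full conjecture open and motivates restricting the present paper to $n=3$.
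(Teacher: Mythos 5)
This statement is a conjecture, and the paper does not prove it in general; it only settles the degree three case (Theorem~\ref{thm3}, via Propositions~\ref{thm1} and~\ref{thm2}). Your proposal likewise is not a proof: you yourself concede at the end that the crucial step --- showing for arbitrary $n$ that the $4T$ and one-term relations are carried into the Reidemeister relations on arrow diagrams, so that the restriction map on degree-$n$ invariants is surjective --- is exactly the open combinatorial difficulty. A strategy whose central step is acknowledged to be unresolved is a research plan, not a proof, so the conjecture remains exactly as open under your proposal as before. There is also a concrete error in the dualization: surjectivity of the restriction $V_n^{\mathrm{GPV}} \to V_n$ dualizes to \emph{injectivity} of the induced map $\mathcal{A}_n^{\mathrm{cl}} \to \mathcal{G}_n$ of symbol spaces (or, to get an actual extension, to a splitting of the natural projection the other way), not to a surjection $\mathcal{A}_n^{\mathrm{cl}} \twoheadrightarrow \mathcal{G}_n$. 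Indeed a surjection is impossible already at $n=3$: by Table~\ref{tableSum} the classical space is $1$-dimensional while the long virtual space is $7$-dimensional.

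For comparison, the paper's treatment of the one case it does prove does not pass through chord diagrams modulo $1T$ and $4T$ at all. It works entirely with signed arrow diagrams and relators modeled on a minimal generating set of Reidemeister moves (Fact~\ref{prop_move}), and the decisive new ingredient is the \emph{reduced} Polyak algebra of Subsection~\ref{redSec}: strong $\mathcal{RI\!I\!I}$ relators are imposed only in mirroring pairs $r^*+\hat{r}^*$, which is legitimate for classical (but not virtual) diagrams by the linking-number argument of Lemma~\ref{lem1}. The invariants are then found by explicitly solving ${\bf{x}}M={\bf{0}}$ over the $48$ arrow diagrams of Notation~\ref{order} against the $84$ (resp.\ $54$) relators, and the bridge to the Polyak--Viro formula is the identity $v_{3,16}+v_{3,18}=\bigl\langle 2\atrb+\aha,\ \cdot\ \bigr\rangle$. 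If you want to salvage your outline, the honest target is the degree three statement, and there your ``finite case check'' would have to be carried out in the paper's reduced framework rather than via a hypothetical map from $\mathcal{A}_n^{\mathrm{cl}}$.
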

{\color{black}{
First, by \cite{gpv}, it is known that for degrees at most 3, Conjecture~\ref{gpv_conj} is true.  
Here, note that \cite[Section~3.2]{gpv} contains a typo, a Gauss diagram formula of degree $3$ of long virtual knots, it is not a classical knot invariant (see Fig.~\ref{counterEg} and Remark~\ref{rmkGPV}).
}}

\begin{figure}[h!]
\includegraphics[width=8cm]{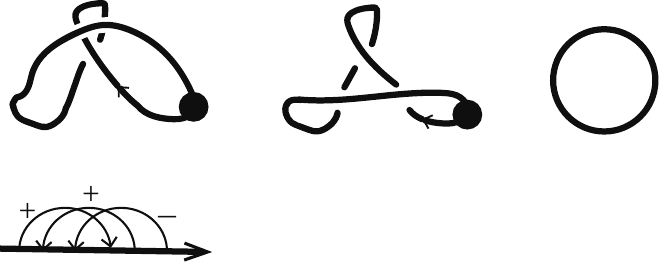}
\caption{A diagram (upper line) of the unknot and its Gauss diagram (lower line) showing that the formula in {\cite[The formula in the end of Section~3.2]{gpv}} is not invariant under Reidemeister moves.  
{\cite[The formula in the end of Section~3.2]{gpv}} takes values $-3$   whereas the value should be $0$ for the unknot.  Each of the terms of $\atraf$, $\dxbpm$, and $\dxbmp$ in their invariant \cite{gpv} takes $-1$.}
\label{counterEg}
\end{figure}     

%
{\color{black}{
The first author discussed Viro about the conjecture and was suggested the following problems related to the conjecture.
\begin{problem}[\cite{viroPrivate}]\label{viroprivate}
$($1$)$ Clarify a relationship between Gauss diagram formulas as long virtual knot invariants and Gauss diagram formulas as classical knot invariants (e.g., Polyak-Viro formula for the degree 3).  \\
$($2$)$ How to merge Gauss diagram formulas as long virtual knot invariants into Gauss diagram formulas as classical knot invariants (e.g., Polyak-Viro formula for the degree 3)? \\
$($3$)$ How to transform Gauss diagram formulas as classical knot invariants into Gauss diagram formulas as long virtual knot invariants?
\end{problem}
We give one solution of them (Theorem~\ref{thm3} and Corollary~\ref{PVformula}) {\color{black}{of degree three}} by giving new Gauss diagram formulas for classical long knots (Proposition~\ref{thm2}).

\begin{theorem}\label{thm3}
Let the orders of $168$ arrow diagrams be as in Notation~\ref{order}.  Let $v_i$ $(1 \le i \le 23)$ $(\tilde{v}_j$ $(1 \le j \le 9)$,~resp.$)$ be as in Proposition~\ref{thm2} $($Proposition~~\ref{thm1},~resp.$)$.  
If $\bf{v}$ is the $23 \times 168$ matrix ${}^t \, (v_{3, 1}, v_{3, 2}, \ldots, v_{3, 21}, v_{2, 1}, v_{2, 2})$ and $\bf{w}$ is the $9 \times 168$ matrix ${}^t \, (\tilde
{v}_{3, 1}, \tilde
{v}_{3, 2}, \ldots, \tilde
{v}_{3, 7}, \tilde
{v}_{2, 1}, \tilde
{v}_{2, 2})$ consisting of the coefficients,    
there exists a unique $9 \times 23$ matrix $A$ such that 
$ A \cdot {\bf{v}} = {\bf{w}}. $
\end{theorem}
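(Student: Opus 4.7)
The plan is to treat Theorem~\ref{thm3} as a pair of linear-algebraic assertions about the matrices $\mathbf{v}$ and $\mathbf{w}$ over the $48$-dimensional coordinate space indexed by the arrow diagrams in Notation~\ref{order}, followed by an explicit identification on the classical locus. Propositions~\ref{thm1} and~\ref{thm2} already supply all entries of $\mathbf{v}$ and $\mathbf{w}$ as integer coefficients, so no further enumeration of Gauss diagrams is required; what remains is a matrix calculation and a check restricted to classical knots.

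First I would establish existence of $A$: every row of $\mathbf{w}$ must lie in the row span of $\mathbf{v}$. Each $\tilde{v}_{3,j}$ from Proposition~\ref{thm1} is, by construction, a degree-three GPV invariant on long virtual knots, and the family $\{v_i\}_{i=1}^{21}$ from Proposition~\ref{thm2} is (by its design) chosen to span the ambient space of such invariants truncated to the 48 chosen arrow diagrams. One then solves $A \cdot \mathbf{v} = \mathbf{w}$ over $\mathbb{Q}$ coordinate by coordinate: each column of $A$ is recovered by Gaussian elimination, and the consistency of the system is exactly the statement that every $\tilde{v}_{3,j}$ admits an expansion in the $v_i$.

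Next I would verify uniqueness of $A$, which reduces to showing $\operatorname{rank} \mathbf{v} = 21$. The cleanest way is to exhibit an explicit invertible $21 \times 21$ minor of $\mathbf{v}$ --- in practice, by selecting, for each $v_i$, a distinguished arrow diagram in Notation~\ref{order} on which $v_i$ is uniquely supported (after permutation), producing a (nearly) upper-triangular submatrix with nonzero diagonal. Independence of the rows is equivalent to the nondegeneracy of the pairing between the chosen 21 virtual formulas and the 48 arrow diagrams, and once shown it forces uniqueness of $A$.

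Finally, to obtain the identification $v_{3,16} + v_{3,18} = \Bigl< 2\,\atrb + \aha ,\ \cdot\ \Bigl>$ on classical knots, I would restrict both sides to Gauss diagrams with no semi-virtual crossings. On this locus most of the 48 arrow diagrams either fail to appear as subdiagrams of classical Gauss diagrams or cancel against their mirror/symmetric partners, so the sum $v_{3,16} + v_{3,18}$ collapses to its contributions on $\atrb$ and $\aha$; a direct computation of these residual coefficients recovers exactly $2$ and $1$, matching Polyak-Viro. The main obstacle is the bookkeeping: keeping the 48 diagram orderings consistent through the rank calculation and the classical reduction is the one place where an indexing or sign error could invalidate the argument, so the proof hinges on mechanical but careful verification against Propositions~\ref{thm1} and~\ref{thm2}.
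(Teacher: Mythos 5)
Your treatment of the existence and uniqueness of $A$ is in substance the same as the paper's: both reduce to the computation $\operatorname{rank}\mathbf{v}=21$, after which $A$ is pinned down (the paper writes $A=\mathbf{w}\,{}^{t}\mathbf{v}(\mathbf{v}\,{}^{t}\mathbf{v})^{-1}$ using the right inverse of $\mathbf{v}$ rather than running Gaussian elimination, but that is cosmetic). The genuine gap is in the final identification $v_{3,16}+v_{3,18}=\langle 2\atrb+\aha,\ \cdot\ \rangle$. The mechanism you describe --- that most of the $48$ arrow diagrams ``fail to appear as subdiagrams of classical Gauss diagrams or cancel against their mirror/symmetric partners'' --- is not what happens and would not yield the identity: $v_{3,16}$ and $v_{3,18}$ involve only seven based arrow diagrams to begin with ($\atrba$, $\atrbb$, $\ahaa$, $\ahab$, $\ahac$, $\ahad$, $\ahaf$), so there is nothing of that sort to collapse, and no arrow diagram is excluded from occurring in a classical Gauss diagram. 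Moreover, restricting to classical knots has nothing to do with ``Gauss diagrams with no semi-virtual crossings''; semi-virtual crossings are a formal device in the definition of finite type and never occur in the Gauss diagram of an actual diagram.

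The missing idea is the conversion between \emph{based} and \emph{unbased} counting. The Polyak--Viro formula is written in unbased arrow diagrams, while $v_{3,16}$ and $v_{3,18}$ are written in based ones, so the two sides are a priori formulas in different kinds of diagrams and cannot be compared directly. The bridge, which is the entire content of this step in the paper, is that a classical knot with a base point is a long knot, and the count of an unbased arrow diagram equals the sum of the counts of all its based versions, one for each arc on which the base point can sit: $\atrb=\atrba+\atrbb$ and $\aha=\ahaa+\ahab+\ahac+\ahad+\ahae+\ahaf$. Adding the two formulas gives coefficient $2$ on $\atrba$ and $\atrbb$, coefficient $2$ on $\ahab$, coefficient $0$ on $\ahae$, and coefficient $1$ on the remaining four based versions of $\aha$; this differs from $2\atrb+\aha$ exactly by $\ahab-\ahae$, which vanishes on classical knots because $v_{3,17}=\langle -\ahab+\ahae,\ \cdot\ \rangle=0$ by Proposition~\ref{thm2}. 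Without the base-point decomposition and this last cancellation the claimed equality does not follow.
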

 }}

%

Moreover, as a corollary of Proposition~\ref{thm2}, we give a process that converts a Gauss diagram formula of long virtual knots into   
a well-known Polyak-Viro formula \cite[Theorem~2]{PV}   $\left\langle  2 \atrb + \aha, \ \cdot \right\rangle$, which takes value $0$ on the unknot, $+2$ on the right trefoil and $-2$ on the left trefoil.  This Polyak-Viro formula is of degree three and a knot invariant of classical knots and is not invariant of (long) virtual knots (Remark~\ref{rmk_triv}).    

{\color{black}{
\begin{corollary}\label{PVformula}
For $v_{3, 7}$ in the list of Proposition~\ref{thm2}, the formula $v_{3, 7} (\cdot)$ $=$ $\left\langle 2 \atrb + \aha, \ \cdot \right\rangle$ holds.    
\end{corollary} 
}}

The proof of Theorem~\ref{thm3} {\color{black}{ is given in  Section~\ref{PrTh3} and Corollary~\ref{PVformula} is directly implied by Proposition~\ref{thm2}. }} 
Note that Theorem~\ref{thm3} {\color{black}{ and Corollary~\ref{PVformula}}} with Propositions~\ref{thm1} and \ref{thm2} imply that not only the degree three finite type \emph{long virtual} knot   invariants give rise to the same degree finite type \emph{classical}  knot invariant by just taking the restrictions  (as in Proposition~\ref{thm1} of Section~\ref{proofThm1}), but also the Polyak-Viro formula $\Biggl< 2 \atrb +  \aha, \ \cdot \ \Biggl>$ is equals  $v_{3, 7}$ and {\color{black}{ every degree three finite type long virtual knot invariant $\tilde{v}_{3, i}$ ($1 \le i \le 7$) is  combinatorially represented as linear sums of $v_{3, i}$ ($1 \le i \le 21$) and $v_{2, i}$ ($1 \le i \le 2$).  }}

In the end of this section, using Table~\ref{tableSum}, we compare known results with our result (Remarks~\ref{CompareB}--\ref{rmk_triv}).  Here, we identify a (virtual) knot having a base point with a long (virtual) knot.    
\begin{table}
\caption{Known explicit Gauss diagram formulas, dimensions of finite type invariants,  and our Gauss diagram formulas (Propositions~\ref{thm1} and \ref{thm2}).  Proposition~\ref{thm2} is obtained from a quotient of a (modified) Polyak algebra (Section~\ref{redSec})}\label{tableSum}
\renewcommand{\arraystretch}{1.5}
\noindent
\begin{tabular}{|c|c|l|}\hline
 & deg 2 & \multicolumn{1}{c|}{deg 3}   \\ \hline
classical knot&  $\langle \uax, \cdot \rangle$  & Polyak-Viro formula~\ $\langle {2 \atrb + \aha}, \cdot \rangle$ \cite{PV},  which  \\
&$=\langle \bax, \cdot \rangle$& 
is not invariant of virtual knots (Remark~\ref{rmk_triv}), 
 \\
&\cite{PV}& Willerton formula for classical knots \cite{Willerton}, \\  
&& Chmutov-Polyak formula  for classical knots  \cite{CP} \\ 
&& (i.e. \emph{three Gauss diagram formulas were  known}). \\
$\dim$ & $1$ & \multicolumn{1}{c|}{$1$} \\ \cline{2-3}
 &$v_{2, 1}=v_{2, 2}$& 16 (3,~resp.) Gauss diagram formulas, which are \\
Formulas in&$=\langle \uax, \cdot \rangle$& nontrivial (trivial,~resp.) for classical knots  \\ 
Proposition~\ref{thm2}&$=\langle \bax, \cdot \rangle$& and induces $v_{3, 9}=$ $\langle {\aha  +  2 \atrb}, \cdot \rangle$.  \\ \hline
virtual knot& none \cite{gpv} & Goussarov-Polyak-Viro formula \cite{gpv}, which is \\
&&  trivial for classical knots  \\  
&& (i.e. \emph{only 1 Gauss diagram formula was known}). \\
$\dim$ & $0$ \cite{gpv, Bar} & \multicolumn{1}{c|}{$1$ \cite{Bar}}  \\ \hline
long virtual knot & $\langle \uax, \cdot \rangle$, &  Goussarov-Polyak-Viro formula \cite{gpv, CDM, EHLN} (Remark~\ref{rmkGPV}) \\  
&$\langle \bax, \cdot \rangle$ \cite{gpv} & (i.e. \emph{only 1 Gauss diagram formula was known}). \\
$\dim$ & {\color{black}{$2$}} \cite{Bar} & \multicolumn{1}{c|}{$7$ \cite{Bar}}  \\ \cline{2-3}
Seven formulas  && 4 (3,~resp.) Gauss diagram formula(s), which   \\
in Proposition~\ref{thm1} &&are nontrivial (trivial,~resp.) for classical knots \\ \hline
\end{tabular}
\end{table}
   
\begin{remark}\label{CompareB}
By Proposition~\ref{thm1}, we essentially obtain six new invariants of long virtual knots since a known Goussarov-Polyak-Viro finite type invariant of degree three is only one \cite{EHLN}, which is a revised version of \cite{gpv} having typos (see Remark~\ref{rmkGPV}); this known invariant is represented by a linear combination of $\tilde{v}_{3,i}$ $(1 \le i \le 7)$.  
For degree $n$ $=$ $1, 2, \dots, 6$, D.~Bar-Natan, I.~Halacheva, L.~Leung, and F.~Roukema \cite{Bar} computed the dimension of the space of Goussarov-Polyak-Viro finite type invariants.  
In particular, Proposition~\ref{thm1} corresponds to their result that the dimension of the space of Goussarov-Polyak-Viro finite type invariants of degree three is seven  \cite{Bar}.
\end{remark}
\begin{remark}\label{rmkGPV}
Goussarov, Polyak, and Viro gave an example of Goussarov-Polyak-Viro finite type invariants of degree three for long virtual knots by a Gauss diagram formula, and they mentioned that the restriction of this invariant to classical knots was a Vassiliev knot  invariant of the same degree \cite[Section~3.2, Page
~1059]{gpv}.
However, this formula contains a typo.  
In fact, it is not invariant 
under a third Reidemeister move for the unknot {\color{black}{(Fig.~\ref{counterEg})}}.  A correction is given by Chmutov-Duzhin-Mostovoy \cite{CDM} for the typo.  This correction is different from another correction  of Zohar-Hass-Linial-Nowik \cite{EHLN}.   Although, unfortunately, both corrections of \cite{CDM, EHLN} are given without proofs, we have checked 
both formulas by our computation.  See Section~\ref{Tip}.     
Further, for classical knots, we should mention that there are three other works on Vassiliev knot invariants of degree three by Willerton \cite{Willerton}, Fiedler-Stoimenow \cite{FS}, and Chmutov-Polyak \cite{CP}.     
\end{remark}
\begin{remark}\label{rmk_triv}
The Polyak-Viro formula $\Biggl<  2 \atrb + \aha, \ \cdot \ \Biggl>$ is not invariant of virtual knots.  For example, it is elementary to choose a diagram having exactly $3$ real crossings and exactly one virtual crossing of a virtual trefoil (Fig.~\ref{VT}, left).  This diagram takes value $-1$ on $\langle \aha, \cdot \rangle$ and $0$ on $\langle \atrb, \cdot \rangle$.  On the other hand, the virtual knot diagram of the virtual trefoil is changed into another diagram having exactly two real crossings and one virtual crossing (Fig.~\ref{VT}, right).  Then,  
 it takes value $0$ on $\Biggl< 2 \atrb + \aha, \ \cdot \ \Biggl>$. 
It implies that it is not invariant for the virtual trefoil.  
\end{remark}
\begin{figure}[h!]
\includegraphics[width=8cm]{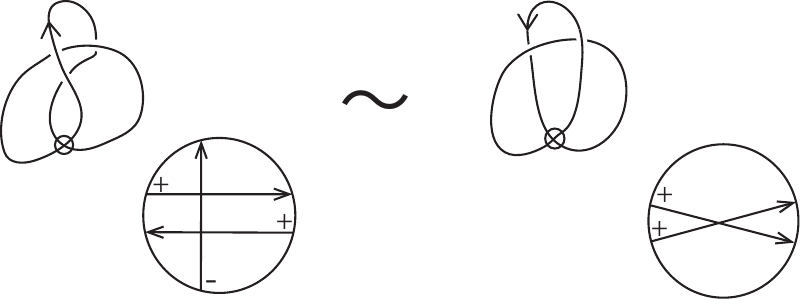}
\caption{A virtual trefoil diagram (left) taking $-1$ on Polyak-Viro formula and the other (right) taking $0$ with their arrow diagrams}\label{VT}
\end{figure}

\emph{Plan of the paper.} 
For readers, we explain the plan of this paper.  In Section~\ref{sec2}, we give definitions of arrow diagrams and their equivalence.  
If the reader is familiar with Gauss diagrams, a space ($\mathbb{Z}$-module or $\mathbb{Q}$-vector space) of them, and its dual space, in  
Section~\ref{sec2}, s/he checks notations only and skip over Definitions~\ref{dfn2_arrow}--\ref{tilde_ll}.   
Definitions~\ref{def_relators_arrow}--\ref{remarkGauss} are needed for a general treatment of Gauss diagram formulas.  Roughly speaking, we  select not only a traditional tuple of relators in \cite{gpv} but also another choice of  relators to produce Gauss diagram formulas that are (long) virtual/classical knot invariants.      
In Section~\ref{proofInvariant_virtual}, we explain our process to obtain Gauss diagram formulas by giving a proof (if the reader is familiar with \cite{Ito_sp}, s/he is at an advantage; however, this paper is a self-contained treatment of the construction of invariants).  
By Section~\ref{redSec}, we introduce a (really) new framework to give Gauss diagram formulas of classical knots using the linking number that is  a Vassiliev-type link invariant.   
In Section~\ref{SecNotation}, we prepare notations to fix orders of Gauss diagrams and relators.  In Section~\ref{proofThm1}, we provide Gauss diagram formulas of long virtual knots, which implies that  a positive answer to the degree three case of Conjecture~\ref{gpv_conj} and in  
Section~\ref{proofThm2}, we give Gauss diagram formulas of classical knots.   
In Section~\ref{PrTh3}, we prove the main result (Theorem~\ref{thm3}) that is a special answer to the degree three case of  Problem~\ref{viroprivate} to solve   Conjecture~\ref{gpv_conj} in a deeper sense.    

\section{Preliminaries}\label{sec2}
In this section, definitions and  notations are based on \cite{Ito}.    
\begin{definition}[arrow diagram \cite{gpv}]\label{dfn2_arrow}  
An \emph{arrow diagram} is a configuration of $n$ pair(s) of points up to ambient isotopy on an oriented circle where each pair of points consists of a starting point and an end point and where the circle has the standard (counterclockwise) orientation and  each pair has a sign.         
If an arrow diagram has a base point which is on the circle and which does not coincide with one of the paired points, the arrow diagram is also called a \emph{based arrow diagram}.  If there is no confusion, a based arrow diagram is simply called an \emph{arrow diagram}.   
If an arrow diagram is a configuration of $n$ pair(s) of points, the integer $n$ is called the {\it{length}} of the arrow diagram.   
Traditionally, two points of each pair are connected by a (straight) arc, and an assignment of starting and end points on the boundary points of an arc is represented by an arrow on the arc from the starting point to the end point.  The arc is called an \emph{arrow}. 
\end{definition}
\begin{remark}[a terminological remark]\label{remark:terminology}
In this paper, we use a simple notation called an \emph{unsigned arrow diagram} $x$ that is the sum of over all ways to assign signs to the arrows, 
 i.e.,  
\[
x := \sum_{x^* : x~{\text{with~signs}} } x^*.   
\]
For example, 
\[
 \dxb =  \dxbp + \dxbmp + \dxbpm + \dxbm.
\] 
The definition is the same as that of \cite{gpv} essentially.  
\end{remark}
\begin{definition}[Reidemeister moves on arrow diagrams \cite{gpv, ostlund}]
\begin{figure}[h]
\includegraphics[width=4cm]{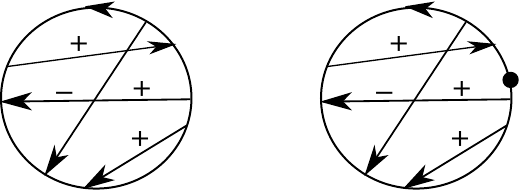}
    \caption{An arrow diagram and a based arrow diagram}
    \label{gaussdiagram}
\end{figure}

{Reidemeister moves} on arrow diagrams are the following three moves as in Fig.~\ref{Reidemeistermoves}. 
If we treat based arrow diagrams, we suppose that there exists a base point on a dashed arc of the circle in Fig.~\ref{Reidemeistermoves}.  \begin{figure}[h]
  \begin{center}
\includegraphics[width=.6\linewidth]{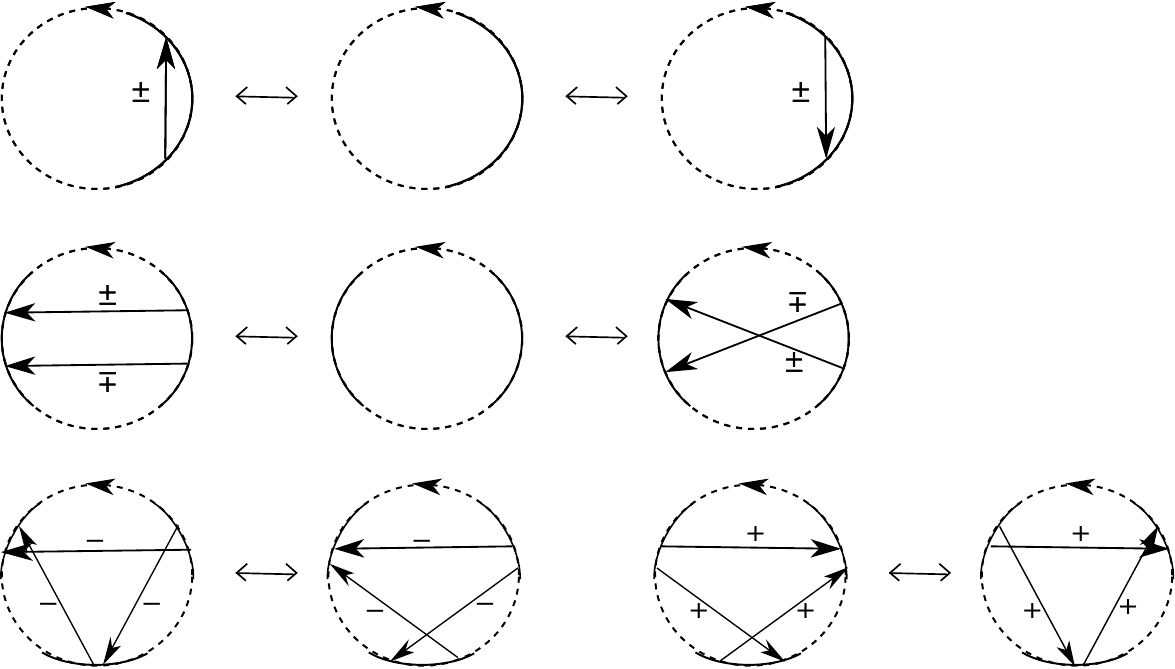}
    \caption{The Reidemeister moves}
    \label{Reidemeistermoves}
  \end{center}
\end{figure}

\end{definition}
\begin{definition}[an arrow diagram of a knot \cite{gpv}]
Let $K$ be a knot and let $D_K$ be a knot diagram of $K$.  Then, there exists a generic immersion $g: S^1 \to \mathbb{R}^2$ such that $g(S^1)=D_K$ which is enhanced by information on the overpass and the underpass at each double point.  We define a based arrow diagram of $D_K$ as follows (e.g. Fig.~\ref{02}).
\begin{figure}[h!]
\includegraphics[width=8cm]{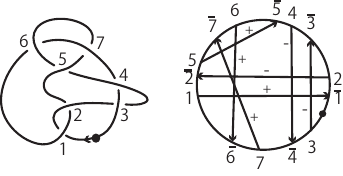}
\caption{A knot diagram (left) and the based arrow diagram (right)}\label{02}
\end{figure}
Let $k$ be the number of the crossings of $D_K$.  Fix a base point, which is not a crossing, and choose an orientation of $D_K$.  We start from the base point and proceed along $D_K$ according to the orientation of $D_K$.  We assign $1$ to the first crossing that we encounter.  Then we assign $2$ to the next crossing that we encounter provided it is not the first crossing.    We suppose that we have already assigned $1$, $2$, \ldots, $p$.   Then we assign $p+1$ to the next crossing that we encounter provided it has not been assigned yet.  Following the same procedure, we finally label all the crossings.  Note that $g^{-1}(i)$ consists of two points on $S^1$ and we shall assign $i$ to one point corresponding to the over path and assign $\bar{i}$ to another point corresponding to the under path.   We shall assign a plus (minus, resp.) to each $g^{-1}(i)$ if the crossing labeled by $i$ is positive (negative, resp.).  
Note also that the preimage of the base point is a point on $S^1$.  
The based arrow diagram represented by the preimage of the base point, $g^{-1}(1)$, $g^{-1}(2)$, \ldots, $g^{-1}(k)$ on $S^1$, equipped with information of the sign of each crossing, is denoted by $AD_{D_K}$, and is called a \emph{based arrow diagram of the knot} $K$.   Here, by ignoring the base point, we have an arrow diagram, which is also called an \emph{arrow diagram of the knot} $K$.   
\end{definition}
It is known that a (\emph{long}) \emph{virtual knot} is identified with the equivalence class of a (based) arrow diagram by the  Reidemeister moves.    
In this paper, we use the terminology of an arrow diagram for either an arrow diagram or a based arrow diagram unless otherwise denoted.  
\begin{definition}[Gauss word \cite{turaev}]\label{d1}
Let $\hat{n}$ $=$ $\{1, 2, 3, \dots, n\}$.  
A {\it{word}} $w$ of length $n$ is a map $\hat{n}$  $\to$ $\mathbb{N}$.  The word is represented by $w(1)w(2)w(3) \cdots w(n)$.  
For a word $w : \hat{n}$ $\to$ $\mathbb{N}$, each element of $w(\hat{n})$ is called a {\it{letter}}.   Each letter has a sign.  The sign of a letter $k$ is denoted by $\sign (k)$.      
A word $w$ of length $2n$ is called a {\it{Gauss word}} of length $2n$ if each letter appears exactly twice in $w(1)w(2)w(3) \cdots w(2n)$.        
Let $\cyc$ and $\rev$ be maps $\hat{2n} \to \hat{2n}$ satisfying that $\cyc(p) \equiv p+1$ (mod $2n$) and $\rev(p) \equiv -p+1$ (mod $2n$).   
\end{definition}
\begin{definition}[oriented Gauss word \cite{Ito}]\label{ori_g}  
Let $v$ be a Gauss word.  For each letter $k$ of $v$, we distinguish the two $k$'s in $v$ by calling one $k$ a {\it{tail}} and the other a {\it{head}}.  
We express the assignments by adding extra informations to $v$ $=$ $v(1)v(2) \cdots v(2n)$, i.e., we add ``$\bar{~}$'' on the letters that are assigned heads.  This new word $v^*$ is called an {\it{oriented Gauss word}}.   We call each letter of an oriented Gauss word an {\it{oriented letter}}.      
Let $v^*$ ($w^*$, resp.) be an oriented Gauss word of length $2n$ induced from $v$ ($w$, resp.).  
Without loss of generality, for $v$, we may suppose that the set of the letters in $v(\hat{2n})$ is $\{ 1, 2, \dots, n \}$.   Here, it is clear that $v^*$ is a word of length $2n$ with letters $\{ 1, 2, \ldots, n, \overline{1}, \overline{2}, \ldots, \overline{n} \}$.     
Two oriented Gauss words $v^*$ and $w^*$ are {\it{isomorphic}} if there exists a bijection $f : v(\hat{2n})$ $\to$ $w(\hat{2n})$ such that $\sign (k)$ $=$ $\sign(f(k))$ and $w^* = f^* \circ v^*$, where $f^* :$ $v^*(\hat{2n})$ $=$ $\{ 1, 2, \ldots, n, \overline{1}, \overline{2}, \ldots, \overline{n} \}$ $\to$ $w^*(\hat{2n})$ is the bijection such that $f^*(i)$ $=$ $f(i)$ and $f^*(\overline{i})$ $=$ $\overline{f(i)}$ ($i=1, 2, \dots, n$).        
Two oriented Gauss words $v^*$ and $w^*$ are {\it{cyclically isomorphic}} if there exists a bijection $f : v(\hat{2n})$ $\to$ $w(\hat{2n})$ such that $\sign (k)$ $=$ $\sign(f(k))$ and
 there exists $t \in \mathbb{Z}$ satisfying that $w^* \circ (\cyc)^{t} = f^* \circ v^*$, where $f^* :$ $v^*(\hat{2n})$ $=$ $\{ 1, 2, \ldots, n, \overline{1}, \overline{2}, \ldots, \overline{n} \}$ $\to$ $w^*(\hat{2n})$ is the bijection such that $f^*(i)$ $=$ $f(i)$ and $f^*(\overline{i})$ $=$ $\overline{f(i)}$ ($i=1, 2, \dots, n$), and where 
$\cyc :$ $\hat{2n} \to \hat{2n}$ is as in Definition~\ref{d1}.    
The isomorphisms and the cyclic isomorphisms obtain equivalence relations on the oriented Gauss words, respectively.     
For an oriented Gauss word $v^*$ of length $2n$, $[[v^*]]$ denotes the equivalence class (the cyclic equivalence class, resp.) of $v^*$.   
In this paper, we use $[[\cdot]]$ as the equivalence class or the cyclic equivalence class depending on the context.  
For $v^*$, an oriented Gauss word ${{v'}^*}$ is called an {\emph{oriented sub-Gauss word}} of the oriented Gauss word $v^*$ if ${{v'}^*}$ is ${v^*}$ itself, or it is obtained from $v^*$ by ignoring some pairs of letters.  The set of the oriented sub-Gauss words of $v^*$ is denoted by $\sub(v^*)$.  For a given oriented Gauss word $v^*$ of length $2n$, let $v^*_{\rev}$ be the oriented Gauss word such that $v^*_{\rev}(i)$ $=$ $v^*(\rev(i))$ ($i \in \hat{2n}$).   It implies a map $v^*$ $\mapsto$ $v^*_{\rev}$ on a set of oriented Gauss words and the map is denoted by $\rev^*$.   
\end{definition} 
In the rest of this paper, we suppose that each letter of an oriented Gauss word has a sign and then every isomorphism class is canonically split into fine isomorphism class.  Under this assumption, however, if there is no confusion, we also call it oriented Gauss word and use the (cyclic)  isomorphisms and (cyclic)  equivalences.  
We note that the equivalence classes (the cyclic equivalence classes, resp.) of the oriented Gauss words of length $2n$ have one to one correspondence with the based arrow diagrams (the  arrow diagrams), each of which has $n$ arrows.  
We identify these four expressions and freely use either one of them depending on situations (Fig.~\ref{four_arrow}). In general, we note that $v^*$ and $v^* \circ \rev$ are not isomorphic.  This implies that an arrow diagram and its reflection image are not ambient isotopic in general.

\begin{figure}[htbp]
\begin{center}
\includegraphics[width=12cm]{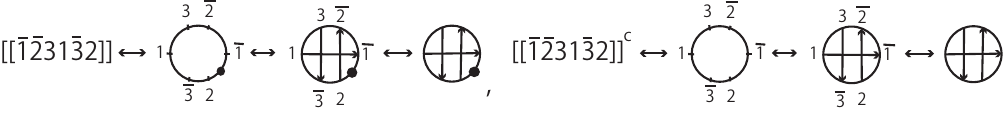}
\caption{Four expressions (signs are omitted)}
\label{four_arrow}
\end{center}
\end{figure} 
\begin{notation}[$\check{G}_{\le d}$, $\check{n}_d$, $\check{G}_{b, d}$]\label{not4}
Let $\check{G}_{< \infty}$ be the set of the arrow diagrams, i.e., the set of isomorphism classes of the oriented Gauss words.  Since $\check{G}_{< \infty}$ consists of countably many elements, there exists a bijection between $\check{G}_{< \infty}$ and $\{ {x}^*_i \}_{i \in {\mathbb{N}}}$, where ${x}^*_i$ is a variable.    
Choose and fix a bijection $\check{f} : \check{G}_{< \infty}$ $\to$ $\{ {x}^*_i \}_{i \in {\mathbb{N}}}$ satisfying: the number of arrows of $\check{f}^{-1}({x}^*_i)$ is less than or equal to that of $\check{f}^{-1}({x}^*_j)$ if and only if $i \le j$ $(i, j \in {\mathbb{N}})$.  For each positive integer $d$, let $\check{G}_{\le d}$ be the set of arrow diagrams each consisting of at most $d$ arrows.  
Let $\check{n}_d$ $=$ $|\check{G}_{\le d}|$.  It is clear that $\check{f}|_{\check{G}_{\le d}}$ is a bijection from $\check{G}_{\le d}$ to $\{ {x}^*_1, {x}^*_2, \ldots, {x}^*_{\check{n}_d} \}$.  
For each pair of integers $b$ and $d$ ($2 \le b \le d$), let $\check{G}_{b, d}$ $=$ $\check{G}_{\le d} \setminus \check{G}_{\le b-1}$.
Then $\check{f}|_{\check{G}_{b, d}}$ is a bijection $\check{G}_{b, d}$ $\to$ $\{ {x}^*_{\check{n}_{b-1} +1}, {x}^*_{\check{n}_{b-1} + 2}, \dots, {x}^*_{\check{n}_d} \}$.  
\end{notation}
 
In the rest of this paper, we use the notations in Notation~\ref{not4} unless otherwise denoted, and we freely use this identification between $\check{G}_{< \infty}$ and $\{{x}^*_i \}_{i \in \mathbb{N}}$.  

Recall that $AD_{D_K}$ gives an equivalence class of oriented Gauss words, say $[[v^*_{D_K}]]$.  Then, by the definition of the equivalence relation, it is easy to see that the map $D_K \mapsto [[v^*_{D_K}]]$ is well-defined.
\begin{definition}[$x^{*}(AD)$]\label{ad_def}
Let $x^* \in \{ x^*_i \}_{i \in \mathbb{N}}$.  For a given arrow diagram $AD$, fix an oriented Gauss word $G^*$ representing $AD$.  
Let $\sub_{x^*} (G^*)$ $=$ $\{H^*~|~H^* \in \sub(G^*)$, $[[H^*]]=x^* \}$. 
Recall that each letter has a sign $\in \{ \pm 1 \}$.  For an oriented Gauss word $H^* \in \sub_{x^*} (G^*)$, the sign $\sign(H^*)$ defined by 
\[
\sign(H^*) = \prod_{\alpha : {\textrm{letter in}}~H^*}~\sign(\alpha).
\] 
Then, 
$\sum_{H^* \in \sub_{x^*} (G^*)} \sign(H^*)$
is denoted by $x^* (G^*)$.     
Let ${G'}^*$ be another oriented Gauss word representing $AD$.   By the definition of the (cyclically) isomorphism of the Gauss words, it is easy to see $x^*({G'}^*)$ $=$ $x^*(G^*)$.  Hence, we shall denote this number by $x^*(AD)$.  
If $AD$ is an arrow diagram of a knot $K$, then $x^*(AD)$ can be denoted by $x^*(D_K)$.
\end{definition}
We note that $x^*(AD)$ is calculated by geometric observations.  We give some examples below (Example~\ref{example2}).   
\begin{example}\label{example2}
$\dxup \left(\ \ahaas \ \  \right)$ $=$ $1$, $\dxump \left(\ \ahaam \ \  \right)$ $=$ $0$, and $\dxupm \left(\ \ahaam \ \  \right)$ $=$ $-1$.
\end{example}
\begin{definition}[$\tilde{x^*} (z)$, $\tilde{x^*} ({[[z]]})$]\label{tilde_ll}
For an arrow diagram $x^*$,  
we define the function $\tilde{x^*}$ from the set of the oriented Gauss words to $\{ -1, 0, 1 \}$ by
 \[
{\tilde{x^*}} (F^*) = \begin{cases}
\sign(F^*) \qquad ~{\textrm{if}}~[[F^*]] = x^* \\
0 \qquad\qquad ~{\textrm{if}}~[[F^*]] \neq x^*.
\end{cases}
\] 
By definition, it is easy to see $\tilde{x^*}(F^*_1)$ $=$ $\tilde{x^*}(F^*_2)$ for each pair $F^*_1$, $F^*_2$ with $[[F^*_1]]$ $=$ $[[F^*_2]]$.  Thus, we shall denote this number by $\tilde{x^*}([[F^*_1]])$.  If $[[F^*]]$ corresponds to an arrow diagram of a knot diagram $D_K$ of a knot $K$, then $\tilde{x^*}([[F^*]])$ is denoted by $\tilde{x^*}(D_K)$.  
Let $\mathbb{Z}[\check{G}_{< \infty}]$ be the free $\mathbb{Z}$-module generated by the elements of $\check{G}_{\le l}$, where $l$ is sufficiently large (note that we always consider finite sums).  
Then, we linearly extend $\tilde{x^*}$ to the function from $\mathbb{Z}[\check{G}_{< \infty}]$ to $\mathbb{Z}$.  
\begin{example}\label{exampletilde}
$\widetilde{\dxup} \left( \dxup \right)$ $=$ $1$, $\widetilde{\dxbpm} \left(\dxbmp \right)$ $=$ $0$, and $\widetilde{\dxbmp} \left(\dxbmp \right)$ $=$ $-1$.
\end{example}
By definition, for any oriented Gauss word $G^*$ with $[[G^*]] = AD$, 
\begin{equation}\label{tilde_x*}
x^*(AD) = \sum_{z^* \in \sub(G^*)} \tilde{x^*} (z^*).
\end{equation}
\end{definition}

We define the elements of $\mathbb{Z}[\check{G}_{< \infty}]$ called {\it{relators}} of types ($\check{\ii}$), ($\check{\s}$), ($\check{\w}$), ($\check{\sss}$), and ($\check{\www}$).  

Before we start to define relators, we note that in Definition~\ref{def_relators_arrow}, if the oriented Gauss word $G^*$ is obtained from a knot diagram $D_K$, then each relator corresponds to a Reidemeister move: Type ($\check{\ii}$) relator to $\mathcal{RI}$, Type ($\check{\s}$) relator to strong~$\mathcal{RI\!I}$, Type ($\check{\w}$) relator to weak~$\mathcal{RI\!I}$, Type ($\check{\sss}$) relator to strong~$\mathcal{RI\!I\!I}$, and Type ($\check{\www}$) relator to weak~$\mathcal{RI\!I\!I}$.  
\begin{figure}[h!]
\includegraphics[width=12cm]{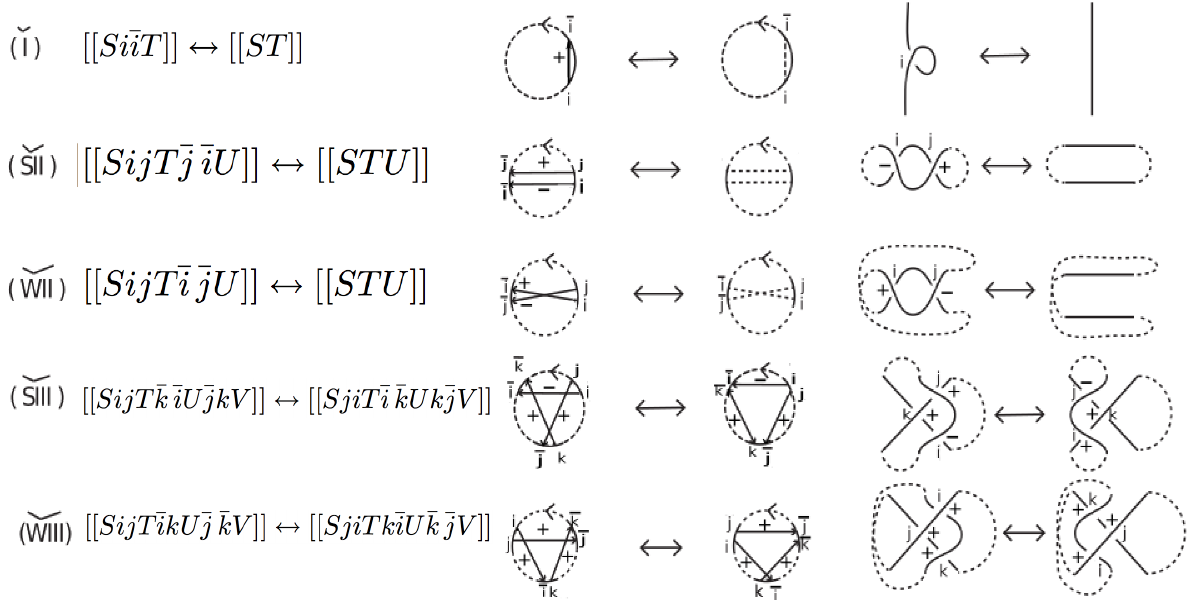}
\caption{Reidemeister moves (the mirror images of the arrow diagrams and the corresponding words are omitted)}\label{def3c}
\end{figure}

Next, we define \emph{relators} for an arrow diagram.  The definition of the case of based arrow diagram is straightforward.  
\begin{definition}[relators (Fig.~\ref{def3c})]\label{def_relators_arrow}
\begin{itemize}
\item Type ($\check{\ii}$). 
An element $r^*$ of $\mathbb{Z}[\check{G}_{< \infty}]$ is called a \emph{Type $(\check{\ii})$ relator} if there exist an oriented Gauss word $ST$ and a letter $i$ not in $ST$ such that $r^*$ $=$ $[[Si\bar{i}T]]$ or $[[S\bar{i}iT]]$ where $\sign(i) \in \{\pm 1\}$.  
\item Type ($\check{\s}$).  
An element $r^*$ of $\mathbb{Z}[\check{G}_{< \infty}]$ is called a \emph{Type $(\check{\s})$ relator} if there exist an oriented Gauss word $STU$ and letters $i$ and $j$ not in $STU$ such that $\sign(i)$ $\neq$ $\sign(j)$ and  $r^*$ $=$ $[[SijT\bar{j}\,\bar{i}U]]$ $+$ $[[SiT\bar{i}U]]$ $+$ $[[SjT\bar{j}U]]$.    
\item Type ($\check{\w}$).  
An element $r^*$ of $\mathbb{Z}[\check{G}_{< \infty}]$ is called a \emph{Type $(\check{\w})$ relator} if there exist an oriented Gauss word $STU$ and letters $i$ and $j$ not in $STU$ such that $\sign(i)$ $\neq$ $\sign(j)$ and 
$r^*$ $=$ $[[SijT\bar{i}\,\bar{j}U]]$ $+$ $[[SiT\bar{i}U]]$ $+$ $[[SjT\bar{j}U]]$.    
\item Type ($\check{\sss}$).  
An element $r^*$ of $\mathbb{Z}[\check{G}_{< \infty}]$ is called a \emph{Type $(\check{\sss})$ relator} if there exist an oriented Gauss word $STUV$ and letters $i$, $j$ and $k$ not in $STUV$ such that  
\begin{align*}
r^* =& \left( [[SijT\bar{k}\,\bar{i}U\bar{j}kV]] + [[SijT\bar{i}U\bar{j}V]] + [[SiT\bar{k}\,\bar{i}UkV]] + [[SjT\bar{k}U\bar{j}kV]] \right) \\
&-  \left([[SjiT\bar{i}\,\bar{k}Uk\bar{j}V]] + [[SjiT\bar{i}U\bar{j}V]] + [[SiT\bar{i}U\,\bar{k}UkV]] + [[SjT\bar{k}Uk\bar{j}V]] \right)
\end{align*}
with $(\sign(i), \sign(j), \sign(k))$ $=$ $(-1,1,1)$ 
or
\begin{align*}
& \left( [[SkjTi\bar{k}U\bar{j}\,\bar{i}V]] + [[SjTiU\bar{j}\,\bar{i}V]] + [[SkTi\bar{k}U\bar{i}V]] + [[SkjT\bar{k}U\bar{j}V]] \right)\\
&-  \left([[SjkT\bar{k}iU\bar{i}\,\bar{j}V]] + [[SjTiU\bar{i}\,\bar{j}V]] + [[SkT\bar{k}iU\bar{i}V]] + [[SjkT\bar{k}U\bar{j}V]] \right) 
\end{align*}
with $(\sign(i), \sign(j), \sign(k))$ $=$ $(1,-1, 1)$.  
\item Type ($\check{\www}$).   
An element $r^*$ of $\mathbb{Z}[\check{G}_{< \infty}]$ is called a \emph{Type $(\check{\www})$ relator} if there exist an oriented Gauss word $STUV$ and letters $i$, $j$, and $k$ not in $STUV$ such that  
\begin{align*}
r^* =& \left( [[SijT\bar{i}kU\bar{j}\,\bar{k}V]] + [[SijT\bar{i}U\bar{j}V]] + [[SiT\bar{i}UkU\bar{k}V]] + [[SjTkU\bar{j}\,\bar{k}V]] \right) \\
&-  \left( [[SjiTk\bar{i}U\bar{k}\,\bar{j}V]] + [[SjiT\bar{i}U\bar{j}V]] + [[SiTk\bar{i}U\bar{k}V]] + [[SjTkU\bar{k}\,\bar{j}V]]  \right).
\end{align*}
with $(\sign(i), \sign(j), \sign(k))$ $=$ $(1,1,1)$
or
\begin{align*}
&\left( [[S\bar{k}\,\bar{j}Tk\bar{i}UjiV]] + [[S\bar{j}T\bar{i}UjiV]] + [[S\bar{k}Tk\bar{i}UiV]] + [[S\bar{k}\,\bar{j}TkUjV]] \right) \\
&-  \left( [[S\bar{j}\,\bar{k}T\bar{i}kUijV]] + [[S\bar{j}T\bar{i}UijV]] + [[S\bar{k}T\bar{i}kUiV]] + [[S\bar{j}\,\bar{k}TkUjV]]  \right),
\end{align*} 
with $(\sign(i), \sign(j), \sign(k))$ $=$ $(1,1,1)$.
\end{itemize}
\end{definition}

We introduce notations in the following.      
Let $D_K$ and $D'_K$ be two knot diagrams of a knot $K$.  
If $D_K$ is related to $D'_K$ by a single $\mathcal{RI}$ (strong~$\mathcal{RI\!I}$, weak~$\mathcal{RI\!I}$, strong~$\mathcal{RI\!I\!I}$, or weak~$\mathcal{RI\!I\!I}$, resp.), then there are Gauss words $G^*$ and ${G'}^*$ such that (by exchanging $D_K$ and $D'_K$, if necessary), $G^*$ $=$ $Si\bar{i}T$ ($S\bar{i}iT$, $SijT\bar{j}\,\bar{i}U$, $SijT\bar{i}\,\bar{j}U$, $SijT\bar{k}\,\bar{i}U\bar{j}kV$, $SkjTi\bar{k}U\bar{j}\,\bar{i}V$, $SijT\bar{i}kU\bar{j}\,\bar{k}V$, or $S\bar{k}\,\bar{j}Tk\bar{i}UjiV$, resp.) and ${G'}^*$ $=$ $ST$ ($ST$, $STU$, $STU$, $SjiT\bar{i}\,\bar{k}Uk\bar{j}V$, $SjkT\bar{k}iU\bar{i}\,\bar{j}V$, $SjiTk\bar{i}U\bar{k}\,\bar{j}V$, or $S\bar{j}\,\bar{k}T\bar{i}kUijV$, resp.) such that $[[G^*]]=AD_{D_K}$ and $[[{G'}^*]]=AD_{D'_K}$ (Fig.~\ref{def3c}).  The subset of $\sub(G^*)$ such that each element has exactly $m$ pairs of oriented letters, each of which arises from $i$, $j$, and $k$ in $G^*$ is denoted by $\sub^{(m)} (G^*)$, where $m=0, 1, 2,$ or $3$.    By definition, we have 
\begin{equation}\label{sub*}
\sub(G^*) = \sub^{(0)}(G^*) \amalg \sub^{(1)} (G^*) \amalg \sub^{(2)} (G^*) \amalg \sub^{(3)} (G^*).   
\end{equation}
Similarly, for an arrow diagram $x^*$, $\sub^{(m)}_{x^*} (G^*)$ denotes the subset of $\sub_{x^*}(G^*)$ consisting of elements, each of which has exactly $m$ pairs of oriented letters, each of which arises from $i$, $j$, and $k$.  Then,  
\begin{equation}\label{subx*}
\sub_{x^*} (G^*) = \sub^{(0)}_{x^*} (G^*) \amalg \sub^{(1)}_{x^*} (G^*) \amalg \sub^{(2)}_{x^*} (G^*) \amalg \sub^{(3)}_{x^*} (G^*).  
\end{equation}

Let $\mathcal{D}$ be the set of (long) virtual knot diagrams.      
Next, for each element $\sum_i \alpha_i x^*_i \in \mathbb{Z}[\check{G}_{< \infty}]$, we define a function $\mathcal{D}$ $\to$ $\mathbb{Z}$, also denoted by $\sum_i \alpha_i x^*_i$.  
\begin{definition}[$\sum_i \alpha_i x^*_i$, $\sum_i \alpha_i \tilde{x}^*_i$]\label{def_xiv}
Let $b$ and $d$ ($2 \le b \le d$) be integers and $\check{G}_{\le d}$, $\check{G}_{b, d}$, and $\{ {x}^*_i \}_{i \in \mathbb{N}}$ as in Notation~\ref{not4}.   Let $\mathbb{Z}[\check{G}_{< \infty}]$ be as in Definition~\ref{tilde_ll}.     
Recall that $\check{n}_d$ $=$ $|\check{G}_{\le d}|$, $\check{G}_{b, d}$ $=$ $\{ {x}^*_i \}_{\check{n}_{b-1} +1 \le i \le \check{n}_d}$, and each $x^*_i$ represents the arrow diagram $\check{f}^{-1}(x^*_i)$ as in Notation~\ref{not4}.  For each 
\[ \sum_{ \cineq } \alpha_i x^*_i \in \mathbb{Z}[\check{G}_{< \infty}], \]
we define an integer-valued function $\mathcal{D}$ $\to$ $\mathbb{Z}$, also denoted by $\cFsum$, by
\[D_K \mapsto \cFsum (D_K),\]
where $x^*_i (D_K)$ is the integer introduced in Notation~\ref{ad_def}.   

Analogously, for each $\cNsum \cat \in \mathbb{Z}[\check{G}_{< \infty}]$, we define the function
\[
\cNsum \cat : \mathbb{Z}[\check{G}_{< \infty}] \to \mathbb{Z}
\]
by
\[
\left( \cNsum \cat \right)(
AD) = \cNsum \cat(AD),  
\]
where $\tilde{x}^*_i (AD)$ is the integer introduced in Definition~\ref{tilde_ll}.
\end{definition}

Then, by using this setting, we can describe the relation between a Type~($\check{\ii}$) relator and a single $\mathcal{RI}$ as follows.  
Suppose that a knot diagram $D_K$ is related to another knot diagram $D'_K$ by a single $\mathcal{RI}$.  Then, we recall that there exist an oriented letter $i$ and an oriented Gauss word $S$ such that $AD_{D_K}$ $=$ $[[Si\bar{i}T]]$ or $[[S\bar{i}iT]]$ and $AD_{D'_K}$ $=$ $[[ST]]$.  Since the arguments are essentially the same, without loss of generality, we may suppose that $T=\emptyset$ and $AD_{D_K}$ $=$ $[[Si\bar{i}T]]$, i.e.,  $AD_{D_K}$ $=$ $[[Si\bar{i}]]$ (every case essentially returns to the argument of the case even if $K$ is a long virtual knot).  
Here, we note that in this case in the decomposition (\ref{sub*}) in the note preceding of Definition~\ref{def_xiv}, $\sub^{(2)}(G^*)$ $=$ $\emptyset$ and $\sub^{(3)}(G^*)$ $=$ $\emptyset$, where $G^*=Si\bar{i}$.  
Then, by (\ref{tilde_x*}) in Definition~\ref{tilde_ll}, and by (\ref{sub*}) and (\ref{subx*}) in the note  preceding of Definition~\ref{def_xiv}, 
\begin{equation*}
\begin{split}
\cFsum &(D_K) = \scFsum \left( \sum_{z^* \in \sub(G^*)} \tilde{x}^*_i (z^*) \right) \\
\!\!\!&= \scFsum \left( \sum_{z^*_0 \in \sub^{(0)}(G^*)} \tilde{x}^*_i (z^*_0) + \sum_{z^*_1 \in \sub^{(1)}(G^*)} \tilde{x}^*_i (z^*_1) \right).    
\end{split}
\end{equation*}
Note that each element $z^*_0 \in$ $\sub^{(0)}(G^*)$ is an oriented sub-Gauss word of $S$.  Then it is clear that $\sub^{(1)}(G^*)$ $=$ $\{ z^*_0 i\bar{i}~|~z^*_0 \in \sub^{(0)}(G^*) \}$.  
Thus, 
\begin{align*}
\cFsum&(D_K) 
= \scFsum \left( \sum_{z^*_0 \in \sub^{(0)}(G^*)} \tilde{x}^*_i (z^*_0) + \sum_{z^*_0 \in \sub^{(0)}(G^*)} \tilde{x}^*_i (z^*_0 i\bar{i}) \right) \\
&= \scFsum \left( \sum_{z^*_0 \in \sub^{(0)}(G^*)} \tilde{x}^*_i ([[z^*_0]]) + \sum_{z^*_0 \in \sub^{(0)}(G^*)} \tilde{x}^*_i ([[z^*_0 i\bar{i}]]) \right).      
\end{align*}
On the other hand, since $\sub({G'}^*)$ is identified with $\sub^{(0)}(G^*)$,
\begin{align*}
\cFsum(D'_K) &= \scFsum \left(\sum_{{z'}^* \in \sub ({G'}^*)} \tilde{x}^*_i ({z'}^*) \right) (\because (\ref{tilde_x*})) \\
&=\scFsum \left(\sum_{z^*_0 \in \sub^{(0)} (G^*)} \tilde{x}^*_i (z^*_0) \right)\\
&= \scFsum \left( \sum_{z^*_0 \in \sub^{(0)} (G^*)} \tilde{x}^*_i \big( [[z^*_0 ]] \big) \right).  
\end{align*}
Here, the last equality is needed to obtain a condition of relators.  
As a conclusion, the difference of the values is calculated as follows:    
\begin{align*}
\cFsum(D_K) &-\cFsum(D'_K)\\ 
&= \cNsum \sum_{z^*_0 \in \sub^{(0)}(G^*)} \alpha_i \tilde{x}^*_i \left( [[z^*_0 i\bar{i}]] \right).
\end{align*}
We note that it is a linear combination of the values of Type~$\check{(\ii)}$ relators via $\tilde{x}^*_i$. 

For the case Type ($\check{\s}$), ($\check{\w}$), ($\check{\sss}$), or ($\check{\www}$) relators, the arguments are slightly more complicated than that of Type ($\check{\ii}$) relator, and we will explain them in Section~\ref{proofInvariant}.   Thus, we omit them here.       
\begin{definition}[$\check{R}_{\epsilon_1 \epsilon_2 \epsilon_3 \epsilon_4 \epsilon_5}$]\label{dfn_relator2}
For each $(\epsilon_1, \epsilon_2, \epsilon_3, \epsilon_4, \epsilon_5) \in \{0, 1\}^{5}$, let $\check{R}_{\epsilon_1 \epsilon_2 \epsilon_3 \epsilon_4 \epsilon_5}$ $=$ $\cup_{\epsilon_i = 1} \check{R}_i$ ($\subset \mathbb{Z}[\check{G}_{< \infty}]$), where $\check{R}_1$ is the set of the Type ($\check{\ii}$) relators (corresponding to $\mathcal{RI}$), $\check{R}_2$ is the set of the Type ($\check{\s}$) relators (corresponding to strong~$\mathcal{RI\!I}$), $\check{R}_3$ is the set of the Type ($\check{\w}$) relators (corresponding to weak~$\mathcal{RI\!I}$), $\check{R}_4$ is the set of the Type ($\check{\sss}$) relators (corresponding to strong~$\mathcal{RI\!I\!I}$), and $\check{R}_5$ is the set of the Type ($\check{\www}$) relators (corresponding to weak~$\mathcal{RI\!I\!I}$).  
\end{definition}
Here, for integers $b$ and $d$ ($2 \le b \le d$), let $\check{O}_{b, d}$ be the projection $\mathbb{Z}[\check{G}_{< \infty}]$ $\to$ $\mathbb{Z}[\check{G}_{b, d}]$.  Note that $\check{O}_{b, d}$ is a linear map. Then, we have the next proposition.     
\begin{proposition}\label{relator_prop*}
For each pair of integers $b$ and $d$ $(2 \le b \le d)$, let $\cNsum \cat$ be a function as in Definition~\ref{def_xiv}.  
For $(\epsilon_1, \epsilon_2, \epsilon_3, \epsilon_4, \epsilon_5)$ $\in \{ 0, 1 \}^5$, let $\cRep$ be the set as in Definition~\ref{dfn_relator2}.
The following two statements are equivalent: 
\begin{enumerate}
\item $\cNsum \alpha_i \tilde{x}^*_i (r^*) = 0 \quad (\forall r^* \in \cRep)$.\label{s1*}
\item $\cNsum \alpha_i \tilde{x}^*_i (r^*) = 0 \quad (\forall r^* \in \check{O}_{b, d}(\cRep))$.\label{s2*}
\end{enumerate}
\end{proposition}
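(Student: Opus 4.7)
The plan is to reduce the proposition to a single observation: the linear functional $F := \cNsum \alpha_i \tilde{x}^*_i : \mathbb{Z}[\check{G}_{<\infty}] \to \mathbb{Z}$ factors through the projection $\check{O}_{b,d}$, i.e.\ $F = F \circ \check{O}_{b,d}$. Once this factorization is in hand, both directions of (\ref{s1*}) $\Leftrightarrow$ (\ref{s2*}) follow at once. Indeed, for any $r^* \in \cRep$, the identity $F(r^*) = F(\check{O}_{b,d}(r^*))$ shows that the value $F$ takes on $r^*$ agrees with the value $F$ takes on its projection $\check{O}_{b,d}(r^*) \in \check{O}_{b,d}(\cRep)$. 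So the vanishing of $F$ on all relators and the vanishing of $F$ on all projected relators are literally the same condition.

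\textbf{The key step.} To establish $F = F \circ \check{O}_{b,d}$, I would argue by $\mathbb{Z}$-linearity: it suffices to check $F(y^*) = 0$ for every basis arrow diagram $y^* \in \check{G}_{<\infty} \setminus \check{G}_{b,d}$, because every such $y^*$ satisfies $\check{O}_{b,d}(y^*) = 0$ while basis elements inside $\check{G}_{b,d}$ are fixed by $\check{O}_{b,d}$. A basis element $y^* \notin \check{G}_{b,d}$ has either strictly fewer than $b$ arrows or strictly more than $d$ arrows. By the ordering convention of Notation~\ref{not4}, every $x^*_i$ occurring in $F$ (that is, with $\check{n}_{b-1}+1 \le i \le \check{n}_d$) lies in $\check{G}_{b,d}$, hence has arrow count in $[b,d]$. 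By Definition~\ref{tilde_ll}, $\tilde{x}^*_i(y^*)$ is nonzero only when $[[y^*]] = x^*_i$, which would force $y^*$ and $x^*_i$ to carry the same number of arrows; this is impossible under the standing assumption on $y^*$. Hence $\tilde{x}^*_i(y^*) = 0$ for every index $i$ appearing in $F$, so $F(y^*) = 0$, and the factorization is proved.

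\textbf{Obstacles.} There is no substantive obstacle here; the result is a bookkeeping statement about the support of $F$. The only point requiring a small amount of care is to parse $\check{O}_{b,d}(\cRep)$ correctly as the elementwise image of the set of relators under the projection (not as the subset of relators already living in $\mathbb{Z}[\check{G}_{b,d}]$), after which the factorization $F = F \circ \check{O}_{b,d}$ delivers the equivalence immediately. The practical purpose of the proposition in the larger argument is precisely this reduction: verifying that a candidate coefficient vector $(\alpha_i)$ yields a functional annihilating all relators becomes a finite-dimensional linear-algebra problem inside $\mathbb{Z}[\check{G}_{b,d}]$, which is what makes the explicit determination of $v_{3,i}$ and $\tilde{v}_{3,j}$ in later sections tractable.
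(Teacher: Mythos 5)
Your proposal is correct and takes essentially the same route as the paper: the paper's proof likewise rests on the single identity $\tilde{x}^*_i(r^*) = \tilde{x}^*_i(\check{O}_{b,d}(r^*))$ for each $i$ with $\check{n}_{b-1}+1 \le i \le \check{n}_d$, which is exactly your factorization $F = F\circ \check{O}_{b,d}$ restricted to relators. Your extra step of verifying the vanishing of each $\tilde{x}^*_i$ on basis diagrams outside $\check{G}_{b,d}$ via arrow counts is just a slightly more explicit justification of the same observation.
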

\begin{proof}
Let $r^* \in \cRep$. For $\cineq$, 
\[
\tilde{x}^*_i (r^*) = \tilde{x}^*_i (\check{O}_{b, d} (r^*)).
\]
Therefore,
\[
\cNsum \cat (r^*) = \cNsum \cat (\check{O}_{b, d}(r^*)).  
\]
\end{proof}

\begin{definition}[Polyak algebra \cite{gpv}]\label{factGPV}
The \emph{Polyak algebra} $\mathcal{P}$ is the quotient module $\mathbb{\mathbb{Z}}[\check{G}_{< \infty}] / \cR_{10101}$. Let $n$ be an integer more than one.  
The \emph{truncated Polyak algebra} $\mathcal{P}_n$ is the quotient module $\mathbb{\mathbb{Z}}[\check{G}_{\le n}] / \check{O}_{2, n}(\cR_{10101})$.
\end{definition}
\begin{fact}[\cite{gpv}]\label{factPolyak} 
Let $\mathcal{K}$ be the set of $($long$)$ virtual knots.     
There exists an isomorphism between $\mathbb{Z}[\mathcal{K}]$ and $\mathcal{P}$.  
\end{fact}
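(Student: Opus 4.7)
The plan is to exhibit the isomorphism as the descent of the natural ``take the arrow diagram'' map, and to construct its inverse by the planar realization of arrow diagrams as virtual knot diagrams.

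First I would define $\phi : \mathbb{Z}[\mathcal{K}] \to \mathcal{P}$ on generators by $\phi(K) = [AD_{D_K}]$, where $D_K$ is any (long) virtual diagram representing $K$. For well-definedness, I would fix a sequence of generalized Reidemeister moves relating two diagrams of the same virtual knot, and check step by step that each move changes $AD_{D_K}$ by an element of $\check{R}_{10101}$. Virtual Reidemeister moves (detour moves) do not affect the arrow diagram since they involve no real crossings. Among classical moves, an $\mathcal{RI}$ produces exactly a Type $(\check{\ii})$ relator from the definition, a weak $\mathcal{RI\!I}$ produces a Type $(\check{\w})$ relator, and a weak $\mathcal{RI\!I\!I}$ produces a Type $(\check{\www})$ relator. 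The delicate part is that $\check{R}_{10101}$ contains neither strong $\mathcal{RI\!I}$ nor strong $\mathcal{RI\!I\!I}$ relators; for these I would invoke the standard ``detour principle'' of virtual knot theory, which allows one to reroute an arc past a cluster of real crossings so that any strong $\mathcal{RI\!I}$ (resp.\ $\mathcal{RI\!I\!I}$) move is realized as a composition of detour moves together with a weak $\mathcal{RI\!I}$ (resp.\ weak $\mathcal{RI\!I\!I}$) move and possibly $\mathcal{RI}$'s. Since detour moves act trivially on arrow diagrams, the corresponding change in $AD$ lies in the $\mathbb{Z}$-span of $\check{R}_1 \cup \check{R}_3 \cup \check{R}_5 = \check{R}_{10101}$.

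Next I would construct the inverse $\psi : \mathcal{P} \to \mathbb{Z}[\mathcal{K}]$. On a generator $AD \in \check{G}_{<\infty}$, realize $AD$ as a virtual knot diagram by embedding the underlying circle in the plane, drawing each signed arrow as an overpass/underpass at a real crossing according to its sign and orientation, and connecting the remaining strands by planar arcs in which every unavoidable intersection is declared virtual. Any two such realizations of the same $AD$ differ only by virtual Reidemeister moves and therefore represent the same virtual knot, so $\psi$ is well-defined on $\check{G}_{<\infty}$ and extends $\mathbb{Z}$-linearly. To descend to $\mathcal{P}$, I would verify that $\psi$ sends every element of $\check{R}_{10101}$ to zero: each Type $(\check{\ii})$, Type $(\check{\w})$, or Type $(\check{\www})$ relator is built from oriented Gauss words that can be simultaneously realized as virtual diagrams in which the pair (resp.\ triple) of distinguished crossings sits in a disk on which the corresponding Reidemeister move can be performed; the alternating signed sum of their realizations is therefore a telescoping combination of equal virtual knots and vanishes in $\mathbb{Z}[\mathcal{K}]$.

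Finally I would verify $\phi \circ \psi = \mathrm{id}$ and $\psi \circ \phi = \mathrm{id}$. The first holds on generators because the realization of $AD$ has $AD$ as its arrow diagram by construction. The second holds because, starting from any virtual diagram $D_K$, the realization of $AD_{D_K}$ differs from $D_K$ only in the planar routing of its arcs, hence only by detour moves, hence represents the same virtual knot $K$.

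The main obstacle I anticipate is the bookkeeping in the second verification for Types $(\check{\sss})$ and $(\check{\www})$: each such relator is a signed sum of eight oriented Gauss words, and one must exhibit a single virtual diagram on which the weak $\mathcal{RI\!I\!I}$ move produces exactly this eight-term combination of sub-arrow-diagram changes. This amounts to carefully tracking how the three new oriented letters $i,j,k$ (and the four subsets of them) interact with the existing arrows $S,T,U,V$ before and after the move, and showing that the contributions of the $\sub^{(0)}$ and $\sub^{(1)}$ pieces cancel while the $\sub^{(2)}$ and $\sub^{(3)}$ pieces produce precisely the stated signed combination. Once this combinatorial identity is established, both inverse checks follow and Fact~\ref{factPolyak} is proved.
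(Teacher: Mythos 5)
Your proposal founders at the very first step: the map $\phi(K)=[AD_{D_K}]$ does not descend to $\mathcal{P}=\mathbb{Z}[\check{G}_{<\infty}]/\check{R}_{10101}$, because the relators of Definition~\ref{def_relators_arrow} are \emph{not} differences of the arrow diagrams before and after a Reidemeister move. A Type $(\check{\w})$ relator, for instance, is the three-term sum $[[SijT\bar{i}\,\bar{j}U]]+[[SiT\bar{i}U]]+[[SjT\bar{j}U]]$ with all coefficients $+1$ and with no $[[STU]]$ term, whereas your well-definedness check would need $[[SijT\bar{i}\,\bar{j}U]]-[[STU]]$ to lie in the span of the relators; it does not. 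The isomorphism asserted in Fact~\ref{factPolyak} (which the paper imports from \cite{gpv} without proof) is induced not by $K\mapsto AD_{D_K}$ but by the map $I(D_K)=\sum_{z^*\in\sub(G^*)}[[z^*]]$, the sum over \emph{all} sub-arrow-diagrams of $AD_{D_K}$; this is exactly the computation the paper performs for $\mathcal{RI}$ just before Definition~\ref{dfn_relator2} and again in the proof of Theorem~\ref{gg_thm2}, where $I(D_K)-I(D'_K)$ is shown to be a sum of relators because the subdiagrams avoiding the new arrows cancel and the remaining terms assemble into precisely the stated three- or eight-term combinations. The relators are designed for $I$, not for $AD$.

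The same misidentification breaks your inverse: $\psi$ applied to a Type $(\check{\w})$ relator is a sum of three virtual knots with coefficients $+1,+1,+1$, which is not a ``telescoping combination of equal virtual knots'' and is not zero in $\mathbb{Z}[\mathcal{K}]$. The actual inverse of $I$ is obtained by M\"obius inversion (an alternating signed sum over subdiagrams, using that $I$ is unitriangular with respect to the number of arrows), and only then composed with the planar realization you describe. Your secondary device --- reducing strong $\mathcal{RI\!I}$ and strong $\mathcal{RI\!I\!I}$ to the weak versions via detour moves --- is a reasonable way to handle the fact that $\check{R}_{10101}$ omits the strong relators, but it cannot rescue the argument until the morphism itself is corrected.
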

\begin{fact}[Polyak \cite{P1}]\label{factPolyak_a}
Reidemeister moves of virtual knots are generated by two types of $\mathcal{RI}$, a type of weak~$\mathcal{RI\!I}$ with the fixed signs, and a type of strong~$\mathcal{RI\!I\!I}$ with the fixed signs.  
\end{fact}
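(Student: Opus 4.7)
The plan is to reduce every oriented Reidemeister move on virtual knot diagrams to the four generator types listed, by progressively constructing the missing sign patterns out of those already available. The key technical tool is the \emph{detour move}, which lets any strand be re-routed through virtual crossings without affecting the underlying Gauss data. At the level of arrow diagrams this means that only the cyclic arrangement and signs of the arrow endpoints carry information, so one may freely assume that any inserted or cancelled fragments are adjacent along the circle before or after each step.

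First, I would use the two $\mathcal{RI}$ generators to create and remove isolated positive or negative loops at will. Combined with the chosen weak $\mathcal{RI\!I}$ of prescribed signs, this yields the weak $\mathcal{RI\!I}$ with the opposite signs by the standard ``introduce a loop, slide it, cancel'' recipe: insert a trivial loop of the appropriate sign via $\mathcal{RI}$, route its arc next to the targeted crossing pair using a detour, apply the given weak $\mathcal{RI\!I}$ to cancel the target pair, and finally remove the leftover loop with the other $\mathcal{RI}$. The same recipe with the inserted loop threaded through the other strand also produces both strong $\mathcal{RI\!I}$ moves in both sign variants, so all four flavors of $\mathcal{RI\!I}$ become available.

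Next, for $\mathcal{RI\!I\!I}$, the strategy is to derive each of the remaining sign configurations of both strong and weak $\mathcal{RI\!I\!I}$ from the single specified generator by ``conjugation'' with the $\mathcal{RI\!I}$ moves obtained in the previous step. Concretely, to flip one of the three crossing signs of a target $\mathcal{RI\!I\!I}$, insert an $\mathcal{RI\!I}$ pair adjacent to the relevant crossing; the specified $\mathcal{RI\!I\!I}$ generator then applies to the reconfigured strands and, after undoing the insertion with the inverse $\mathcal{RI\!I}$, produces an $\mathcal{RI\!I\!I}$ with the desired modified sign. Iterating this procedure across the three crossings exhausts the $2^{3}=8$ sign assignments, while flipping one strand's orientation via a detour combined with $\mathcal{RI}$ interconverts strong and weak $\mathcal{RI\!I\!I}$ variants.

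The main obstacle is combinatorial rather than conceptual: one must verify case by case that each sign flip produced by the ``conjugation by $\mathcal{RI\!I}$'' step yields exactly the intended $\mathcal{RI\!I\!I}$ variant, without generating stray crossings that obstruct the subsequent cancellation. Since the enumeration runs over $16$ variants (eight strong and eight weak $\mathcal{RI\!I\!I}$ with three sign choices each) and each reduction is a short local tangle calculation, the cleanest presentation is to tabulate all cases in a single chart, which is precisely the bookkeeping carried out by Polyak in \cite{P1}.
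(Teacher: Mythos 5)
The paper offers no proof of this statement at all: it is recorded as a \emph{Fact} and attributed wholesale to Polyak \cite{P1}, so the only ``paper proof'' to compare against is the citation. Judged on its own merits, your sketch has gaps that go beyond bookkeeping. First, your main technical tool is misused: a detour move does not change the underlying Gauss diagram, so it cannot ``route'' an inserted kink next to a targeted crossing pair, nor can ``threading the loop through the other strand'' alter which arrows cross which --- the cyclic order and intersection pattern of arrow endpoints \emph{is} the Gauss data that detours preserve. Consequently your second step, in which all four oriented flavours of $\mathcal{RI\!I}$ are claimed to follow from the two $\mathcal{RI}$ generators and the single weak $\mathcal{RI\!I}$ alone, is unsupported: an $\mathcal{RI}$ move only ever inserts an \emph{isolated} arrow, the strong/weak $\mathcal{RI\!I}$ distinction is precisely whether the two inserted arrows cross in the arrow diagram, and no mechanism is given for converting one pattern into the other. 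In Polyak's actual derivation the remaining $\mathcal{RI\!I}$ variants are obtained only with the help of the $\mathcal{RI\!I\!I}$ generator, and his obstruction lemmas show that the order you propose (all of $\mathcal{RI\!I}$ first, $\mathcal{RI\!I\!I}$ afterwards) cannot be made to work.

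The $\mathcal{RI\!I\!I}$ step fails for a related reason. Conjugating the given $\mathcal{RI\!I\!I}$ by $\mathcal{RI\!I}$ pairs does yield new variants, but it does not let you flip an arbitrary crossing sign: Polyak proves that there exist $\mathcal{RI\!I\!I}$ moves which, even together with \emph{all} $\mathcal{RI}$ and $\mathcal{RI\!I}$ moves, do not generate the remaining ones, so the reachability relation among the variants under your conjugation is not complete, and the proof must verify that the specific chosen generator is one of the ``good'' ones. Likewise, ``flipping one strand's orientation via a detour combined with $\mathcal{RI}$'' is not an available operation --- the orientation of the diagram is fixed, and the strong/weak distinction for $\mathcal{RI\!I\!I}$ is again an intrinsic crossing pattern of three arrows that no detour can alter. (Your enumeration of sixteen variants is also off; there are eight oriented $\mathcal{RI\!I\!I}$ moves.) Since you ultimately defer the case analysis to ``the bookkeeping carried out by Polyak,'' and the steps you do supply are the ones that would not survive that bookkeeping, the essential content of the proof is missing.
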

\begin{notation}\label{notation1}
The Reidemeister moves of Fact~\ref{factPolyak_a} is called a \emph{minimal generating set} of Reidemeister moves.  By using Fact~\ref{factPolyak}, there exists a correspondence between the subset of relators $\cR_{10110}$ and  the \emph{minimal generating set} of Reidemeister moves.  The subset is denoted by $\cR_{10110}^{\min}$.  
\end{notation}

Using Notation~\ref{notation1}, it is elementary that  Fact~\ref{factPolyak} together with Definition~\ref{factGPV} implies Fact~\ref{prop_move}.
\begin{fact}\label{prop_move}
There exists an isomorphism $\mathbb{Z}[\mathcal{K}]$ between $\mathbb{\mathbb{Z}}[\check{G}_{< \infty}] / \cR_{10110}^{\min}$.    
\end{fact}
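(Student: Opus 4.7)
The plan is to derive Fact~\ref{prop_move} as an immediate consequence of Facts~\ref{factPolyak} and~\ref{factPolyak_a}, by showing that the two submodules $\langle \cR_{10101}\rangle$ and $\langle\cR_{10110}^{\min}\rangle$ of $\mathbb{Z}[\check{G}_{<\infty}]$ in fact coincide. Concretely, starting from Fact~\ref{factPolyak}, which gives $\mathbb{Z}[\mathcal{K}] \cong \mathbb{Z}[\check{G}_{<\infty}]/\cR_{10101}$, it is enough to establish an isomorphism of quotients
\[
\mathbb{Z}[\check{G}_{<\infty}]/\cR_{10101} \;\cong\; \mathbb{Z}[\check{G}_{<\infty}]/\cR_{10110}^{\min},
\]
which will follow once both submodules are identified with the full Reidemeister relator submodule $\mathcal{R}_{\text{all}}$.

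The first step is to produce the well-defined linear map in each direction. To define $\mathbb{Z}[\check{G}_{<\infty}]/\cR_{10110}^{\min} \to \mathbb{Z}[\check{G}_{<\infty}]/\cR_{10101}$, I would check that every generator $r^* \in \cR_{10110}^{\min}$ lies in $\langle\cR_{10101}\rangle$. The RI portion is shared by both sets, so only the weak~$\mathcal{RI\!I}$ and strong~$\mathcal{RI\!I\!I}$ relators (with the fixed signs of Fact~\ref{factPolyak_a}) need to be checked. For each such minimal move $D \leadsto D'$, Fact~\ref{factPolyak} supplies a sequence of RI, strong~$\mathcal{RI\!I}$, and strong~$\mathcal{RI\!I\!I}$ moves $D = D_0 \leadsto D_1 \leadsto \cdots \leadsto D_k = D'$, and the computation preceding Definition~\ref{dfn_relator2} (particularly the telescoping identity for $\cFsum(D_K) - \cFsum(D'_K)$ using the decompositions \eqref{sub*} and \eqref{subx*}) shows that the relator attached to the single move $D \leadsto D'$ is the sum of the relators attached to the intermediate steps $D_{i-1} \leadsto D_i$. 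Hence $r^*$ is a $\mathbb{Z}$-linear combination of elements of $\cR_{10101}$. The reverse direction $\mathbb{Z}[\check{G}_{<\infty}]/\cR_{10101} \to \mathbb{Z}[\check{G}_{<\infty}]/\cR_{10110}^{\min}$ is handled by exactly the same argument, this time invoking Fact~\ref{factPolyak_a} directly: every RI, strong~$\mathcal{RI\!I}$, and strong~$\mathcal{RI\!I\!I}$ move can be expressed as a composition of moves from the minimal generating set, and the same telescoping realizes each relator in $\cR_{10101}$ inside $\langle\cR_{10110}^{\min}\rangle$. The two maps are mutually inverse on generators, yielding the desired isomorphism.

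The main obstacle is not the overall logical structure but the bookkeeping in the telescoping step: the relators in Definition~\ref{def_relators_arrow} are not bare differences $[AD] - [AD']$ but specific alternating sums over sub-Gauss words obtained by inserting the letters $i,j,k$ involved in the move. One must verify that, when a move $D \leadsto D'$ factors through intermediate diagrams, the sub-Gauss-word book\-keep\-ing on the ambient word $G^*$ respects this factorization - that is, the contribution of each intermediate step to $\cFsum(D_K) - \cFsum(D'_K)$ agrees with the relator attached to that step in $\mathbb{Z}[\check{G}_{<\infty}]$. This is essentially the content of the derivation carried out for Type~$(\check{\ii})$ in the excerpt, and the analogous case analysis for the other move types (which the paper defers to Section~\ref{proofInvariant}) is precisely what is needed here. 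Once those identities are in place, the combination of Facts~\ref{factPolyak} and~\ref{factPolyak_a} gives Fact~\ref{prop_move} with no further work.
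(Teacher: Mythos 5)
Your overall route---deducing Fact~\ref{prop_move} from Fact~\ref{factPolyak} together with Polyak's minimal generating set (Fact~\ref{factPolyak_a}) by identifying the two relator submodules---is the route the paper intends; the paper itself offers nothing beyond the one-sentence assertion that the implication is ``elementary,'' so you are supplying a proof the authors omit. The plan is reasonable, but the key step as you state it does not go through. The telescoping identity lives at the level of diagrams: writing $I(G^*)=\sum_{z^*\in\sub(G^*)}[[z^*]]$ (so that $x^*(AD)=\tilde{x}^*(I(G^*))$ by (\ref{tilde_x*})), a factorization $D=D_0\leadsto\cdots\leadsto D_k=D'$ gives $I(G^*)-I({G'}^*)=\sum_i\bigl(I(G_{i-1}^*)-I(G_i^*)\bigr)$. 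But there is no single ``relator attached to a move'': the computation preceding Definition~\ref{dfn_relator2} and the proof of Theorem~\ref{gg_thm2} show that one move contributes a \emph{sum} of relators, one for each $z^*_0\in\sub^{(0)}(G^*)$ of the ambient word. Hence your telescoping only shows that certain \emph{sums} of relators of one type lie in the span of relators of the other type; it does not place an individual generator $r^*\in\cR_{10110}^{\min}$ inside $\langle\cR_{10101}\rangle$ or conversely, which is exactly what your two well-defined quotient maps require. The missing idea is a downward induction on the number of arrows: a relator built from ambient words $S,T,U,V$ is the top term of $I(G^*)-I({G'}^*)$ for the move performed on a diagram realizing the full word $STUV$, while all other terms are relators of the same type with strictly fewer arrows, so one can solve for the individual relator inductively. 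Without that step the asserted equality of submodules is not established.

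There are also some smaller inaccuracies worth fixing. Fact~\ref{factPolyak} is an isomorphism statement and does not ``supply a sequence of RI, strong~$\mathcal{RI\!I}$, and strong~$\mathcal{RI\!I\!I}$ moves''; moreover $\cR_{10101}$ corresponds to $\mathcal{RI}$, weak~$\mathcal{RI\!I}$, and \emph{weak}~$\mathcal{RI\!I\!I}$ (indices $1,3,5$ in Definition~\ref{dfn_relator2}), not the strong versions. Finally, ``the RI portion is shared by both sets'' is not quite right: by Notation~\ref{notation1}, $\cR_{10110}^{\min}$ contains only the Type~($\check{\ii}$) relators corresponding to the two kink types of the minimal generating set, so in the direction $\cR_{10101}\subseteq\langle\cR_{10110}^{\min}\rangle$ even the remaining $\mathcal{RI}$ relators require an argument.
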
  


{\color{black}{In order to simplify descriptions of Gauss diagram formulas, we prepare Definition~\ref{remarkGauss}. }} 
\begin{definition}\label{remarkGauss}
If $\displaystyle \cFsum (\cdot)$ is a knot invariant, it is called a \emph{Gauss diagram formula}. Traditionally, it is represented as a bilinear form $\left\langle \displaystyle \cFsum,~\cdot~\right\rangle$.     
Recall that an unsigned arrow diagram $x$ represents the sum of over all ways to assign signs to the arrows (Remark~\ref{remark:terminology}), 
 i.e.,  
\[
x := \sum_{x^* : x~{\text{with~signs}} } x^*.   
\]
For example, 
\[
\left\langle \dxb, \cdot \right\rangle = \left\langle \dxbp + \dxbmp + \dxbpm + \dxbm, \cdot \right\rangle.
\]
Let $b$, $c$, and $d$ be positive integers ($2 \le b \le c \le d$).  
If $\displaystyle \cFsum$ is obtained by only unsigned arrow diagrams for arrow diagrams $\in \check{G}_{c, d}$, we say that the coefficients $(\alpha_{\check{n}_b + 1}, \alpha_{\check{n}_b + 2}, \dots, \alpha_{\check{n}_d})$ satisfy an unsigned rule for $\mathbb{Z}[\check{G}_{c, d}]$.  
\end{definition}
Here we note the difference: $\widetilde{\dxbmp} \left(\dxbmp \right)$ $=$ $-1$ whereas in \cite{gpv}, $\left(\dxbmp,  \dxbmp \right)$ $=$ $1$.  We use this notation since the unsigned arrow diagram plays a crucial role of this paper.  
\begin{example}
Consider the arrow diagram $AD$ in Fig.~\ref{def5}.  
Fig.~\ref{def5} provides examples of values of functions represented by unsigned arrow diagrams.   
\begin{figure}[h!]
\includegraphics[width=6cm]{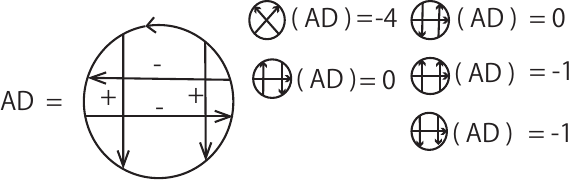}
\caption{Values of functions represented by unsigned arrow diagrams}\label{def5}
\end{figure}
\end{example} 
Proposition~\ref{unsignedProp} means that the unsigned notation is essential to present Vassiliev invariants.  
\begin{proposition}\label{unsignedProp}
Suppose that an arrow diagram $x^*_i$ of the longest length exists  in $\caFsum$.  Then, though there may be a necessary to exchange  order of $\{ x^*_i \}_i$, the longest length arrow diagrams should be  included in terms of the unsigned arrow diagram $x$ $(=$ $\sum_{i=1}^{2^n} x^*_i )$ that is a partial sum of $\caFsum$. 
\end{proposition}
The proof of Proposition~\ref{unsignedProp} will be given by Section~\ref{unsignedProof}.  

\section{Invariances}\label{proofInvariant}
\subsection{Invariances for Gauss diagram formulas  for virtual knots}\label{proofInvariant_virtual}
In this section, in order to prove Proposition~\ref{thm1}, we give Theorem~\ref{gg_thm2}.  The arguments are parallel to those of \cite{Ito_sp}, to which we refer the reader.    A special case of Theorem~\ref{gg_thm2} corresponds to the construction of invariants as in \cite{gpv}.   
\begin{theorem}\label{gg_thm2}
Let $b$ and $d$ $(2 \le b \le d)$ be integers, and let $\check{G}_{\le d}$, $\{ x^*_i \}_{i \in \mathbb{N}}$, $\mathbb{Z}[\check{G}_{< \infty}]$, $\check{n}_d$ $=$ $|\check{G}_{\le d}|$, $\check{G}_{b, d}$ $=$ $\{ x^*_i \}_{\check{n}_{b-1} +1 \le i \le \check{n}_{d}}$, and $\check{O}_{b, d}(\cRep)$ be as in Section~\ref{sec2}.     
Let $\cFsum$ and $\caFsum$ be functions as in Definition~\ref{def_xiv}.  
We arbitrarily choose $(\epsilon_1, \epsilon_2, \epsilon_3, \epsilon_4, \epsilon_5) \in \{0, 1\}^5$ and fix it.  Then, if
$\displaystyle \caFsum(r^*)=0$ for each $r^* \in \check{O}_{b, d}(\cRep)$,  then, $\displaystyle \cFsum$
is an integer-valued invariant under the Reidemeister moves corresponding to $\epsilon_j =1$.  

If $(\epsilon_1, \epsilon_2, \epsilon_3, \epsilon_4, \epsilon_5)$ $=$ $(1, 0, 1, 1, 0)$, $(1, 1, 0, 0, 1)$, or $(1, 1, 0, 1, 0)$, $\displaystyle \cFsum$ is an integer-valued invariant of $($long$)$ virtual knots.    

\end{theorem}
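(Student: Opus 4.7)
The plan is to extend the model argument for Type~($\check{\ii}$) relators, worked out in detail in the paragraph preceding Definition~\ref{dfn_relator2}, to each of the remaining four relator types, and then assemble the cases.

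Fix a single Reidemeister move taking a (long) virtual diagram $D_K$ to $D'_K$. By Definition~\ref{def_relators_arrow} and Figure~\ref{def3c}, choose oriented Gauss words $G^*$, ${G'}^*$ with $[[G^*]]=AD_{D_K}$, $[[{G'}^*]]=AD_{D'_K}$ in the standard local form (one, two, or three distinguished letters for the moves of type I, II, III respectively). Using the partition
\[
\sub(G^*) = \sub^{(0)}(G^*) \amalg \sub^{(1)}(G^*) \amalg \sub^{(2)}(G^*) \amalg \sub^{(3)}(G^*)
\]
introduced in the preamble to Definition~\ref{def_xiv}, and the identity (\ref{tilde_x*}), I would write
\[
\cFsum(D_K)-\cFsum(D'_K) = \cNsum \alpha_i \sum_{m=0}^{3}\Bigl( \sum_{z^* \in \sub^{(m)}(G^*)} \tilde{x}^*_i(z^*) - \sum_{z'^* \in \sub^{(m)}({G'}^*)} \tilde{x}^*_i(z'^*)\Bigr).
\]

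The core of the argument is to match, for every $m$, the sub-Gauss words on the two sides. The key observation is that once the distinguished letters are fixed, every sub-Gauss word on either side is of the form $z^*_0\,\sigma$, where $z^*_0$ is a sub-Gauss word of the unchanged background $S$, $T$, $U$, $V$ and $\sigma$ is one of finitely many local patterns in the distinguished letters. Pairing the local patterns across the move produces, for each $z^*_0$, exactly one relator of the relevant type: for Type~($\check{\s}$) or ($\check{\w}$) the relator has three summands (all two-, all one-, second one-arrow patterns), and for Type~($\check{\sss}$) or ($\check{\www}$) the relator has the alternating eight-term form of Definition~\ref{def_relators_arrow}. Grouping the inner sums by $z^*_0$ therefore yields
\[
\cFsum(D_K)-\cFsum(D'_K) = \sum_{z^*_0} \cNsum \alpha_i \tilde{x}^*_i\bigl(r^*(z^*_0)\bigr),
\]
where each $r^*(z^*_0) \in \check{R}_j$ for the appropriate $j\in\{1,2,3,4,5\}$ corresponding to the move.

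By Proposition~\ref{relator_prop*}, the hypothesis $\caFsum(r^*)=0$ for every $r^*\in \check{O}_{b,d}(\cRep)$ is equivalent to $\caFsum(r^*)=0$ for every $r^*\in\cRep$. Hence each summand vanishes and $\cFsum(D_K)=\cFsum(D'_K)$; invariance under the Reidemeister moves flagged by $\epsilon_j=1$ follows. For the last sentence, selecting $(\epsilon_1,\epsilon_2,\epsilon_3,\epsilon_4,\epsilon_5)$ so that the five flagged move types cover the Reidemeister moves for (long) virtual knots (for instance, the minimal generating set in Fact~\ref{factPolyak_a}) yields the full invariance statement.

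The main obstacle is the bookkeeping for Type~($\check{\sss}$) and ($\check{\www}$): one must verify that the canonical pairing of local patterns across a type~III move, over all four strata $m=0,1,2,3$, reassembles precisely into the eight-term alternating expressions of Definition~\ref{def_relators_arrow}, including the sign conventions on $(\sign(i),\sign(j),\sign(k))$ and the placement of heads and tails in $S,T,U,V$. The type~I and type~II cases are essentially the calculation already written out in the text, and the type~III cases are conceptually the same but require a careful enumeration; this bookkeeping is the principal routine content of the proof.
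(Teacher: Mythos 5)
Your proposal is correct and follows essentially the same route as the paper: stratify $\sub(G^*)$ by the number of distinguished letters, factor each sub-Gauss word as a background piece times a local pattern, reassemble the difference $\cFsum(D_K)-\cFsum(D'_K)$ into a sum of relators indexed by the background sub-words $z^*_0$, and invoke Proposition~\ref{relator_prop*} to pass from the projected relators $\check{O}_{b,d}(\cRep)$ to all of $\cRep$. The paper simply carries out the bookkeeping you defer (the explicit maps $z^*_1, z^*_2, z^*_3, z^*_{2a}, z^*_{2b}, z^*_{2c}$ and their primed counterparts for the type~II and type~III cases), and you correctly flag the one genuine subtlety, namely that $[[z^*_1(z^*_0)]]+[[{z'}^*_1(z^*_0)]]+[[z^*_2(z^*_0)]]$ need not lie in $\check{O}_{b,d}(\cRep)$, which is exactly the point the paper resolves via Proposition~\ref{relator_prop*}.
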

\begin{remark}
Invariants for (long) virtual knots in Theorem~\ref{gg_thm2} imply Goussarov-Polyak-Viro finite type invariants of any degree.      
\end{remark}

Before starting the proof, we note that we use the notion of \emph{sub-words} (Definition~\ref{sub_w}).  
\begin{definition}\label{sub_w}
For a word $w$, a word $u$ of length $q$ is called a \emph{sub-word} of $w$ if there exists an integer $p$ ($q \le p \le n$) such that $u(j)$ $=$ $w(n-p+j)$ ($1 \le j \le q$).
\end{definition}

$\bullet$ (Proof of Theorem~\ref{gg_thm2} for the case $\epsilon_1 =1$.) 
Let $D_K$ and $D'_K$ be two oriented knot diagrams where $D_K$ is related to $D'_K$ by a single $\mathcal{RI}$.  Hence, there exist an oriented letter $i$ and an oriented Gauss word $S$ such that $AD_{D_K}$ $=$ $[[Si\bar{i}T]]$ or $[[S\bar{i}iT]]$ and $AD_{D'_K}$ $=$ $[[ST]]$.  Since the arguments are essentially the same, we may suppose that $T=\emptyset$ and  $AD_{D_K}$ $=$ $[[Si\bar{i}T]]$, i.e, $AD_{D_K}$ $=$ $[[Si\bar{i}]]$ and $AD_{D'_K}$ $=$ $[[S]]$.  
As we observed in Section~\ref{sec2}, we have
\begin{align*}
\cFsum(D_K) &-\cFsum(D'_K) \\
&= \cNsum \sum_{z_0 \in \sub^{(0)}(G^*)} \cat \left( [[z_0 i\bar{i}]] \right).
\end{align*}
By the assumption of this case, for each $z_0 \in \sub^{(0)}(G^*)$, 
\[
\cNsum \cat \left( [[z_0 i\bar{i}]] \right)=0
\]
and this shows that 
\[
\cFsum(D_K) = \cFsum(D'_K).  
\]

$\bullet$ (Proof of Theorem~\ref{gg_thm2} for the case $\epsilon_2 =1$.)  Let $D_K$ and $D'_K$ be two oriented knot diagrams where $D_K$ is related to $D'_K$ by a single strong~$\mathcal{RI\!I}$, hence, there exist two oriented Gauss words $G^*=SijT\bar{j}\, \bar{i}U$ and ${{G'}^*}=STU$ corresponding to $D_K$ and $D'_K$, respectively, i.e., $AD_{D_K}$ $=$ $[[SijT\bar{j}\,\bar{i}U]]$  and $AD_{D'_K}$ $=$ $[[STU]]$.  Since the arguments are essentially the same, we may suppose that $U=\emptyset$ in the following.

By (\ref{tilde_x*}), (\ref{sub*}), and (\ref{subx*}) in Section~\ref{sec2}, we have (note that $\sub^{(3)} (G^*)$ $=$ $\emptyset$): 

\begin{align*}
&\cFsum (D_K) = \scFsum \left(\sum_{z^* \in \sub(G^*)} \tilde{x}^*_i (z^*) \right) \\
&= \scFsum \left(  \sum_{z^*_0 \in \sub^{(0)}(G^*)} \tilde{x}^*_i (z^*_0) \right)
+ \cNsum \sum_{z^*_{12} \in \sub^{(1)}(G^*) \cup \sub^{(2)} (G^*)} \alpha_i \tilde{x}^*_i (z^*_{12})\\
&= \scFsum \left(  \sum_{{z'}^* \in \sub({G'}^*)} \tilde{x}^*_i ({z'}^*) \right)
+ \cNsum \sum_{z^*_{12} \in \sub^{(1)}(G^*) \cup \sub^{(2)} (G^*)} \alpha_i \tilde{x}^*_i (z^*_{12})
\end{align*}
($\because$ $\sub^{(0)}(G^*)$ is identified with $\sub({G'}^*)$).

Let $z^*_0 \in \sub^{(0)}(G^*)$.
Note that since $G^*$ is an oriented Gauss word $z^*_0$ uniquely admits a decomposition into two  sub-words, which are sub-words on $S$ and $T$.
Let $\sigma(z^*_0)$ be the sub-word of $S$ and $\tau(z^*_0)$ the sub-word of $T$ satisfying $z^*_0$ $=$ $\sigma(z^*_0)\tau(z^*_0)$.  By using these notations, we define maps
\begin{align*}
z^*_2: &\sub^{(0)}(G^*) \to \sub^{(2)}(G^*); z^*_2 (z^*_0) = \sigma(z^*_0)ij \tau(z^*_0)\bar{j}\,\bar{i},\\
z^*_1: &\sub^{(0)}(G^*) \to \sub^{(1)}(G^*); z^*_1 (z^*_0) = \sigma(z^*_0)i\tau(z^*_0)\bar{i} ~{\textrm{, and}}\\
{z'}^*_1: &\sub^{(0)}(G^*) \to \sub^{(1)}(G^*); {z'}^*_1 (z^*_0) = \sigma(z^*_0)j\tau(z^*_0)\bar{j}.    
\end{align*}
Then, it is easy to see that $\sub^{(1)} (G^*) \cup \sub^{(2)} (G^*)$ admits a decomposition 
\begin{align*}
&\sub^{(1)} (G^*) \cup \sub^{(2)} (G^*)\\ 
&=  \{ z^*_1 (z^*_0) ~|~ \forall z^*_0 \in \sub^{(0)} (G^*) \} \amalg \{ {z'}^*_1 (z^*_0) ~|~ \forall z^*_0 \in \sub^{(0)} (G^*) \} \\
& \qquad\qquad\qquad\qquad\qquad\qquad\qquad\qquad \amalg \{ z^*_2 (z^*_0) ~|~ \forall z^*_0 \in \sub^{(0)}(G^*) \}.  
\end{align*}
These notations together with the above give: 
\begin{align*}
&\cFsum(D_K) = \cFsum (D'_K) \\
&\qquad\qquad\qquad + \cNsum \sum_{z^*_0 \in \sub^{(0)}(G^*)} \cat (z^*_2 (z^*_0) + z^*_1 (z^*_0) + {z'}^*_1 (z^*_0))\\
&= \cFsum (D'_K) \\
&\qquad + \cNsum \sum_{z^*_0 \in \sub^{(0)}(G^*)} \cat ([[z^*_2 (z^*_0)]] + [[z^*_1 (z^*_0)]] + [[{z'}^*_1 (z^*_0)]]).
\end{align*}
Here, note that by the condition for the case $\epsilon_2$ $=$ $1$, for any for any $z^*_0 \in \sub^{(0)}(G^*)$, 
\[
\cNsum \cat ([[z^*_2 (z^*_0)]] + [[z^*_1 (z^*_0)]] + [[{z'}^*_1 (z^*_0)]]) = 0.
\]
Here, one may think that
\[
\tilde{x}^*_i ([[z^*_1 (z^*_0)]] + [[{z'}^*_1 (z^*_0)]] + [[z^*_2 (z^*_0)]]) = 0
\]
for each $z^*_0$ by the condition of the statement (for the case $\epsilon_2$ $=$ $1$).   
However, the condition means that the equation holds for each $r^* \in \check{O}_{b, d}(\check{R}_{01000})$, and we note that $[[z^*_1 (z^*_0)]]$ $+$ $[[{z'}^*_1 (z^*_0)]]$ $+$ $[[z^*_2 (z^*_0)]]$ may not be an element of $\check{O}_{b, d}(\check{R}_{01000})$ (possibly $\check{f}^{-1}([[z^*_2 (z^*_0)]]) > \check{n}_d$ or $\check{f}^{-1}([[z^*_1 (z^*_0)]]) \le \check{n}_{b-1}$), and $\check{O}_{b, d} ([[z^*_1 (z^*_0)]] + [[{z'}^*_1 (z^*_0)]] + [[z^*_2 (z^*_0)]])$ $\neq 0$.  
However, even when this is the case we see that 
\[
\tilde{x}^*_i ([[z^*_1 (z^*_0)]] + [[{z'}^*_1 (z^*_0)]] + [[z^*_2 (z^*_0)]]) = 0
\]
by Proposition~\ref{relator_prop*}. 

Thus,
\begin{align*}
\cFsum (D_K) = \cFsum (D'_K).
\end{align*}

$\bullet$ (Proof of Theorem~\ref{gg_thm2} for the case $\epsilon_3 =1$.)  
Since the arguments are essentially the same as that of the case $\epsilon_2$ $=$ $1$, we omit this proof. 

$\bullet$ (Proof of Theorem~\ref{gg_thm2} for the case $\epsilon_4 =1$.)  Let $D_K$ and $D'_K$ be two oriented knot diagrams where $D_K$ is related to $D'_K$ by a single strong~$\mathcal{RI\!I\!I}$, hence, there exist two Gauss words $G^*=SijT\bar{k}\,\bar{i}U\bar{j}kV$ ($SkjTi\bar{k}U\bar{j}\,\bar{i}V$, resp.) and ${{G'}^*}=SjiT\bar{i}\,\bar{k}Uk\bar{j}V$ ($SjkT\bar{k}iU\bar{i}\,\bar{j}V$, resp.) corresponding to $D_K$ and $D'_K$, respectively, i.e., $AD_{D_K}$ $=$ $[[SijT\bar{k}\,\bar{i}U\bar{j}kV]]$ ($[[SkjTi\bar{k}U\bar{j}\,\bar{i}V]]$, resp.) and $AD_{D'_K}$ $=$ $[[SjiT\bar{i}\,\bar{k}Uk\bar{j}V]]$ ($[[SjkT\bar{k}iU\bar{i}\,\bar{j}V]]$, resp.). 

First, we suppose that $AD_{D_K}$ $=$ $[[SijT\bar{k}\,\bar{i}U\bar{j}kV]]$.  Since the arguments are essentially the same, we may suppose that $V=\emptyset$ in the following.  
 
By (\ref{tilde_x*}), (\ref{sub*}), and (\ref{subx*}) in Section~\ref{sec2}, we have: 
\begin{align*}
&\cFsum (D_K) = \scFsum \left( \sum_{z^* \in \sub(G^*)} \tilde{x}^*_i (z^*) \right)  \\
&= \scFsum \left( \sum_{z^*_{01} \in \sub^{(0)}(G^*) \cup \sub^{(1)}(G^*)} \tilde{x}^*_i (z^*_{01}) 
+ \sum_{z^*_{23} \in \sub^{(2)}(G^*) \cup \sub^{(3)}(G^*)} \tilde{x}^*_i (z^*_{23})
 \right).  
\end{align*}
and
\begin{align*}
&\cFsum (D'_K) = \scFsum \left( \sum_{{z'}^* \in \sub({G'}^*)} \tilde{x}^*_i ({z'}^*) \right)  \\
&= \scFsum \left( \sum_{{z'}^*_{01} \in \sub^{(0)}(G^*) \cup \sub^{(1)}(G^*)} \tilde{x}^*_i ({z'}^*_{01}) 
+ \sum_{z^*_{23} \in \sub^{(2)}(G^*) \cup \sub^{(3)}(G^*)} \tilde{x}^*_i ({z'}^*_{23})
 \right).
\end{align*}

Because $\sub^{(0)}(G^*)$ ($\sub^{(1)}(G^*)$, resp.) is naturally identified with $\sub^{(0)}({G'}^*)$ ($\sub^{(1)}({G'}^*)$, resp.), the above equations show: 
\begin{align*}
&\cFsum (D_K) - \cFsum (D'_K) \\ &= \cNsum \sum_{z^*_{23} \in \sub^{(2)}(G^*) \cup \sub^{(3)} (G^*)} \cat (z^*) \\ &\qquad\qquad\qquad\qquad\qquad - \cNsum \sum_{{z'}^*_{23} \in \sub^{(2)}({G'}^*) \cup \sub^{(3)} ({G'}^*)} \cat ({z'}^*).   
\end{align*}

Let $z^*_0 \in \sub^{(0)}(G^*)$ that is identified with $\sub^{(0)}({G'}^*)$.  Note that since $G^*$ is an oriented Gauss word $z^*_0$ uniquely admits a decomposition into three sub-words, which are sub-words on $S$, $T$, and $U$.  
Let $\sigma(z^*_0)$ be the sub-word of $S$, $\tau(z^*_0)$ the sub-word of $T$, and $\mu(z^*_0)$ the sub-word of $U$, where $z^*_0$ $=$ $\sigma(z^*_0)\tau(z^*_0)\mu(z^*_0)$.  Let  
\begin{align*}
z^*_3: &\sub^{(0)}(G^*) \to \sub^{(3)}(G^*); z^*_3 (z^*_0) = \sigma(z^*_0) ij \tau(z^*_0) \bar{k}\,\bar{i} \mu(z^*_0) \bar{j}k,\\ 
z^*_{2a}: &\sub^{(0)}(G^*) \to \sub^{(2)}(G^*); z^*_{2a} (z^*_0) = \sigma(z^*_0) ij \tau(z^*_0) \bar{i} \mu(z^*_0) \bar{j}, \\
z^*_{2b}: &\sub^{(0)}(G^*) \to \sub^{(2)}(G^*); z^*_{2b} (z^*_0) = \sigma(z^*_0) i \tau(z^*_0) \bar{k}\,\bar{i} \mu(z^*_0) k ,~{\text{and}}~\\
z^*_{2c}: &\sub^{(0)}(G^*) \to \sub^{(2)}(G^*); z^*_{2c} (z^*_0) = \sigma(z^*_0) j \tau(z^*_0) \bar{k} \mu(z^*_0) \bar{j}k.
\end{align*}
Similarly, we define maps  
\begin{align*}
{z'}^*_3: &\sub^{(0)}({G'}^*) \to \sub^{(3)}({G'}^*); {z'}^*_3 (z^*_0) = \sigma(z^*_0) ji 
 \tau(z^*_0) \bar{i}\,\bar{k} \mu(z^*_0) k\bar{j},\\
{z'}^*_{2a}: &\sub^{(0)}({G'}^*) \to \sub^{(2)}({G'}^*); {z'}^*_{2a} (z^*_0) = \sigma(z^*_0) ji \tau(z^*_0) \bar{i} \mu(z^*_0) \bar{j}, \\
{z'}^*_{2b}: &\sub^{(0)}({G'}^*) \to \sub^{(2)}({G'}^*); {z'}^*_{2b} (z^*_0) = \sigma(z^*_0) i \tau(z^*_0) \bar{i}\,\bar{k} \mu(z^*_0) k,~{\text{and}}~ \\
{z'}^*_{2c}: &\sub^{(0)}({G'}^*) \to \sub^{(2)}({G'}^*); {z'}^*_{2c} (z^*_0) = \sigma(z^*_0) j \tau(z^*_0) \bar{k} \mu(z^*_0) k\bar{j}.
\end{align*}
Here, it is easy to see that $\sub^{(2)} (G^*) \cup \sub^{(3)} (G^*)$ admits decompositions 
\begin{align*}
&\sub^{(2)} (G^*) \cup \sub^{(3)} (G^*) \\
&= \{ z^*_3 (z^*_0) ~|~ z^*_0 \in \sub^{(0)}(G^*) \}
\amalg \{ z^*_{2a} (z^*_0) ~|~ z^*_0 \in \sub^{(0)} (G^*) \} \\
& \amalg \{ z^*_{2b} (z^*_0) ~|~ z^*_0 \in \sub^{(0)} (G^*) \}  
\amalg \{ z^*_{2c} (z^*_0) ~|~ z^*_0 \in \sub^{(0)}(G^*) \}
\end{align*}
and 
\begin{align*}
& \sub^{(2)}({G'}^*) \cup \sub^{(3)} ({G'}^*) \\
&= \{ {z'}^*_3 (z^*_0) ~|~ z^*_0 \in \sub^{(0)}({G}^*) \}
\amalg \{ {z'}^*_{2a} (z^*_0) ~|~ z^*_0 \in \sub^{(0)} ({G}^*) \} \\
& \amalg \{ {z'}^*_{2b} (z^*_0) ~|~ z^*_0 \in \sub^{(0)} ({G}^*) \} 
\amalg \{ {z'}^*_{2c} (z^*_0) ~|~ z^*_0 \in \sub^{(0)} ({G}^*) \}.
\end{align*}
Under these notations, we have: 
\begin{align}\label{rel_eq}
&\cFsum (D_K) - \cFsum (D'_K)
\\ \nonumber
&= \sum_{z^*_0 \in \sub^{(0)}(G^*)}  \caFsum \Big( (z^*_3 (z^*_0) + z^*_{2a} (z^*_0) + z^*_{2b} (z^*_0) + z^*_{2c} (z^*_0))\\ \nonumber
&- ({z'}^*_3(z^*_0) + {z'}^*_{2a}(z^*_0) + {z'}^*_{2b}(z^*_0) + {z'}^*_{2c}(z^*_0)) \Big) \\ \nonumber
&= \sum_{z^*_0 \in \sub^{(0)}(G^*)} \caFsum \Big(  ([[z^*_3 (z^*_0)]] + [[z^*_{2a} (z^*_0)]] + [[z^*_{2b} (z^*_0)]] + [[z^*_{2c} (z^*_0)]])\\ 
& - ([[{z'}^*_3(z^*_0)]] + [[{z'}^*_{2a}(z^*_0)]] + [[{z'}^*_{2b}(z^*_0)]] + [[{z'}^*_{2c}(z^*_0)]]) \Big). \nonumber
\end{align}
Here, note that 
\begin{align*}
&([[z^*_3 (z^*_0)]] + [[z^*_{2a} (z^*_0)]] + [[z^*_{2b} (z^*_0)]] + [[z^*_{2c} (z^*_0)]]) - ([[{z'}^*_3(z^*_0)]] + [[{z'}^*_{2a}(z^*_0)]] \\
&\qquad\qquad\qquad\qquad\qquad\quad\qquad\qquad\qquad\qquad + [[{z'}^*_{2b}(z^*_0)]] + [[{z'}^*_{2c}(z^*_0)]]) \in \check{R}_{00010}.  
\end{align*}
Thus, by the assumption of Case $\epsilon_4$ $=$ $1$ and by Proposition~\ref{relator_prop*} (cf.~Proof of the case $\epsilon_2$ $=$ $1$), for each $z^*_0$, 
\begin{align*}
&\caFsum  \Big(  ([[z^*_3 (z^*_0)]] + [[z^*_{2a} (z^*_0)]] + [[z^*_{2b} (z^*_0)]] + [[z^*_{2c} (z^*_0)]]) \\
&\qquad\qquad\qquad  - ([[{z'}^*_3(z^*_0)]] + [[{z'}^*_{2a}(z^*_0)]] + [[{z'}^*_{2b}(z^*_0)]] + [[{z'}^*_{2c}(z^*_0)]]) \Big)\\
&\qquad = 0.  
\end{align*}
They show that  
\[ \cFsum (D_K) = \cFsum (D'_K).
\]

The proof for the case when $AD_{D_K}$ $=$ $[[SkjTi\bar{k}U\bar{j}\,\bar{i}V]]$ can be carried out as above, and omit it.

$\bullet$ (Proof of Theorem~\ref{gg_thm2} for the case $\epsilon_5 =1$.)  
Since the arguments are essentially the same as that of the case $\epsilon_4$ $=$ $1$, we omit this proof.   

$\hfill\Box$

\begin{definition}[irreducible arrow diagram]
Let $x^*$ be an arrow diagram.  An arrow $\alpha$ in $x^*$ is said to be an \emph{isolated arrow} if $\alpha$ does not intersect any other arrow.  If $x^*$ has an isolated arrow, $x^*$ is called \emph{reducible} and otherwise, $x^*$ is called \emph{irreducible}.           
The set of the irreducible arrow diagrams is denoted by $\check{\irr}$.  Let $\cirri$ $=$ $\{ i ~|~ \cineq, x^*_i \in \check{\irr} \}$. 
\end{definition}

If we consider the function of the form $\cFsumIrr$ for $\cFsum$ in Theorem~\ref{gg_thm2}, we have:  
\begin{corollary}\label{g_thm2}
Let $b$ and $d$ $(2 \le b \le d)$ be integers and let $\check{G}_{\le d}$, $\{ x^*_i \}_{i \in \mathbb{N}}$, $\mathbb{Z}[\check{G}_{< \infty}]$, $\check{n}_d$ $=$ $|\check{G}_{\le d}|$, $\check{G}_{b, d}$ $=$ $\{ x^*_i \}_{\check{n}_{b-1} +1 \le i \le \check{n}_{d}}$ and $\check{O}_{b, d}(\cRep)$ be as in Section~\ref{sec2}.     
Let $\cFsumIrr$ and $\caFsumIrr$ be functions as in Definition~\ref{def_xiv}.       
We arbitrarily choose $(\epsilon_2, \epsilon_3, \epsilon_4, \epsilon_5) \in \{0, 1\}^4$ and fix it.   Then, if $\displaystyle \caFsum(r^*)=0$ for each $r^* \in \check{O}_{b, d}(\check{R}_{0 \epsilon_2 \epsilon_3 \epsilon_4 \epsilon_5})$, $\displaystyle \cFsumIrr$ is an integer-valued invariant under $\mathcal{RI}$ and the Reidemeister moves corresponding to $\epsilon_j =1$.  
If $(\epsilon_2, \epsilon_3, \epsilon_4, \epsilon_5)$ is $(0, 1, 1, 0)$, $(1, 0, 0, 1)$, or $(1, 0, 1, 0)$, $\displaystyle \cFsum$ is an integer-valued invariant of $($long$)$ virtual knots.  

\end{corollary}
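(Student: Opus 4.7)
The plan is to deduce Corollary~\ref{g_thm2} from Theorem~\ref{gg_thm2} by exploiting a single structural fact: if $x^*_i$ is an irreducible arrow diagram, then $\tilde{x}^*_i(y^*) = 0$ whenever $y^*$ contains an isolated arrow, since in that case the cyclic equivalence class $[[y^*]]$ is reducible and therefore cannot equal $x^*_i$. Restricting the sum to indices $i \in \cirri$ thus kills exactly the contributions an $\mathcal{RI}$ move would produce, so RI-invariance comes for free and only the conditions from Types $(\check{\s})$--$(\check{\www})$ remain to be checked.

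For the $\mathcal{RI}$ step I would repeat the opening computation from the proof of Theorem~\ref{gg_thm2} in the case $\epsilon_1 = 1$. With $AD_{D_K} = [[Si\bar{i}]]$ and $AD_{D'_K} = [[S]]$, the difference reduces to
\[
\cFsumIrr(D_K) - \cFsumIrr(D'_K) = \sum_{i \in \cirri}\; \sum_{z^*_0 \in \sub^{(0)}(G^*)} \alpha_i\, \tilde{x}^*_i\bigl([[z^*_0 i\bar{i}]]\bigr).
\]
In each summand, the arrow coming from the pair $i,\bar{i}$ sits between two consecutive letters of the word $z^*_0 i\bar{i}$, so it intersects no arrow of $z^*_0$; hence $[[z^*_0 i\bar{i}]]$ is reducible. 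Since $x^*_i$ is irreducible for $i \in \cirri$, the value $\tilde{x}^*_i([[z^*_0 i\bar{i}]])$ is identically $0$, and the entire double sum vanishes without any hypothesis on the $\alpha_i$.

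For the four remaining types of Reidemeister moves, the argument in the proof of Theorem~\ref{gg_thm2} carries over with only the notational change of replacing $\scFsum$ by $\sum_{i \in \cirri}$: in each case the difference $\cFsumIrr(D_K) - \cFsumIrr(D'_K)$ reduces to a linear combination of values of $\tilde{x}^*_i$ on an element of the appropriate relator set, and the hypothesis $\caFsum(r^*) = 0$ on $r^* \in \check{O}_{b,d}(\check{R}_{0\epsilon_2\epsilon_3\epsilon_4\epsilon_5})$, together with Proposition~\ref{relator_prop*}, makes each contribution vanish. The only technicality to keep in mind is that the hypothesis is stated for $\caFsum$ rather than $\caFsumIrr$; this is not a real obstacle, since one may simply set $\alpha_i = 0$ for $i \notin \cirri$, so the two hypotheses coincide on the indices that actually appear.

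I expect no substantive obstacle: the main theorem does all the hard bookkeeping of decomposing $\sub(G^*)$ into $\sub^{(m)}(G^*)$ and tracking the corresponding relator. The content of the corollary is entirely captured by the elementary observation that an RI move adjoins or removes an isolated arrow, which the irreducibility restriction renders invisible to every $\tilde{x}^*_i$ with $i \in \cirri$.
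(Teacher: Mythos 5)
Your proposal is correct and follows essentially the same route as the paper: the paper also deduces the corollary from Theorem~\ref{gg_thm2} by observing that every Type $(\check{\ii})$ relator contains an isolated arrow, so $\tilde{x}^*_i$ vanishes on it for each irreducible $x^*_i$, making the $\epsilon_1=1$ hypothesis automatic. The only cosmetic difference is that the paper verifies the relator condition $\caFsumIrr(r^*)=0$ for $r^*\in\check{O}_{b,d}(\check{R}_{10000})$ directly and then cites the theorem, whereas you unfold the $\mathcal{RI}$ computation inside the theorem's proof; both hinge on the identical observation, and your remark about reconciling $\caFsum$ with $\caFsumIrr$ by setting $\alpha_i=0$ for $i\notin\cirri$ is exactly the reading the paper intends.
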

\begin{proof}[Proof of Corollary~\ref{g_thm2} from Theorem~\ref{gg_thm2}]
By Theorem~\ref{gg_thm2}, it is enough to show
\begin{equation}\label{eq2_color}
\caFsumIrr (r^*)=0  \quad (\forall r^* \in \check{O}_{b, d}(\check{R}_{10000}))
\end{equation}
for a proof of Corollary~\ref{g_thm2}.  We first note that if $x^*_i \in \check{\irr}$, then $x^*_i$ has no isolated arrows.  On the other hand, let $r^* \in \check{O}_{b, d}(\check{R}_{10000})$, i.e., there exist an oriented Gauss word $S$ and a letter $j$ such that $r^* =[[Sj\bar{j}T]]$ or $[[S\bar{j}jT]]$.  Then, the arrow corresponding to $j$ is isolated.  
They show that $\tilde{x}^*_i (r^*)=0$.  Then, this shows that (\ref{eq2_color}) holds.    
This fact together with Theorem~\ref{gg_thm2} immediately gives Corollary~\ref{g_thm2}.  

\end{proof}



\begin{definition}[connected arrow diagram]\label{defconnected}
Let $v^*$ be an oriented Gauss word of length $2m$ and $w^*$ an oriented Gauss word of length $2n$ satisfying that $v^*(\hat{2m}) \cap w^*(\hat{2n})$ $=$ $\emptyset$.  
Then, for $v^*$ and $w^*$, we define the Gauss word of length $2(m+n)$, denoted by $v^*w^*$, by $v^*w^*(i)=v^*(i)$ ($1 \le i \le 2m$) and $v^*w^*(2m+i)=w^*(i)$ ($1 \le i \le 2n$).  
An arrow diagram is a \emph{connected arrow diagram} if it is not an arrow diagram satisfying that there exist non-empty oriented Gauss words $v^*$ and $w^*$ such that $[[z^*]]$ $=$ $[[v^*w^*]]$.    
Then the set of the connected arrow diagrams is denoted by $\check{\conn}$ and $\{ i ~|~ \cineq, x^*_i \in \check{\conn} \}$ is denoted by $\cconni$.
\end{definition}

\begin{definition}\label{def_conn}
Let $K$ and $K'$ be two knots (long virtual knots, resp.).  A connected sum of $K$ and $K'$ is denoted by $K \sharp K'$.  
Suppose that a function $v$ is an invariant of knots (long virtual knots, resp.).  If $v$ is additive with respect to a connected sum $K \sharp K'$ of $K$ and $K'$, i.e., $v(K \sharp K')$ $=$ $v(K)+v(K')$, then we say that $v$ is \emph
{additive}.  
\end{definition}
If we consider the function of the form $\cFsumConn$ for $\cFsum$ in Theorem~\ref{gg_thm2}, we have:    
\begin{corollary}\label{cor3b}
Let $b$ and $d$ $(2 \le b \le d)$ be integers, and let $\check{G}_{\le d}$, $\{ x^*_i \}_{i \in \mathbb{N}}$, $\mathbb{Z}[\check{G}_{< \infty}]$, $\check{n}_d$ $=$ $|\check{G}_{\le d}|$, $\check{G}_{b, d}$ $=$ $\{ x^*_i \}_{\check{n}_{b-1} +1 \le i \le \check{n}_{d}}$, and $\check{O}_{b, d}(\cRep)$ be as in Section~\ref{sec2}.     
Let $\cFsumConn$ and $\caFsumConn$ be functions as in Definition~\ref{def_xiv}.  We arbitrarily choose $(\epsilon_2, \epsilon_3, \epsilon_4, \epsilon_5) \in \{0, 1\}^5$ and fix it.  Then, if 
$\displaystyle \caFsum(r^*)=0$ for each $r^* \in \check{O}_{b, d}(\check{R}_{0 \epsilon_2 \epsilon_3 \epsilon_4 \epsilon_5})$, 
then, 
$\displaystyle \cFsumConn$
is an integer-valued  additive invariant under $\mathcal{RI}$ and the Reidemeister moves corresponding to $\epsilon_j =1$.  
If $(\epsilon_1, \epsilon_2, \epsilon_3, \epsilon_4, \epsilon_5)$ $=$ $(1, 0, 1, 1, 0)$, $(1, 1, 0, 0, 1)$, or $(1, 1, 0, 1, 0)$, 
 $\displaystyle \cFsum$ is an integer-valued additive  invariant of $($long$)$ virtual knots.   

\end{corollary}
\begin{proof}[Proof of Corollary~\ref{cor3b} from Theorem~\ref{gg_thm2}]
Since $b \ge 2$ and $i \ge \check{n}_{b-1} +1$, each $x^*_i$ consists of more than one arrows, i.e., $x^*_i \in \check{\irr}$ (see the note preceding Definition~\ref{def_conn}). 
Then, the former part of the statement is proved in the same argument as {\it{Proof of Corollary~\ref{g_thm2} from Theorem~\ref{gg_thm2}}}.

Next, by using geometric observations as in Example~\ref{example2}, it is clear that if $x^*_i \in \conn$, then, there exists a non-connected arrow diagram $AD_{D(K \sharp K')}$ consisting of copies of $AD_{D_K}$ and $AD_{D_{K'}}$, 
\begin{equation}\label{eq4a}
{x^*_i}(AD_{D(K \sharp K')}) = {x^*_i}(AD_{D_K}) + {x^*_i}(AD_{D_{K'}}).  
\end{equation}
This fact implies that $\sum_{\substack{ n_{b-1} +1 \le i \le n_d \\ x_i \in \conn } } \alpha_i x_i$ is additive.  

\end{proof}

\subsection{Invariances for Gauss diagram formulas  for classical knots}\label{redSec}
In this section, 
we introduce a framework giving Gauss diagram formulas of  classical knots; we have Proposition~\ref{prop2}.   
{\color{black}{  
Although one may feel the formulation of this truncated  reduced Polyak algebra is complicated, the idea is simple as follows.  
We firstly choose a module generated by finitely many Gauss diagrams.  Secondly, this module is divided by relations corresponding to Reidemeister moves and the linking number relation (\cite[Theorem~5]{PV}, \cite[Section~4.1]{ostlund}): 
\begin{equation}
\langle \vone, \cdot \rangle =  \langle \voneA, \cdot \rangle.  
\end{equation} 
}}
{\color{black}{
Then, intuitively, Definition~\ref{mirror_dfn} defines a relator, written as
\[
\vone - \voneA, 
\]
which corresponds to  
\[
\langle \vone, \cdot \rangle -   \langle \voneA, \cdot \rangle = 0.
\]
\begin{definition}[mirroring pair]\label{mirror_dfn}
Let $S$, $T$, $U$, and $V$ be sub-words.  
Let $r^\epsilon_{ST}$ be a Type~($\check{\sss}$) relator having a single arrow with the sign $\epsilon$ from $T$ to $U$ ($\epsilon$ $=$ $+, -$). 
Let $\hat{r}^\epsilon_{TU}$ $:=$ $r^\epsilon_{UT}$ (e.g.  Fig.~\ref{mir}).  
Further, in general, let $(\cdot, \cdot)$ $=$ $(S, T)$, $(T, S)$, $(S, U)$, $(U, S)$, $(T, U)$ $(U, T)$, $(T, V)$ $(V, T)$, $(U, V)$, or $(V, U)$.
Then the pair $(r^\epsilon_{\cdot \cdot}, \hat{r}^\epsilon_{\cdot \cdot})$ is called the \emph{mirroring pair}.  
Let $(r^\epsilon_{\cdot \cdot}, \hat{r}^\epsilon_{\cdot \cdot})$ be a mirroring pair.  For $\check{O}_{3, 3}(\cR^{\min}_{00010})$, $\reduced(\check{O}_{3, 3}(\cR^{\min}_{00010}))$ is the set of elements, each of which consists of relators $r^\epsilon_{\cdot \cdot} + \hat{r}^\epsilon_{\cdot \cdot}$.    
\end{definition}
}} 
By definition, each mirroring pair consists of $16$ term formulas since every relator has $8$ terms.  
\begin{figure}[h!]
\includegraphics[width=12cm]{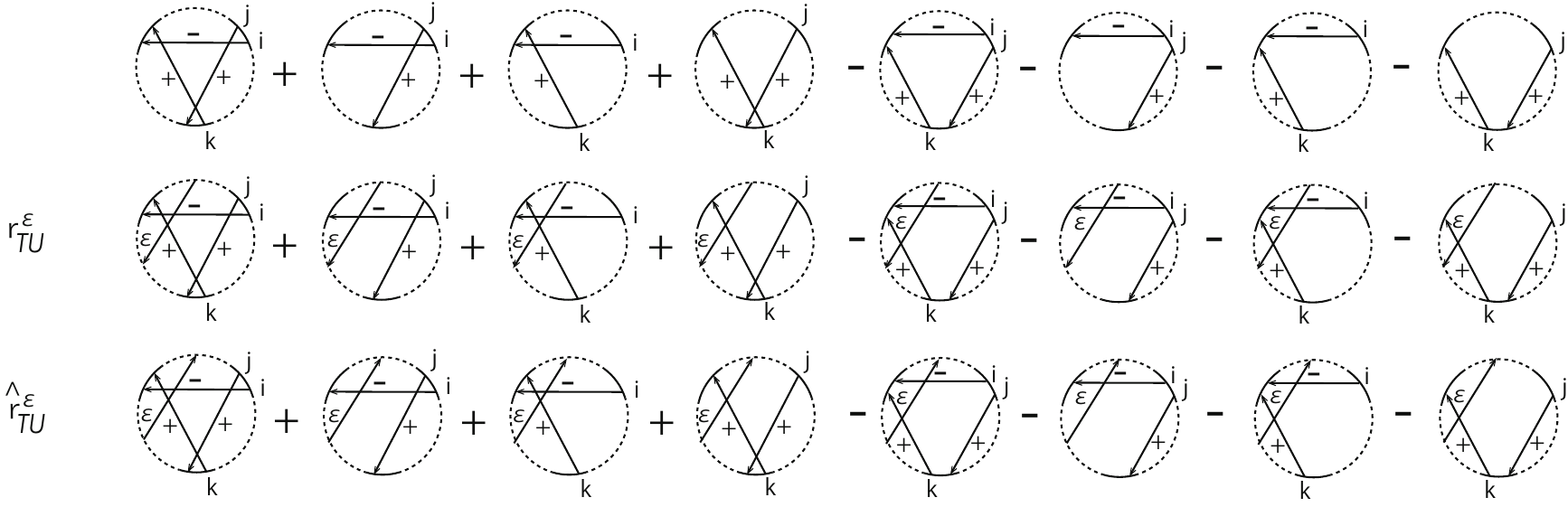}
\caption{The relators of type  ($\sss$) are as in the first line and its mirror image.  The second line is an example $r^{\epsilon}_{TU}$.  $\hat{r}^{\epsilon}_{TU}$ is in the third line  corresponding to ${r}^{\epsilon}_{UT}$.}\label{mir}
\end{figure}

\begin{definition}[truncated reduced Polyak algebra]\label{reducPA}
The \emph{reduced} Polyak algebra is defined as the quotient module $\mathbb{\mathbb{Z}}[\check{G}_{< \infty}] / \reduced(\cR^{\min}_{10110})$. Let $n$ be an integer ($n \ge 2$).   
The \emph{truncated reduced Polyak algebra} is $\mathbb{\mathbb{Z}}[\check{G}_{\le n}] / \reduced(\check{O}_{2, n}(\cR^{\min}_{10110}))$ that is the quotient module of the reduced Polyak algebra and is denoted by $\reduced(\mathcal{P}_n)$.    
\end{definition}
\begin{lemma}\label{lem1}
Let $D_K$ and $D'_K$ be two oriented classical knot diagrams between 
a single strong~$\mathcal{RI\!I\!I}$, hence, there exist two Gauss words $G^*=SijT\bar{k}\,\bar{i}U\bar{j}k$ $(SkjTi\bar{k}U\bar{j}\,\bar{i}$, resp.$)$ and ${{G'}^*}=SjiT\bar{i}\,\bar{k}Uk\bar{j}$ $(SjkT\bar{k}iU\bar{i}\,\bar{j}$, resp.$)$ corresponding to $D_K$ and $D'_K$, respectively, i.e., $AD_{D_K}$ $=$ $[[SijT\bar{k}\,\bar{i}U\bar{j}k]]$ $([[SkjTi\bar{k}U\bar{j}\,\bar{i}]]$, resp.$)$ and $AD_{D'_K}$ $=$ $[[SjiT\bar{i}\,\bar{k}Uk\bar{j}]]$ $([[SjkT\bar{k}iU\bar{i}\,\bar{j}]]$, resp.$)$.  For $L$ $=$ $K$ or $K'$, let $S(L)$ be $S$ of $L$ and $T(L)$ $T$ of $L$.  Then, if there exists an arrow $\lambda$ oriented  from $S(L)$ to $T(L)$ $(S(L)$ to $T(L)$,~resp.$)$, we say that $\lambda \in S(L)$, $\bar{\lambda} \in T(L)$ $(\bar{\lambda} \in S(L)$, $\lambda \in T(L), resp.)$.  
 
Then, 
\[
\sum_{\lambda \in S(L),~\bar{\lambda} \in T(L)} \sign(\lambda) = \sum_{\lambda \in T(L),~ \bar{\lambda} \in S(L)} \sign(\lambda).  
\]
The same statement as that of   the pair ($S$, $T$) holds for ($T$, $U$) or ($S$, $U$).    
\end{lemma}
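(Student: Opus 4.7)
The plan is to interpret both sums as contributions to the linking number of two closed curves obtained by closing up the arcs of $D_L$ determined by $S$ and $T$, and then to invoke symmetry of the linking number. This step is available precisely because $D_L$ is classical (i.e., genuinely planar) rather than virtual.

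First I would identify the sub-words $S=S(L)$ and $T=T(L)$ with two disjoint oriented sub-arcs $\alpha_S$ and $\alpha_T$ of the knot diagram $D_L$; they are disjoint because in each of the Gauss words $SijT\bar k\bar iU\bar jk$ and $SkjTi\bar kU\bar j\bar i$ the regions $S$ and $T$ are separated by at least one of the local letters $i,j,k$. Under the convention that an unbarred (barred) occurrence of a letter marks the over- (under-) crossing point and is the tail (head) of the corresponding arrow, the left-hand side counts, with signs, those crossings of $D_L$ at which $\alpha_S$ passes over $\alpha_T$, while the right-hand side counts those at which $\alpha_T$ passes over $\alpha_S$.

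Next I would close $\alpha_S$ and $\alpha_T$ into oriented closed curves $\tilde\alpha_S$ and $\tilde\alpha_T$ by appending auxiliary arcs that lie outside a large disk containing $D_L$; because $D_L$ is planar, this can be arranged so that the two closing arcs neither cross one another nor cross any other portion of $D_L$. The linking number $\operatorname{lk}(\tilde\alpha_S,\tilde\alpha_T)$ may then be computed either by signing only those crossings at which $\tilde\alpha_S$ passes over $\tilde\alpha_T$, or by signing only those at which $\tilde\alpha_T$ passes over $\tilde\alpha_S$. These give exactly the two sides of the desired equation, so the conclusion follows from $\operatorname{lk}(\tilde\alpha_S,\tilde\alpha_T)=\operatorname{lk}(\tilde\alpha_T,\tilde\alpha_S)$.

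The main obstacle I anticipate is the closing step: verifying that the auxiliary arcs can indeed be chosen so as to introduce no new crossings between the two components. This is a topological fact resting on the planarity of $D_L$, and it is exactly the step that fails for a generic virtual diagram, consistent with the restriction of the lemma to classical knots. Once this geometric input is established, the rest of the argument is an unpacking of sign conventions and the standard symmetry of $\operatorname{lk}$.
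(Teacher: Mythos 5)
Your overall strategy --- reading the two sums as the two standard crossing-count computations of a linking number and invoking its symmetry --- is the same engine that drives the paper's proof. The gap is in the closure step, and it is essential rather than cosmetic. The endpoints of $\alpha_S$ and $\alpha_T$ sit at the Reidemeister III triangle, which in general lies in a bounded complementary region of the rest of the diagram; an auxiliary arc from such an endpoint to the region outside a large disk containing $D_L$ must therefore cross the diagram, so the crossing-free routing you assert does not exist. Even the weaker property you actually need --- that the closing arc of $\alpha_S$ miss $\tilde{\alpha}_T$ and vice versa --- is not automatic: whether the two endpoints of $\alpha_S$ lie in the same complementary region of the closed curve $\tilde{\alpha}_T$ is precisely the point at issue, since $\alpha_S$ itself meets $\tilde{\alpha}_T$ once for each $S$--$T$ crossing. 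That something must go wrong is visible from the fact that your argument uses nothing about the configuration beyond disjointness of $S$ and $T$, yet the identity is false for general disjoint sub-arcs of a classical diagram: for the right trefoil $1\,\bar{2}\,3\,\bar{1}\,2\,\bar{3}$ (all signs $+$), take $A$ to be a small arc containing only the letter $1$ and $B$ a small arc containing only $3\,\bar{1}$; the unique arrow joining $A$ and $B$ is arrow $1$, oriented from $A$ to $B$, so the two signed counts are $1$ and $0$. Any correct proof must use the specific way $S$ and $T$ are separated by the strands carrying $i$, $j$, $k$.

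The paper closes the arcs differently, and this is where the real content lies: it splices the three crossings $i$, $j$, $k$ of the triangle (Seifert-type smoothings on $AD_{D_K}$, $A^{-1}$-type on $AD_{D'_K}$), a purely local reconnection at those crossings that introduces no new crossings at all, and then reads off the statement from the two computations of the linking number of two components of the resulting link. If you want to repair your write-up, replace the ``route around the outside'' closure by this local one: close $\alpha_S$ and $\alpha_T$ by arcs inside the small disk supporting the Reidemeister III move, obtained from the smoothings at $i$, $j$, $k$, and verify from the positions of $i,j,\bar{i},\bar{j},\bar{k},k$ in the Gauss word that the two closing arcs are disjoint from each other and from the opposite component. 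With that substitution the remainder of your argument (unpacking the over/under convention and applying the symmetry of $\operatorname{lk}$) goes through and coincides with the paper's proof.
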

\begin{proof}
For a crossing, there exist two types of smoothings  to splice the crossing.  One is of type $A^{-1}$ \cite{ItoS} and the other is of type Seifert as in Fig.~\ref{seifert}.    
\begin{figure}[htbp] 
   \centering 
   \includegraphics[width=5cm]{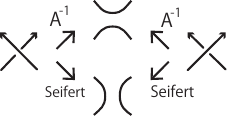} 
   \caption{Seifert splice and $A^{-1}$}
   \label{seifert}
   \end{figure}
For arrows corresponding to letters $i$, $j$, and $k$, we apply splices of type Seifert to $AD_{D_K}$ (type $A^{-1}$ to $AD_{D_{K'}}$,~resp.) as in Fig.~\ref{smm_zu}.   
\begin{figure}[htbp] 
   \centering 
   \includegraphics[width=5cm]{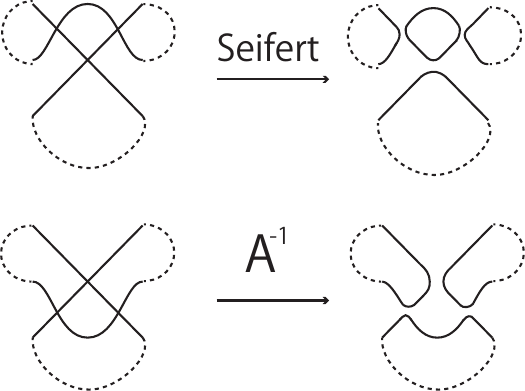} 
   \caption{Applications of splices to $AD_{D_K}$ and $AD_{D_{K'}}$}
   \label{smm_zu} 
\end{figure}
By considering a linking number of two components {\color{black}{(\cite[Theorem~5]{PV}, \cite[Section~4.1]{ostlund}) }} in the three component links, we have the statement.   
\end{proof} 
\begin{proposition}\label{prop2}
Let $\check{G}_{\le 3}$, $\mathbb{Z}[\check{G}_{< \infty}]$, $\check{n}_3$ $=$ $|\check{G}_{\le 3}|$, $\check{G}_{2, 3}$ $=$ $\{ x^*_i \}_{\check{n}_{1} +1 \le i \le \check{n}_{3}}$, and $\check{O}_{2, 3}(\cRep)$ be as in Section~\ref{sec2}.     
Let $\cFsums$ and $\caFsums$ be functions as in Definition~\ref{def_xiv}.  
Suppose that the coefficients $(\alpha_{\check{n}_1 + 1}, \alpha_{\check{n}_1 + 2}, \dots, \alpha_{\check{n}_3})$ satisfy an unsigned rule as in Definition~\ref{remarkGauss} for $\mathbb{Z}[\check{G}_{3, 3}]$ and   
suppose that for every relator $r^*$ having a mirroring pair $(r^* \in \check{O}_{3, 3}(\check{R}_{0 0 0 1 0}))$ and for every relator $R^*$ of the others $(R^* \in \check{R}_{0 0 0 1 0})$, the condition that
\begin{center}
$\caFsums  (r^{\epsilon}_{\cdot \cdot} + \hat{r}^{\epsilon}_{\cdot \cdot}) = 0$ and $\caFsums (R^*) = 0$.   
\end{center}
Then, $\displaystyle \cFsums$
is an integer-valued invariant under the third Reidemeister move of type $\mathcal{RI\!I\!I}$ for  classical knot diagrams.    
\end{proposition}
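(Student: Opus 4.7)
The plan is to replay the proof of Theorem~\ref{gg_thm2} in the case $\epsilon_4 = 1$, but to weaken the pointwise vanishing of strong~$\mathcal{RI\!I\!I}$ relators to a vanishing on mirroring pairs, with the gap filled by Lemma~\ref{lem1} and the unsigned rule. Let $D_K$ and $D'_K$ be two classical knot diagrams differing by a single strong~$\mathcal{RI\!I\!I}$ and take $G^*$, ${G'}^*$ as in the proof of the case $\epsilon_4=1$. Specializing equation~(\ref{rel_eq}) to $b = 2$, $d = 3$ and to our coefficients $(\alpha_{\check{n}_1 + 1}, \dots, \alpha_{\check{n}_3})$, I obtain
\[
\cFsums(D_K) - \cFsums(D'_K) = \sum_{z^*_0 \in \sub^{(0)}(G^*)} \caFsums\bigl(\rho(z^*_0)\bigr),
\]
where $\rho(z^*_0) \in \check{O}_{2,3}(\check{R}_{00010})$ is the explicit strong~$\mathcal{RI\!I\!I}$ relator attached to $z^*_0$ via the three extra letters $i, j, k$ (i.e.\ the signed sum $z^*_3(z^*_0)+z^*_{2a}(z^*_0)+z^*_{2b}(z^*_0)+z^*_{2c}(z^*_0) - {z'}^*_3(z^*_0)-{z'}^*_{2a}(z^*_0)-{z'}^*_{2b}(z^*_0)-{z'}^*_{2c}(z^*_0)$).

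Next I decompose $\sub^{(0)}(G^*) = \Sigma_{\mathrm{mp}} \sqcup \Sigma_{\mathrm{sing}}$, where $\Sigma_{\mathrm{mp}}$ consists of those $z^*_0$ whose $\rho(z^*_0)$ has a mirroring partner in the sense of Definition~\ref{mirror_dfn} (roughly, those $z^*_0$ containing an arrow $\lambda$ joining two distinct regions among $S, T, U$) and $\Sigma_{\mathrm{sing}}$ of the rest. The contributions from $\Sigma_{\mathrm{sing}}$ vanish termwise by the hypothesis $\caFsums(R^*) = 0$ on the ``other'' relators. For $z^*_0 \in \Sigma_{\mathrm{mp}}$, each $\rho(z^*_0)$ is of the form $r^\epsilon_{\cdot\, \cdot}$ for some region pair and sign $\epsilon \in \{+,-\}$; by the unsigned rule on $\mathbb{Z}[\check{G}_{3,3}]$ the coefficients attached to $r^+_{\cdot\,\cdot}$ and $r^-_{\cdot\,\cdot}$ coincide, so these contributions collapse into the signed object $r^*_{\cdot\,\cdot} = r^+_{\cdot\,\cdot} + r^-_{\cdot\,\cdot}$.

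I then invoke Lemma~\ref{lem1}, region pair by region pair, on the classical diagram underlying $G^*$: the equality $\sum_{\lambda\in S,\, \bar\lambda\in T}\sign(\lambda) = \sum_{\bar\lambda\in S,\, \lambda\in T}\sign(\lambda)$ (and the analogues for the other region pairs $(S,U),(T,U),(T,V),(U,V)$) says exactly that the weighted count of $z^*_0$'s producing $r^*_{ST}$ matches that producing $\hat r^*_{ST} = r^*_{TS}$. Re-indexing the sum along these pairings yields
\[
\sum_{z^*_0 \in \Sigma_{\mathrm{mp}}} \caFsums\bigl(\rho(z^*_0)\bigr) = \sum_{\{r^*, \hat r^*\}} \caFsums(r^* + \hat r^*),
\]
where the right-hand outer sum runs over unordered mirroring pairs. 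Each summand on the right vanishes by hypothesis, so $\cFsums(D_K) = \cFsums(D'_K)$. The second form $AD_{D_K} = [[SkjTi\bar{k}U\bar{j}\bar{i}V]]$ is handled by the symmetric bookkeeping.

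The main obstacle is the re-indexing step: one must verify, region-pair by region-pair, that Lemma~\ref{lem1} produces precisely the matching signed counts needed to bind each $r^*_{\cdot\, \cdot}$ to its mirror $\hat r^*_{\cdot\, \cdot} = r^*_{\cdot\, \cdot~\mathrm{reversed}}$, and that the unsigned rule identifies the attached coefficients; this is where classicality of the diagram enters the argument and where the hypothesis on mirroring pairs is not merely a formal weakening but a genuine strengthening of the known classical setting. Once this pairing is made explicit the remaining algebra is routine.
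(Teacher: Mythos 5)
Your proposal is correct and follows essentially the same route as the paper's own proof: both specialize equation~(\ref{rel_eq}) to $b=2$, $d=3$, split the relators attached to each $z^*_0$ into those admitting a mirroring partner and the rest, use the unsigned rule to combine $r^+_{\cdot\,\cdot}$ and $r^-_{\cdot\,\cdot}$ into $r^*_{\cdot\,\cdot}$, and invoke Lemma~\ref{lem1} to guarantee that each $r^*(z^*_0)$ is matched by its mirror $\hat r^*(z^*_0)$ so that the hypotheses kill every term. Your write-up is in fact somewhat more explicit than the paper about the re-indexing of the sum over $\sub^{(0)}(G^*)$ into a sum over unordered mirroring pairs, which is the step the paper compresses into the single sentence citing Lemma~\ref{lem1}.
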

\begin{proof}
Recall the proof of the case $\epsilon_4 =1$ of Theorem~\ref{gg_thm2} and put  $b=2$ and $d=3$.   

For every $z^*_0 \in \sub^{(0)}(G^*)$, let $\mathcal{R}(z^*_0)$ be a relator in $\cR_{00010}$ obtained by $z^*_0$.  By (\ref{rel_eq}), 
\begin{align*}
&\cFsums (D_K) -  \cFsums (D'_K)  \\
&= \sum_{z^*_0 \in \sub^{(0)}(G^*)} \caFsums  \Big(  ([[z^*_3 (z^*_0)]] + [[z^*_{2a} (z^*_0)]] + [[z^*_{2b} (z^*_0)]] + [[z^*_{2c} (z^*_0)]])\\ 
& \qquad\qquad - ([[{z'}^*_3(z^*_0)]] + [[{z'}^*_{2a}(z^*_0)]] + [[{z'}^*_{2b}(z^*_0)]] + [[{z'}^*_{2c}(z^*_0)]]) \Big). \\
\end{align*}
Here, note that the formula, depending on $z^*_0$, 
\begin{align*}
&[[z^*_3 (z^*_0)]] + [[z^*_{2a} (z^*_0)]] + [[z^*_{2b} (z^*_0)]] + [[z^*_{2c} (z^*_0)]] \\
&- ([[{z'}^*_3(z^*_0)]] + [[{z'}^*_{2a}(z^*_0)]] + [[{z'}^*_{2b}(z^*_0)]] + [[{z'}^*_{2c}(z^*_0)]]) 
\end{align*}
is the form of Type ($\check{\sss}$) relator.  
Then, let $r^*(z^*_0)$ be a projection $O_{2, 3}$ of a relator obtained by $z^*_0$ having a mirroring pair $(r^*(z^*_0) \in \check{O}_{3, 3}(\check{R}_{0 0 0 1 0}))$, let $\hat{r}^*(z^*_0)$ be a (projection of a) relator of a mirroring pair $(r^*(z^*_0), \hat{r}^*(z^*_0))$, and let $R^*(z^*_0)$ be a (projection of a) relator of the others obtained by $z^*_0$ $(R^* \in \check{R}_{0 0 0 1 0})$.  
Then,    
\begin{align*}
&\cFsums (D_K) - \cFsums (D'_K)  \\
&= \caFsums  \left(\sum_{z^*_0 \in \sub^{(0)}(G^*)} r^* (z^*_0) + \sum_{z^*_0 \in \sub^{(0)}(G^*)} \hat{r}^* (z^*_0) + \sum_{z^*_0 \in \sub^{(0)}(G^*)} R^* (z^*_0) \right)   \\  
%
%
&= \sum_{z^*_0 \in \sub^{(0)}(G^*)} \left( \caFsums  ( r^* (z^*_0) + \hat{r}^* (z^*_0) ) +  \caFsums  (R^* (z^*_0)) \right).\\
\end{align*}
%
%
Note that by Lemma~\ref{lem1}, for every $z^*_0 \in \sub^{(0)}(G^*)$ and for every nonzero $r^*(z_0)$, there exists $\hat{r}^*(z_0)$. 
 
Thus, for every $r^* (\in \check{O}_{3, 3}(\check{R}_{0 0 0 1 0}))$ having a mirroring pair ($r^*, \hat{r}^*$) and for every $R^*$ of the others $(R^* \in \check{R}_{0 0 0 1 0})$, 
\begin{center}
$\caFsums  (r^* +  \hat{r}^*) = 0$ and $\caFsums (R^*) = 0$  
\end{center}
implies 
\[
\cFsums (D_K) - \cFsums (D'_K) = 0  
\] 
for a single strong $\mathcal{RI\!I\!I}$ between $D_K$ and $D'_K$.
\end{proof}

If we consider the function of the form $\cFsumIrrs$ of Corollary~\ref{g_thm2} in Proposition~\ref{prop2}, we have:  
\begin{corollary}\label{g_thm2a}

Let $\check{G}_{\le 3}$, $\{ x^*_i \}_{i \in \mathbb{N}}$, $\mathbb{Z}[\check{G}_{< \infty}]$, $\check{n}_3$ $=$ $|\check{G}_{\le 3}|$, $\check{G}_{2, 3}$ $=$ $\{ x^*_i \}_{\check{n}_{1} +1 \le i \le \check{n}_{3}}$, and $\check{O}_{2, 3}(\cRep)$ be as in Section~\ref{sec2}.     
Let $\cFsumIrrs$ and $\caFsumIrrs$ be functions as in Definition~\ref{def_xiv} and Corollary~\ref{g_thm2}. 
Suppose that the coefficients $(\alpha_{\check{n}_1 + 1}, \alpha_{\check{n}_1 + 2},$ $\dots, \alpha_{\check{n}_3})$ satisfy an unsigned rule as in Definition~\ref{remarkGauss} for $\mathbb{Z}[\check{G}_{3, 3}]$ and   
suppose that for every relator $r^*$ having a mirroring pair $(r^* \in \check{O}_{3, 3}(\check{R}_{0 0 0 1 0}))$ and for every relator $R^*$ of the others $(R^* \in \check{R}_{0 0 1 1 0})$, the condition that 
\begin{center}
$  \caFsumIrrs  (r^* +  \hat{r}^*) = 0$ and $\caFsumIrrs (R^*) = 0$.   
\end{center}
Then, $\displaystyle \cFsumIrrs$
is an integer-valued invariant of classical knots.

\end{corollary}

If we consider the function of the form $\cFsumConns$ of Corollary~\ref{cor3b} in Proposition~\ref{prop2}, we have:  
\begin{corollary}\label{cor3ba}
Let $\check{G}_{\le 3}$, $\mathbb{Z}[\check{G}_{< \infty}]$, $\check{n}_3$ $=$ $|\check{G}_{\le 3}|$, $\check{G}_{2, 3}$ $=$ $\{ x^*_i \}_{\check{n}_{1} +1 \le i \le \check{n}_{3}}$, and $\check{O}_{2, 3}(\cRep)$ be as in Section~\ref{sec2}.     
Let $\cFsumConns$ and $\caFsumConns$ be functions as in Definition~\ref{def_xiv} and Corollary~\ref{cor3b}.  
Suppose that the coefficients $(\alpha_{\check{n}_1 + 1}, \alpha_{\check{n}_1 + 2}, \dots, \alpha_{\check{n}_3})$ satisfy an unsigned rule as in Definition~\ref{remarkGauss} for $\mathbb{Z}[\check{G}_{3, 3}]$ and   
suppose that for every relator $r^*$ having a mirroring pair $(r^* \in \check{O}_{3, 3}(\check{R}_{0 0 0 1 0}))$ and for every relator $R^*$ of the others $(R^* \in \check{R}_{0 0 1 1 0})$, the condition that
\begin{center}
$  \caFsumConns  (r^* + \hat{r}^*) = 0$ and $\caFsumConns (R^*) = 0$.   
\end{center}
Then, $\displaystyle \cFsumConns$
is an integer-valued additive invariant of classical knots.

\end{corollary}

\section{Notations}\label{SecNotation}
{\color{black}{
From Sections~\ref{proofThm1} to the end of this paper, in order to describe several proofs, we need to fix orders of arrow diagrams and relators.  First, Notation~\ref{order} fix the order of arrow diagrams.  
\begin{notation}\label{order}
Let $y^{*}_{1} = \input{y1}$,
$y^{*}_{2} = \input{y2}$,
$y^{*}_{3} = \input{y3}$,
$y^{*}_{4} = \input{y4}$,
$y^{*}_{5} = \input{y5}$,
$y^{*}_{6} = \input{y6}$,
$y^{*}_{7} = \input{y7}$,
$y^{*}_{8} = \input{y8}$,
$y^{*}_{9} = \input{y9}$,
$y^{*}_{10} = \input{y10}$,
$y^{*}_{11} = \input{y11}$,
$y^{*}_{12} = \input{y12}$,
$y^{*}_{13} = \input{y13}$,
$y^{*}_{14} = \input{y14}$,
$y^{*}_{15} = \input{y15}$,
$y^{*}_{16} = \input{y16}$,
$y^{*}_{17} = \input{y17}$,
$y^{*}_{18} = \input{y18}$,
$y^{*}_{19} = \input{y19}$,
$y^{*}_{20} = \input{y20}$,
$y^{*}_{21} = \input{y21}$,
$y^{*}_{22} = \input{y22}$,
$y^{*}_{23} = \input{y23}$,
$y^{*}_{24} = \input{y24}$,
$y^{*}_{25} = \input{y25}$,
$y^{*}_{26} = \input{y26}$,
$y^{*}_{27} = \input{y27}$,
$y^{*}_{28} = \input{y28}$,
$y^{*}_{29} = \input{y29}$,
$y^{*}_{30} = \input{y30}$,
$y^{*}_{31} = \input{y31}$,
$y^{*}_{32} = \input{y32}$,
$y^{*}_{33} = \begin{tikzpicture}[baseline=0pt]
\draw (0pt, 0pt) circle (7pt);
\draw(7pt, 0pt)--(-7pt, 0pt);
\fill (-4pt,1.2pt)--(-7pt,0pt)--(-4pt,-1.2pt)--cycle;
\draw(0pt, 7pt)--(0pt, -7pt);
\fill (-1.2pt,-4pt)--(0pt,-7pt)--(1.2pt,-4pt)--cycle;
\fill (4.94975pt, -4.94975pt) circle (1pt);
\draw[font=\tiny] (-10.5pt, 0pt) node {$-$};
\draw[font=\tiny] (0pt, -10.5pt) node {$-$};
\end{tikzpicture}
$,
$y^{*}_{34} = \begin{tikzpicture}[baseline=0pt]
\draw (0pt, 0pt) circle (7pt);
\draw(7pt, 0pt)--(-7pt, 0pt);
\fill (-4pt,1.2pt)--(-7pt,0pt)--(-4pt,-1.2pt)--cycle;
\draw(0pt, 7pt)--(0pt, -7pt);
\fill (-1.2pt,-4pt)--(0pt,-7pt)--(1.2pt,-4pt)--cycle;
\fill (4.94975pt, -4.94975pt) circle (1pt);
\draw[font=\tiny] (-10.5pt, 0pt) node {$-$};
\draw[font=\tiny] (0pt, -10.5pt) node {$+$};
\end{tikzpicture}
$,
$y^{*}_{35} = \begin{tikzpicture}[baseline=0pt]
\draw (0pt, 0pt) circle (7pt);
\draw(7pt, 0pt)--(-7pt, 0pt);
\fill (-4pt,1.2pt)--(-7pt,0pt)--(-4pt,-1.2pt)--cycle;
\draw(0pt, 7pt)--(0pt, -7pt);
\fill (-1.2pt,-4pt)--(0pt,-7pt)--(1.2pt,-4pt)--cycle;
\fill (4.94975pt, -4.94975pt) circle (1pt);
\draw[font=\tiny] (-10.5pt, 0pt) node {$+$};
\draw[font=\tiny] (0pt, -10.5pt) node {$-$};
\end{tikzpicture}
$,
$y^{*}_{36} = \begin{tikzpicture}[baseline=0pt]
\draw (0pt, 0pt) circle (7pt);
\draw(7pt, 0pt)--(-7pt, 0pt);
\fill (-4pt,1.2pt)--(-7pt,0pt)--(-4pt,-1.2pt)--cycle;
\draw(0pt, 7pt)--(0pt, -7pt);
\fill (-1.2pt,-4pt)--(0pt,-7pt)--(1.2pt,-4pt)--cycle;
\fill (4.94975pt, -4.94975pt) circle (1pt);
\draw[font=\tiny] (-10.5pt, 0pt) node {$+$};
\draw[font=\tiny] (0pt, -10.5pt) node {$+$};
\end{tikzpicture}
$,
$y^{*}_{37} = \begin{tikzpicture}[baseline=0pt]
\draw (0pt, 0pt) circle (7pt);
\draw(7pt, 0pt)--(-7pt, 0pt);
\fill (-4pt,1.2pt)--(-7pt,0pt)--(-4pt,-1.2pt)--cycle;
\draw(0pt, 7pt)--(0pt, -7pt);
\fill (-1.2pt,-4pt)--(0pt,-7pt)--(1.2pt,-4pt)--cycle;
\fill (4.94975pt, 4.94975pt) circle (1pt);
\draw[font=\tiny] (-10.5pt, 0pt) node {$-$};
\draw[font=\tiny] (0pt, -10.5pt) node {$-$};
\end{tikzpicture}
$,
$y^{*}_{38} = \begin{tikzpicture}[baseline=0pt]
\draw (0pt, 0pt) circle (7pt);
\draw(7pt, 0pt)--(-7pt, 0pt);
\fill (-4pt,1.2pt)--(-7pt,0pt)--(-4pt,-1.2pt)--cycle;
\draw(0pt, 7pt)--(0pt, -7pt);
\fill (-1.2pt,-4pt)--(0pt,-7pt)--(1.2pt,-4pt)--cycle;
\fill (4.94975pt, 4.94975pt) circle (1pt);
\draw[font=\tiny] (-10.5pt, 0pt) node {$-$};
\draw[font=\tiny] (0pt, -10.5pt) node {$+$};
\end{tikzpicture}
$,
$y^{*}_{39} = \begin{tikzpicture}[baseline=0pt]
\draw (0pt, 0pt) circle (7pt);
\draw(7pt, 0pt)--(-7pt, 0pt);
\fill (-4pt,1.2pt)--(-7pt,0pt)--(-4pt,-1.2pt)--cycle;
\draw(0pt, 7pt)--(0pt, -7pt);
\fill (-1.2pt,-4pt)--(0pt,-7pt)--(1.2pt,-4pt)--cycle;
\fill (4.94975pt, 4.94975pt) circle (1pt);
\draw[font=\tiny] (-10.5pt, 0pt) node {$+$};
\draw[font=\tiny] (0pt, -10.5pt) node {$-$};
\end{tikzpicture}
$,
$y^{*}_{40} = \begin{tikzpicture}[baseline=0pt]
\draw (0pt, 0pt) circle (7pt);
\draw(7pt, 0pt)--(-7pt, 0pt);
\fill (-4pt,1.2pt)--(-7pt,0pt)--(-4pt,-1.2pt)--cycle;
\draw(0pt, 7pt)--(0pt, -7pt);
\fill (-1.2pt,-4pt)--(0pt,-7pt)--(1.2pt,-4pt)--cycle;
\fill (4.94975pt, 4.94975pt) circle (1pt);
\draw[font=\tiny] (-10.5pt, 0pt) node {$+$};
\draw[font=\tiny] (0pt, -10.5pt) node {$+$};
\end{tikzpicture}
$,
$y^{*}_{41} = \begin{tikzpicture}[baseline=0pt]
\draw (0pt, 0pt) circle (7pt);
\draw(7pt, 0pt)--(-7pt, 0pt);
\fill (-4pt,1.2pt)--(-7pt,0pt)--(-4pt,-1.2pt)--cycle;
\draw(0pt, 7pt)--(0pt, -7pt);
\fill (-1.2pt,-4pt)--(0pt,-7pt)--(1.2pt,-4pt)--cycle;
\fill (-4.94975pt, 4.94975pt) circle (1pt);
\draw[font=\tiny] (-10.5pt, 0pt) node {$-$};
\draw[font=\tiny] (0pt, -10.5pt) node {$-$};
\end{tikzpicture}
$,
$y^{*}_{42} = \begin{tikzpicture}[baseline=0pt]
\draw (0pt, 0pt) circle (7pt);
\draw(7pt, 0pt)--(-7pt, 0pt);
\fill (-4pt,1.2pt)--(-7pt,0pt)--(-4pt,-1.2pt)--cycle;
\draw(0pt, 7pt)--(0pt, -7pt);
\fill (-1.2pt,-4pt)--(0pt,-7pt)--(1.2pt,-4pt)--cycle;
\fill (-4.94975pt, 4.94975pt) circle (1pt);
\draw[font=\tiny] (-10.5pt, 0pt) node {$-$};
\draw[font=\tiny] (0pt, -10.5pt) node {$+$};
\end{tikzpicture}
$,
$y^{*}_{43} = \begin{tikzpicture}[baseline=0pt]
\draw (0pt, 0pt) circle (7pt);
\draw(7pt, 0pt)--(-7pt, 0pt);
\fill (-4pt,1.2pt)--(-7pt,0pt)--(-4pt,-1.2pt)--cycle;
\draw(0pt, 7pt)--(0pt, -7pt);
\fill (-1.2pt,-4pt)--(0pt,-7pt)--(1.2pt,-4pt)--cycle;
\fill (-4.94975pt, 4.94975pt) circle (1pt);
\draw[font=\tiny] (-10.5pt, 0pt) node {$+$};
\draw[font=\tiny] (0pt, -10.5pt) node {$-$};
\end{tikzpicture}
$,
$y^{*}_{44} = \begin{tikzpicture}[baseline=0pt]
\draw (0pt, 0pt) circle (7pt);
\draw(7pt, 0pt)--(-7pt, 0pt);
\fill (-4pt,1.2pt)--(-7pt,0pt)--(-4pt,-1.2pt)--cycle;
\draw(0pt, 7pt)--(0pt, -7pt);
\fill (-1.2pt,-4pt)--(0pt,-7pt)--(1.2pt,-4pt)--cycle;
\fill (-4.94975pt, 4.94975pt) circle (1pt);
\draw[font=\tiny] (-10.5pt, 0pt) node {$+$};
\draw[font=\tiny] (0pt, -10.5pt) node {$+$};
\end{tikzpicture}
$,
$y^{*}_{45} = \begin{tikzpicture}[baseline=0pt]
\draw (0pt, 0pt) circle (7pt);
\draw(7pt, 0pt)--(-7pt, 0pt);
\fill (-4pt,1.2pt)--(-7pt,0pt)--(-4pt,-1.2pt)--cycle;
\draw(0pt, 7pt)--(0pt, -7pt);
\fill (-1.2pt,-4pt)--(0pt,-7pt)--(1.2pt,-4pt)--cycle;
\fill (-4.94975pt, -4.94975pt) circle (1pt);
\draw[font=\tiny] (-10.5pt, 0pt) node {$-$};
\draw[font=\tiny] (0pt, -10.5pt) node {$-$};
\end{tikzpicture}
$,
$y^{*}_{46} = \begin{tikzpicture}[baseline=0pt]
\draw (0pt, 0pt) circle (7pt);
\draw(7pt, 0pt)--(-7pt, 0pt);
\fill (-4pt,1.2pt)--(-7pt,0pt)--(-4pt,-1.2pt)--cycle;
\draw(0pt, 7pt)--(0pt, -7pt);
\fill (-1.2pt,-4pt)--(0pt,-7pt)--(1.2pt,-4pt)--cycle;
\fill (-4.94975pt, -4.94975pt) circle (1pt);
\draw[font=\tiny] (-10.5pt, 0pt) node {$-$};
\draw[font=\tiny] (0pt, -10.5pt) node {$+$};
\end{tikzpicture}
$,
$y^{*}_{47} = \begin{tikzpicture}[baseline=0pt]
\draw (0pt, 0pt) circle (7pt);
\draw(7pt, 0pt)--(-7pt, 0pt);
\fill (-4pt,1.2pt)--(-7pt,0pt)--(-4pt,-1.2pt)--cycle;
\draw(0pt, 7pt)--(0pt, -7pt);
\fill (-1.2pt,-4pt)--(0pt,-7pt)--(1.2pt,-4pt)--cycle;
\fill (-4.94975pt, -4.94975pt) circle (1pt);
\draw[font=\tiny] (-10.5pt, 0pt) node {$+$};
\draw[font=\tiny] (0pt, -10.5pt) node {$-$};
\end{tikzpicture}
$,
$y^{*}_{48} = \begin{tikzpicture}[baseline=0pt]
\draw (0pt, 0pt) circle (7pt);
\draw(7pt, 0pt)--(-7pt, 0pt);
\fill (-4pt,1.2pt)--(-7pt,0pt)--(-4pt,-1.2pt)--cycle;
\draw(0pt, 7pt)--(0pt, -7pt);
\fill (-1.2pt,-4pt)--(0pt,-7pt)--(1.2pt,-4pt)--cycle;
\fill (-4.94975pt, -4.94975pt) circle (1pt);
\draw[font=\tiny] (-10.5pt, 0pt) node {$+$};
\draw[font=\tiny] (0pt, -10.5pt) node {$+$};
\end{tikzpicture}
$,

$y^{*}_{49} = \input{y49}$,
$y^{*}_{50} = \input{y50}$,
$y^{*}_{51} = \input{y51}$,
$y^{*}_{52} = \input{y52}$,
$y^{*}_{53} = \input{y53}$,
$y^{*}_{54} = \input{y54}$,
$y^{*}_{55} = \input{y55}$,
$y^{*}_{56} = \input{y56}$,
$y^{*}_{57} = \input{y57}$,
$y^{*}_{58} = \input{y58}$,
$y^{*}_{59} = \input{y59}$,
$y^{*}_{60} = \input{y60}$,
$y^{*}_{61} = \input{y61}$,
$y^{*}_{62} = \input{y62}$,
$y^{*}_{63} = \input{y63}$,
$y^{*}_{64} = \input{y64}$,
$y^{*}_{65} = \input{y65}$,
$y^{*}_{66} = \input{y66}$,
$y^{*}_{67} = \input{y67}$,
$y^{*}_{68} = \input{y68}$,
$y^{*}_{69} = \input{y69}$,
$y^{*}_{70} = \input{y70}$,
$y^{*}_{71} = \input{y71}$,
$y^{*}_{72} = \input{y72}$,
$y^{*}_{73} = \input{y73}$,
$y^{*}_{74} = \input{y74}$,
$y^{*}_{75} = \input{y75}$,
$y^{*}_{76} = \input{y76}$,
$y^{*}_{77} = \input{y77}$,
$y^{*}_{78} = \input{y78}$,
$y^{*}_{79} = \input{y79}$,
$y^{*}_{80} = \input{y80}$,
$y^{*}_{81} = \input{y81}$,
$y^{*}_{82} = \input{y82}$,
$y^{*}_{83} = \input{y83}$,
$y^{*}_{84} = \input{y84}$,
$y^{*}_{85} = \input{y85}$,
$y^{*}_{86} = \input{y86}$,
$y^{*}_{87} = \input{y87}$,
$y^{*}_{88} = \input{y88}$,
$y^{*}_{89} = \input{y89}$,
$y^{*}_{90} = \input{y90}$,
$y^{*}_{91} = \input{y91}$,
$y^{*}_{92} = \input{y92}$,
$y^{*}_{93} = \input{y93}$,
$y^{*}_{94} = \input{y94}$,
$y^{*}_{95} = \input{y95}$,
$y^{*}_{96} = \input{y96}$,
$y^{*}_{97} = \input{y97}$,
$y^{*}_{98} = \input{y98}$,
$y^{*}_{99} = \input{y99}$,
$y^{*}_{100} = \input{y100}$,
$y^{*}_{101} = \input{y101}$,
$y^{*}_{102} = \input{y102}$,
$y^{*}_{103} = \input{y103}$,
$y^{*}_{104} = \input{y104}$,
$y^{*}_{105} = \input{y105}$,
$y^{*}_{106} = \input{y106}$,
$y^{*}_{107} = \input{y107}$,
$y^{*}_{108} = \input{y108}$,
$y^{*}_{109} = \input{y109}$,
$y^{*}_{110} = \input{y110}$,
$y^{*}_{111} = \input{y111}$,
$y^{*}_{112} = \input{y112}$,
$y^{*}_{113} = \input{y113}$,
$y^{*}_{114} = \input{y114}$,
$y^{*}_{115} = \input{y115}$,
$y^{*}_{116} = \input{y116}$,
$y^{*}_{117} = \input{y117}$,
$y^{*}_{118} = \input{y118}$,
$y^{*}_{119} = \input{y119}$,
$y^{*}_{120} = \input{y120}$,
$y^{*}_{121} = \input{y121}$,
$y^{*}_{122} = \input{y122}$,
$y^{*}_{123} = \input{y123}$,
$y^{*}_{124} = \input{y124}$,
$y^{*}_{125} = \input{y125}$,
$y^{*}_{126} = \input{y126}$,
$y^{*}_{127} = \input{y127}$,
$y^{*}_{128} = \input{y128}$,
$y^{*}_{129} = \input{y129}$,
$y^{*}_{130} = \input{y130}$,
$y^{*}_{131} = \input{y131}$,
$y^{*}_{132} = \input{y132}$,
$y^{*}_{133} = \input{y133}$,
$y^{*}_{134} = \input{y134}$,
$y^{*}_{135} = \input{y135}$,
$y^{*}_{136} = \input{y136}$,
$y^{*}_{137} = \input{y137}$,
$y^{*}_{138} = \input{y138}$,
$y^{*}_{139} = \input{y139}$,
$y^{*}_{140} = \input{y140}$,
$y^{*}_{141} = \input{y141}$,
$y^{*}_{142} = \input{y142}$,
$y^{*}_{143} = \input{y143}$,
$y^{*}_{144} = \input{y144}$,
$y^{*}_{145} = \input{y145}$,
$y^{*}_{146} = \input{y146}$,
$y^{*}_{147} = \input{y147}$,
$y^{*}_{148} = \input{y148}$,
$y^{*}_{149} = \input{y149}$,
$y^{*}_{150} = \input{y150}$,
$y^{*}_{151} = \input{y151}$,
$y^{*}_{152} = \input{y152}$,
$y^{*}_{153} = \input{y153}$,
$y^{*}_{154} = \input{y154}$,
$y^{*}_{155} = \input{y155}$,
$y^{*}_{156} = \input{y156}$,
$y^{*}_{157} = \input{y157}$,
$y^{*}_{158} = \input{y158}$,
$y^{*}_{159} = \input{y159}$,
$y^{*}_{160} = \input{y160}$,
$y^{*}_{161} = \input{y161}$,
$y^{*}_{162} = \input{y162}$,
$y^{*}_{163} = \input{y163}$,
$y^{*}_{164} = \input{y164}$,
$y^{*}_{165} = \input{y165}$,
$y^{*}_{166} = \input{y166}$,
$y^{*}_{167} = \input{y167}$, and 
$y^{*}_{168} = \input{y168}$ .  
\end{notation}
Next, we fix the orders of relators we will use.  
\begin{notation}
The notation of relators of type ($\check{\ii}$), type ($\check{\w}$), and type ($\check{\sss}$) obey Definition~\ref{def_relators_arrow} and their orders of relators are given by \cite{TakamuraURL}.   Since we should handle many relators, thus we comment on the number of relators, which affects sizes of matrices appearing proofs of main results.     

\noindent $\bullet$ Type~(\ii): 122  relators $r^*_i$ ($1 \le i \le 122)$, each of which has the form $[[Si\bar{i}T]]$ or $[[S\bar{i}iT]]$, where the length of Gauss word $ST$ is 2 or 4 and $\sign(i)=\pm$.  

\noindent $\bullet$ Type~(\w): 96 relators $r^*_i$ ($123 \le i \le 218$), each of which has the form $[[S\bar{i}\,\bar{j}TijU]]$ $+$ $[[S\bar{i}TiU]]$ $+$ $[[S\bar{j}TjU]]$ or $[[SijT\bar{i}\,\bar{j}U]]$ $+$ $[[SiT\bar{i}U]]$ $+$ $[[SjT\bar{j}U]]$, where the length of Gauss word $STU$ is 0 or 2 and $\sign(i)=\pm$ and $\sign(j)=\mp$.

\noindent $\bullet$ Type~(\sss): 246 relators $r^*_i$ ($219 \le i \le 464$), each of which is a 8-term formula, the length of Gauss word $STUV$ is 0 or 2, including the first term:  
$[[SijT\bar{k}\bar{i}U\bar{j}\,kV]]$, $[[S\bar{k}\bar{i}T\bar{j}\,kUijV]]$, $[[S\bar{j}\,kTijU\bar{k}\bar{i}V]]$,    
$[[SkjTi\bar{k}U\bar{j}\,\bar{i}V]]$, $[[S\bar{j}\,\bar{i}TkjUi\bar{k}V]]$, $[[Si\bar{k}T\bar{j}\,\bar{i}UkjV]]$
where  
$(\sign(i),  \sign(j), \sign(k))$ $=$ $(-1,1, 1)$ for the first three and $(\sign(i),  \sign(j), \sign(k))$ $=$ $(1,-1, 1)$ for the latter.    

\noindent $\bullet$ As described in Definition~\ref{mirror_dfn}, each  mirroring pair is given by the form $r^*$ $+$ $\hat{r}^*$.  For degree at most three, $\sharp\sharp$ type ($\sss$) relators appear and  include $\ast\ast$ mirroring pairs (see \cite{TakamuraURL} for the full list).    In \cite{TakamuraURL}, by definition, every mirroring pair is presented by $16$ term relator.  
\end{notation}
}}

\section{Proposition~\ref{thm1} and its proof}\label{proofThm1}
\begin{proposition}\label{thm1}
The seven (two,~resp.) Gauss diagram formulas
\begin{center}
$\tilde{v}_{3, i}(\cdot) = \langle \tilde{f}_{3, i}, \cdot \rangle  \quad (1 \le i \le 7)$ \quad $(\tilde{v}_{2, i}(\cdot) = \langle \tilde{f}_{2, i}, \cdot \rangle  \quad (i=1, 2),~{\text{resp.}})$
\end{center}
are independent Goussarov-Polyak-Viro finite type invariants of degree three $($two,~resp.$)$ for long virtual knots.  Here, $\tilde{f}_{3, i}$ $(1 \le i \le 7)$ $(\tilde{f}_{2, i}$ $(i=1,2)$,~resp.$)$ are defined by

\begin{align*}
\tilde{f}_{3, 1} & :=   -  + \input{y137} + \input{y138} + \input{y140} + \input{y142} - \input{y143} - \input{y148} - \input{y152} - \input{y153} \\
& \phantom{=}
+2 \input{y155} + \input{y156} + \input{y157} + \input{y159} 
  + \input{y160} - \input{y161} +3 \input{y167} +2 \input{y168}, \\
\tilde{f}_{3, 2} & :=   -  + \input{y138} - \input{y144} + \input{y154} - \input{y157} + \input{y162} - \input{y163}, \\
\tilde{f}_{3, 3} & :=   +  +  +  - \input{y138} - \input{y140} - \input{y151} - \input{y154} - \input{y155} - \input{y159} 
\\
& \phantom{=}
- \input{y162} - \input{y165}   -\input{y167} - \input{y168}, \\
\tilde{f}_{3, 4} & := -  +  + \input{y142} + \input{y147} - \input{y151} - \input{y159} - \input{y165} + \input{y166}, \\
\tilde{f}_{3, 5} & :=   -  + \input{y138} + \input{y139} +2 \input{y140} + \input{y141} + \input{y142} - \input{y145} - \input{y146} - \input{y149} 
\\
& \phantom{=}
 - \input{y150} + \input{y155} + \input{y157}  + \input{y158} + \input{y159} - \input{y164} +2 \input{y167} +3 \input{y168}, \\
\tilde{f}_{3, 6} & := 4  +4  + \input{y96} - \input{y102} - \input{y138} -2 \input{y140} + \input{y142} + \input{y143} -4 \input{y151} - \input{y152} \\
 & \phantom{=}
 -2 \input{y155} - \input{y157}  -3 \input{y159} -4 \input{y165} -2 \input{y167} -2 \input{y168}, \\
\tilde{f}_{3, 7} & :=  \input{y72} - \input{y78} - \input{y96} + \input{y102} - \input{y143} - \input{y146} + \input{y149} + \input{y152}, \\
\tilde{f}_{2, 1} & :=   +  +  + ,~{\textrm{and}}~ \\
\tilde{f}_{2, 2} & :=   +  +  +  .  
\end{align*}

Further, the restriction of $\tilde{v}_{3,i}$ $(1 \le i \le 7)$ to classical knots is a Vassiliev knot  invariant of degree three. More precisely, on classical knots $\tilde{v}_{3,1}=\tilde{v}_{3,5}=2 v_3$, $\tilde{v}_{3,2}=\tilde{v}_{3,4}=\tilde{v}_{3,7}=0$, $\tilde{v}_{3,3}=-2 v_3+v_2$ and $\tilde{v}_{3,6}=-4 v_3+2v_2$, where $v_3$ is the Vassiliev knot invariant of degree three which takes values $0$ on the unknot, $+1$ on the right trefoil, and $-1$ on the left trefoil.  
Moreover it is known that on classical knots $\tilde{v}_{2,1}=\tilde{v}_{2,2}=v_2$, where $v_2$ is the Vassiliev knot invariant of degree two which takes values $0$ on the unknot and $+1$ on both the right and left trefoil. 
\end{proposition}
{\color{black}{
\begin{remark}
When we only used connected arrow diagrams (Definition~\ref{defconnected}) simply, the first and second authors  had $\tilde{v}_i$ ($1 \le i \le 5$) whereas two additional explicit  formulas $\tilde{v}_6, \tilde{v}_7$ had been given using unconnected arrow diagrams via computer aid by the third author.  
We also mention that  \cite{ItoTakamura} includes a similar computer computation for different situations.           
 For each Gauss diagram having three arrows, Proposition~\ref{thm1} shows that it is possible to choose representations using unsigned arrow diagrams only for the above five formulas $\tilde{v}_i$ ($1 \le i \le 5$) consisting of connected arrow diagrams.  It is unknown that there is a relation between the connectedness and the representation of invariants by unsigned arrow diagrams.       
\end{remark}
}}

\begin{proof}[Proof of Proposition~\ref{thm1} (1st part)]
{\color{black}{Let $y^*_i$ be an arrow diagram as in Notation~\ref{order}.  

\noindent(Invariance) Theorem~\ref{gg_thm2} implies that if there exist integers $\alpha_i$ ($1 \le i \le 168$) such that 
$\sum_{i=1}^{168} \alpha_i \tilde{y}^*_i (r^*)$ $=$ $0$ ($\forall r^* \in \check{O}_{2, 3}(\check{R}^{\min}_{10110})$), then $\sum_{i=1}^{168} \alpha_i \tilde{y}^*_i$ is a (long) virtual knot invariant.  
It is elementary to show that $\check{O}_{2, 3}(\check{R}^{\min}_{10110})$ consists of  elements $r^*_j$ ($1 \le j \le 464$).  Here, note that $122$ ($96$, $246$, resp.) relators are of type ($\check{\ii}$) (($\check{\w}$), ($\check{\sss}$), resp.).       
Let $\bf{x}$ $=$ $(y^*_1, y^*_2, \ldots, y^*_{168})$.  
Then, by solving the linear equation ${\bf{x}} M$ $=$ $\bf{0}$, we have  such $\alpha_i$ ($1 \le i \le 168$), where $M$ $=$ $(\tilde{y}^*_i (r^*_j))_{1\le i \le 168, 1 \le j \le 464}$, which implies nine (seven plus two) linearly independent functions.     
}}
  
\noindent(Fixing degrees) They are of degree at most  three since it is known that an invariant presented by a Gauss diagram formula is finite type of degree at most the number of arrows in an arrow diagram having the maximal number of arrows among the arrow diagrams in the Gauss diagram formula \cite{gpv}.  
By using virtual trefoil with a base point (Fig.~\ref{VT}), we also check that $\tilde{v}_{3, i}$ ($i=2, 4, 7$) is not degree at most two.   As it is directly shown in the 2nd part below, since the restriction of $\tilde{v}_{3, i}$ ($i=1, 3, 5, 6$) includes the term $v_3$ ($=$ the classical Vassiliev knot invariant of degree three), which detects a trefoil and its mirror image, the claim holds.  

\noindent(Independency) Any two invariants among $\tilde{v}_{3, i}$ ($1 \le i \le 7$) and $\tilde{v}_{2, i}$ ($i=1, 2$) are mutually   linearly independent.  It is clear that this fact implies that each invariant $\tilde{v}_{3, i}$ is independent of the other invariants. In fact, the long virtual knot 
$\atrad$ ($\atraf$, $\atrae$, $\atrac$, or $\atraa$,
 resp.) is distinguished with the trivial virtual long knot by $ \tilde{v}_{3,i}$ ($1 \le i \le 5$ resp.) and not distinguished with the trivial one by the other four invariants. Therefore, we have that 
 $\atrad$, $\atraf$, $\atrae$, $\atrac$, and $\atraa$ are all different. 
By the same argument as the above, for $\tilde{v}_{3, 6}$ and $\tilde{v}_{3, 7}$, we find non-connected arrow diagrams, which implies that they are independent of $\tilde{v}_{3, i}$ ($1 \le i \le 5$) and  both invariants are nontrivial.    Further, it is clear that $\tilde{v}_{3, 7}$ includes a non-connected arrow diagrams that is different from each of non-connected arrow diagrams of $\tilde{v}_{3, 7}$.  Then $\tilde{v}_{3, 7}$ is independent from $\tilde{v}_{3, 6}$.    
\end{proof}

\begin{proof}[Proof of Proposition~\ref{thm1} (2nd part)]
It is known that for any Goussarov-Polyak-Viro finite type invariant of degree $n$, its restriction to classical knots is a Vassiliev knot  invariant of degree $\le n$ \cite{gpv}.
Therefore each $\tilde{v}_{3,i}$ is a Vassiliev knot invariant of degree $\le 3$ for classical knots. 
Here, we set that the Vassiliev knot invariant $v_3$ of degree $3$ takes value $0$ on the unknot, $+1$ on the right trefoil and $-1$ on the left trefoil, and the Vassiliev knot invariant $v_2$ of degree $2$ takes value $0$ on the unknot, $+1$ on the right trefoil and left trefoil. There is no nontrivial Vassiliev knot invariant of degree $1$.
Moreover, the value of the unknot by $\tilde{v}_{3,i}$ vanishes.
Therefore $\tilde{v}_{3,i}$ is a linear sum of $v_3$ and $v_2$.
By calculating the value of the right trefoil and the left trefoil, we have the statement.   
\end{proof}

\section{Proposition~\ref{thm2} and its proof}\label{proofThm2} 
\begin{proposition}\label{thm2}
The twenty-one Gauss diagram formulas 
\begin{center}
$v_{3, i}(\cdot) = \langle f_{3, i}, \cdot \rangle  \quad (1 \le i \le 21)$ and  $\tilde{v}_{2, i}(\cdot) = \langle \tilde{f}_{2, i}, \cdot \rangle \quad (i=1, 2)$
\end{center}
that are classical knot invariants are explicitly given as follows.  Here, $f_{3, i}$ $(1 \le i \le 21)$ and $f_{2, i}$ $(i=1,2)$ are defined by

\begin{align*}
f_{3, 1} & :=   -  - \input{y147} - \input{y148} - \input{y155} - \input{y161} - \input{y168}, \\
f_{3, 2} & :=   -  - \input{y143} - \input{y144} - \input{y147} - \input{y148} + \input{y153} - \input{y162} - \input{y163}, \\
f_{3, 3} & :=   -  + \input{y138} - \input{y144} + \input{y147} + \input{y148} - \input{y153} - \input{y157} + \input{y162} - \input{y163}, \\
f_{3, 4} & :=   -  + \input{y138} + \input{y139} - \input{y144} - \input{y145} - \input{y164} + \input{y168}, \\
f_{3, 5} & :=   -  - \input{y138} + \input{y141} - \input{y145} - \input{y146} - \input{y147} - \input{y165} - \input{y166}, \\
f_{3, 6} & :=   -  - \input{y139} + \input{y145} - \input{y153} + \input{y156} + \input{y165} - \input{y166}, \\
f_{3, 7} & :=  \input{y155} + \input{y156} + \input{y157} + \input{y158} + \input{y159} + \input{y160} +2 \input{y167} +2 \input{y168}, \\
f_{3, 8} & := - \input{y137} - \input{y138} - \input{y139} - \input{y140} - \input{y141} - \input{y142} + \input{y155} + \input{y156} + \input{y157} + \input{y158} + \input{y159} + \input{y160}, \\
f_{3, 9} & := - \input{y143} - \input{y144} - \input{y145} - \input{y146} - \input{y147} - \input{y148} + \input{y149} + \input{y150} + \input{y151} + \input{y152} + \input{y153} + \input{y154}, \\
f_{3, 10} & :=  \input{y137} + \input{y138} + \input{y139} - \input{y155} - \input{y156} - \input{y157}, \\
f_{3, 11} & :=  \input{y140} - \input{y143} - \input{y145} - \input{y146} + \input{y147} + \input{y151} - \input{y153} + \input{y155}, \\
f_{3, 12} & := - \input{y155} + \input{y158}, \\
f_{3, 13} & := - \input{y147} + \input{y153} - \input{y156} + \input{y159}, \\
f_{3, 14} & := - \input{y146} + \input{y152}, \\
f_{3, 15} & :=  \input{y138} - \input{y141} + \input{y145} - \input{y151}, \\
f_{3, 16} & := - \input{y144} - \input{y145} + \input{y150} + \input{y151}, \\
f_{3, 17} & := - \input{y143} + \input{y149}, \\
f_{3, 18} & :=  \input{y139} - \input{y142} - \input{y145} + \input{y151}, \\
f_{3, 19} & :=  \input{y137} + \input{y138} + \input{y139} - \input{y140} - \input{y141} - \input{y142}, \\
f_{3, 20} & :=  \input{y96} - \input{y102}, \\
f_{3, 21} & :=  \input{y72} - \input{y78}, \\
f_{2, 1} & :=   +  +  + ,~{\text{and}}~\\
f_{2, 2} & :=   +  +  + ~.   
\end{align*}
Here, each of the above twenty-one $($two,~resp.$)$ functions is the 
Vassiliev knot invariant of degree three $($two,~resp$)$ or the zero map on 
the set of classical knots.  

In particular, $v_{3, i}$ $($$1 \le i \le 21$$)$ is of degree at most three or zero map
and ${v}_{2, i}$ $($$1 \le i \le 2$$)$ is of degree two as follows:   
\begin{equation}\label{formula2}
{v}_{3,1} = - v_3 \quad (i=1, 2, 5), {v}_{3,7} = v_3 , {v}_{3,i} =0 \quad (i=3, 4, 6, 8 \le i \le 21), {v}_{2,i} = v_2 \quad (i=1,2).
\end{equation}


{\color{black}{As a corollary, for a classical (long) knot $K$, equations ${v}_{3,i} =0~(i=3, 4, 6, 8 \le i \le 21)$ give nontrivial relations among Gauss diagrams. }}
\end{proposition}
{\color{black}{
\begin{remark}
One may worry about the relationship between these $21$   formulas ($v_{3, i}$ ($1 \le i \le 21$)) and the Chmutov-Polyak formula from \cite{CP}.   However, the situation is a bit puzzling.  The $21$ formulas \emph{only} depend on generalized Reidemeister moves and linking number relations (\cite[Theorem~5]{PV}, \cite[Section~4.1]{ostlund}) that are sufficient conditions to give classical knot invariants.   On the other hand,  the Chmutov-Polyak formula \cite{CP} \emph{deeply} depends on the realization of link diagrams on the plane and it is unknown which relations are applied to Gauss diagrams to compare Chmutov-Polyak formula with our invariants. 
\end{remark}
}}

\begin{proof}[Proof of Proposition~\ref{thm2}]
Let ${y}^*_i$ be an arrow diagram as in Notation~\ref{order}.  
{\color{black}{Proposition~\ref{prop2}}} implies that if there exist integers $\alpha_i$ ($1 \le i \le 168$) such that $\sum_{i=1}^{168} \alpha_i \tilde{y}^*_i ({r}^*)$ $=$ $0$ ($\forall {r}^* \in \reduced(\check{R}^{\min}_{10110}(2, 3))$), then $\sum_{i=1}^{168} \alpha_i \tilde{y}^*_i$ is a (long) virtual knot invariant.    
It is elementary to show that $\reduced(\check{O}_{2, 3}(\check{R}^{\min}_{10110}))$ consists of  $404$ elements, where $122$ ($96$, $186$, resp.) relators are concerned with $\check{\ii}$ ($\check{\w}$, $\check{\sss}$, resp.).  
Let $\bf{x}$ $=$ $(y^*_1, y^*_2, \ldots, y^*_{168})$.       
Then, we have such $\alpha_i$ ($1 \le i \le 168$) by solving the linear equation ${\bf{x}} M$ $=$ $\bf{0}$, where $M$ $=$ $(\tilde{y}^*_i ({r}^*_j))_{1\le i \le 168, 1 \le j \le 404}$.     

It is elementary to show that the set of the solutions is given by formulas (\ref{formula2}) in the statement of Proposition~\ref{thm2}.    
Here, recall that the function $v_3$ ($v_2$, resp.), which is the Vassiliev knot invariant of degree three (two, resp.), is unique up to scale.  Suppose that $v_3$ ($v_2$, resp.) takes values $0$ on the unknot, $+1$ on the right trefoil, and $-1$ ($1$,~resp.) on the left trefoil.  
Then, letting $v_{3, i}$ $=$ $\lambda_i$ $v_3$ + $\mu_i$ $v_2$, it is elementary to obtain $\lambda_i$ and $\mu_i$, which give (\ref{formula2}).  
\end{proof}

Let $\bf{v}$ $=$ ${}^t \, (v_{3, 1}, v_{3, 2}, \ldots, v_{3, 21}, v_{2, 1}, v_{2, 2})$ and $\bf{w}$ $=$ ${}^t \, (\tilde
{v}_{3, 1}, \tilde
{v}_{3, 2}, \ldots, \tilde
{v}_{3, 7}, \tilde{v}_{2, 1}, \tilde{v}_{2, 2})$ given by Propositions~\ref{thm1} 
and~\ref{thm2}.  By definition, each Gauss diagram formula $v_{3, j}$ or $v_{2, j}$ ($\tilde{v}_{3, j}$, resp.) is regarded as the vector consisting of $\alpha_i$ obtained by $\sum_{i} \alpha_i \tilde{y}^*_i ({r}^*)$.  Therefore, a vector $\bf{v}$ ($\bf{w}$, resp.) consisting of  functions is identified with the matrix consisting of the coefficients $\{ \alpha_i \}$ of a function $v_{3, j}$ ($v_{2, j}$, $\tilde{v}_{3, j}$, resp.).   We freely use the identification in the next proof.

\section{ {\color{black}{ Proofs of Theorem~\ref{thm3} and Corollary~\ref{PVformula} }} }\label{PrTh3}
\subsection{ {\color{black}{ Proof of Theorem~\ref{thm3} }} }
\begin{proof}
For the fixed arrow diagrams $y_i$ ($1 \le i \le 168$), Proposition~\ref{thm1} (Proposition~\ref{thm2}, resp.) implies $168 \times 9$ ($168 \times 23$, resp.) matrix.  
Let $\bf{v}$ and $\bf{w}$ be the matrix as shown in the statement of Proposition~\ref{thm2}.    
Since $\bf{v}$ has $23$ vectors (corresponding to the rows) are linearly independent, ${\bf{v}} {}^{t}\!{\bf{v}}$ is regular.  Then, letting ${\bf{v}}^+$ $=$ ${}^{t}\!{\bf{v}} ({\bf{v}} {}^{t}\!{\bf{v}})^{-1}$, ${\bf{v}}^+$ satisfies that ${\bf{v}} {\bf{v}}^+$ is the identity matrix.    
Here, let $A$ be a matrix satisfying that $A {\bf{v}}$ $=$ $\bf{w}$.  Then,  
\[
A = A ({\bf{v}} {\bf{v}}^+) = {\bf{w}} {\bf{v}}^+.  
\]
By Propositions~\ref{thm2} and ~\ref{thm1}, since ${\bf{w}}$ and ${\bf{v}}^+$ are fixed, $A$ $=$ ${\bf{w}} {\bf{v}}^+$, which is unique.  
\end{proof}

\subsection{ {\color{black}{Proof of Corollary~\ref{PVformula} }}} 
\begin{proof}
We recall that a long knots topologically equal to a knot with a base point.  Then, for a given by considering the possibilities of the chosen base point on classical knots, 
we see that 
\[ \atrb = \atrba + \atrbb, \]
and 
\[ \aha =  \ahaa+\ahab+\ahac+\ahad+\ahae+\ahaf.
\] 
Then we have $-v_{3, 9}$ $=$ $\Biggl< 2 \atrb +  \aha, \ \cdot \ \Biggl>$.      
\end{proof}

{\color{black}{
\subsection{Proof of Proposition~\ref{unsignedProp} } \label{unsignedProof}
\begin{proof}
First we recall the definition of Type ($\check{\w}$) relator (Definition~\ref{def_relators_arrow}): 
An element $r^*$ of $\mathbb{Z}[\check{G}_{< \infty}]$ is called a \emph{Type $(\check{\w})$ relator} if there exist an oriented Gauss word $STU$ and letters $j$ and $k$ not in $STU$ such that $\sign(j)$ $\neq$ $\sign(k)$ and 
$r^*$ $=$ $[[SjkT\bar{j}\,\bar{k}U]]$ $+$ $[[SjT\bar{j}U]]$ $+$ $[[SkT\bar{k}U]]$.  

Suppose that the length of chord diagram  $[[SjkT\bar{j}\,\bar{k}U]]$ is $n+1$.  Also we suppose that a chord diagram appearing in $\caFsum$ is lower than $n+1$ and the longest length is $n$ among all the term.   
Noting that $\sign(j)$ $\neq$ $\sign(k)$, every chord diagram $x^*$ $=$  $[[SjT\bar{j}U]]$  appearing in $\caFsum$ always satisfies 
\begin{align}\label{wiirel}
\caFsum (
[[SjT\bar{j}U]] + [[SkT\bar{k}U]]) = 0.   
\end{align}
Note also that for any oriented Gauss word $u^*$ and any letter $j$ of $u^*$ of the longest length $n$, there exists oriented Gauss word $STU$ such that $u^*$ $=$ $SjT\bar{j}U$ or $S\bar{j}T j U$. 
Then, by (\ref{wiirel}), we have Lemma~\ref{wiilemma}.  
\begin{lemma}\label{wiilemma}
If $[[SjT\bar{j}U]]$ $\in \{ x^*_i \}_i$ with $\sign(j)$ $=$ $+$ $(resp.~-)$, say $x^*_1 = [[SjT\bar{j}U]]$, then $[[SkT\bar{k}U]]$ $\in \{ x^*_i \}_i$ with $\sign(k)$ $=$ $-$ $(resp.~+)$, say $x^*_2 = [[SkT\bar{k}U]]$.  Then we have  
\[
\alpha_1 = \alpha_2.  
\]
In other word, if 
the length of an arrow diagram $[[SjT\bar{j}U]]$   appearing in a linear combination $\caFsum$ is the longest among all the terms, $\caFsum$ includes 
\[
\alpha [[SjT\bar{j}U]] + \alpha [[SkT\bar{k}U]]
\] where $\sign(j)\neq \sign(k)$ and $\alpha$ is an integer.  
\end{lemma}
Further, for every arrow diagram $[[SjT\bar{j}U]]$ of the longest length appearing in $\caFsum$, we apply Lemma~\ref{wiilemma} to each letter $j$.  Here we recall the notation of unsigned arrow diagrams (Definition~\ref{remarkGauss}).  If an arrow diagram $x^*_i$ of the longest length appears in $\caFsum$, by exchanging order of $\{ x^*_i \}_i$ if necessary, then  the unsigned arrow diagram $x$ ($=$ $\sum_{i=1}^{2^n} x^*_i$) also appears in $\caFsum$.  
\end{proof}
}}

{\color{black}{
\section{Relationships with other  known formulas of long virtual knots}\label{Tip}
In this section, we give a comment on relationships between Proposition~\ref{thm1} with known formulas by Chmutov-Duzhin-Mostovoy \cite{CDM} and by Zohar-Hass-Linial-Nowik \cite{EHLN} of long virtual knots.   These formulas are obtained to correct a Gauss diagram formula \cite[Page~1059]{gpv} of long virtual knots.   

Note that our sign notation  is slightly different from them.  
Using our notation, the Gauss diagram formulas $\langle f_{\text{CDM}} , \cdot \rangle$ of Chmutov-Duzhin-Mostovoy \cite{CDM} and $\langle f_{\text{ZHLN}} , \cdot \rangle$ of  
 and Zohar-Hass-Linial-Nowik \cite{EHLN} are given by 

$f_{\text{CDM}}$ $=$ 
 \input{./399_1sign}
 and 
$f_{\text{ZHLN}}$ $=$
\input{./GPV27sign}
respectively.  Then 
\begin{equation}\label{EqCDM}
f_{\text{CDM}} = - \tilde{f}_{3, 2} - \tilde{f}_{3, 3} + \tilde{f}_{3, 4} +\tilde{f}_{2, 1} +2\begin{tikzpicture}[baseline=0pt]
\draw (0pt, 0pt) circle (7pt);
\draw(7pt, 0pt)--(-7pt, 0pt);
\fill (-4pt,1.2pt)--(-7pt,0pt)--(-4pt,-1.2pt)--cycle;
\draw(0pt, -7pt)--(0pt, 7pt);
\fill (1.2pt,4pt)--(0pt,7pt)--(-1.2pt,4pt)--cycle;
\fill (4.94975pt, -4.94975pt) circle (1pt);
\draw[font=\tiny] (-10.5pt, 0pt) node {\mbox{$-$}};
\draw[font=\tiny] (0pt, 10.5pt) node {\mbox{$+$}};
\end{tikzpicture}
\end{equation}
and 
$f_{\text{ZHLN}}$ $=$ $- \tilde{f}_{3, 2} - \tilde{f}_{3, 3} + \tilde{f}_{3, 4} + \tilde{f}_{2, 2}$.  

Firstly, the formula $f_{\text{ZHLN}}$ of Zohar-Hass-Linial-Nowik \cite{EHLN} gives an invariant.  Secondly, the formula $f_{\text{CDM}}$ of  Chmutov-Duzhin-Mostovoy \cite{CDM} does  \emph{not} give an invariant and still includes a typo.  
 
Assume that $\langle f_{\text{CDM}}, \cdot \rangle$ must be an invariant.  This assumption implies that $\langle , \cdot \rangle$ should be an invariant.  However, by the example of Fig.~\ref{counterEg}, it is clear that $\langle , \cdot \rangle$ is not invariant, which implies a contradiction.  
We revise it and give a correct one   
\[
\tilde{f}_{\text{CDM}} = - \tilde{f}_{3, 2} - \tilde{f}_{3, 3} + \tilde{f}_{3, 4} +\tilde{f}_{2, 1}
\]
that is an invariant of degree three.  
}}
\section*{{\color{black}{Acknowledgements}}}
The authors NI and YK gratefully acknowledge support from the Simons Center for Geometry and Physics, Stony Brook University at which some of the research for this paper was performed.   
{\color{black}{YK}} is supported by RIKEN iTHEMS Program and RIKEN AIP (Center for Advanced Intelligence Project).  
{\color{black}{The work of NI is partially supported by JSPS  KAKENHI Grant Number JP20K03604.}}   
{\color{black}{The work of YK}} is partially supported by Grant-in-Aid for Young Scientists (B) (No.~16K17586) and Grant-in-Aid for Early-Career Scientists  (No.~20K14322), Japan Society for the Promotion of Science.

\end{document}